\newtheorem{thm}{Theorem}[section]
\newtheorem{lemma}[thm]{Lemma}
\newtheorem{proposition}[thm]{Proposition}
\theoremstyle{definition}
\newtheorem{definition}[thm]{Definition}
\newtheorem{example}[thm]{Example}
\newtheorem{remark}[thm]{Remark}
\numberwithin{equation}{section}
\newcommand{\cA}{\mathcal{A}}
\newcommand{\cC}{\mathcal{C}}
\newcommand{\cD}{\mathcal{D}}
\newcommand{\cK}{\mathcal{K}}
\newcommand{\cO}{\mathcal{O}}
\newcommand{\cE}{\mathcal{E}}
\newcommand{\cF}{\mathcal{F}}
\newcommand{\cG}{\mathcal{G}}
\newcommand{\cM}{\mathcal{M}}
\newcommand{\cT}{\mathcal{T}}
\newcommand{\cP}{\mathcal{P}}
\newcommand{\cQ}{\mathcal{Q}}
\newcommand{\cS}{\mathcal{S}}
\newcommand{\cU}{\mathcal{U}}
\newcommand{\cW}{\mathcal{W}}
\newcommand{\hir}{\Sigma_e}
\newcommand{\rdim}{{\rm dim}}
\newcommand{\rgr}{{\rm gr}}
\newcommand{\rid}{{\rm id}}
\newcommand{\rper}{{\rm per}}
\newcommand{\bZ}{\mathbb{Z}}
\newcommand{\bC}{\mathbb{C}}
\newcommand{\bQ}{\mathbb{Q}}
\newcommand{\bR}{\mathbb{R}}
\newcommand{\bP}{\mathbb{P}}
\newcommand{\bH}{\mathbb{H}}
\newcommand{\bfR}{\textbf{R}}
\newcommand{\bfL}{\textbf{L}}
\newcommand{\rhom}{\textrm{Hom}}
\newcommand{\rch}{\textrm{ch}}
\newcommand{\rtd}{\textrm{td}}
\newcommand{\rexp}{\textrm{exp}}
\newcommand{\rdet}{\textrm{det}}
\newcommand{\rdeg}{\textrm{deg}}
\newcommand{\rrank}{\textrm{rank}}
\newcommand{\rStab}{\textrm{Stab}}
\newcommand{\rSpec}{\textrm{Spec}}
\newcommand{\rExt}{\textrm{Ext}}
\newcommand{\rCoh}{{\rm Coh}}
\newcommand{\ramp}{\textrm{Amp}}
\newcommand{\rns}{\textrm{NS}}
\newcommand{\rsup}{\textrm{sup}}
\newcommand{\rRe}{\textrm{Re}}
\newcommand{\rIm}{\textrm{Im}}
\newcommand{\mnH}{\mathnormal{H}}
\begin{document}
\setcounter{page}{1}

\noindent

\centerline{}
\centerline{}

\title{Wall in the stability space of the gluing stability conditions on Hirzebruch surfaces}
 \author{Yusuke Ohmiya}

\begin{abstract}
This paper investigates the wall structure of the space of stability conditions on Hirzebruch surfaces. Using the gluing construction of \cite{CP} and \cite{Uch} with respect to a fixed semiorthogonal decomposition, we focus on two main objectives: observing the intersection of the geometric chamber $U(S) \subset \rStab(\Sigma_e)$ with the walls of the resulting subspace, and determining the moduli space of $\sigma$-semistable objects on this subspace.
\end{abstract} 
\maketitle

\section{Introduction}
\subsection{Bridgeland's works}
The study of moduli spaces is primarily motivated by classification problems, which are fundamental to many areas of algebraic geometry. To address these problems effectively, the notion of stability is one of the core elements of this theory.

The notion of stability conditions on triangulated categories was introduced by Bridgeland in \cite{Br07}. A stability condition $(Z,\cA)$ consists of a homomorphism $Z : K(\cD) \to \bC$ which respect to the Harder-Narasimhan condition, and an abelian category $\cA$, called a heart of the bounded t-structure. One of the main result of \cite{Br07} is set of stability conditions $\rStab(\cD)$ has the structure of complex manifold. 

It is often described as a generalization of Mumford's slope stability on curves, extending this notion to the derived category of coherent sheaves on a smooth projective variety. Crucially, in the case of surfaces, Bridgeland's stability is known to encompass Gieseker stability. This identification provides a powerful tool, enabling the study of the geometry of Gieseker moduli spaces, by analyzing the wall-crossing phenomena that occur as the stability condition is varied within the manifold $\mathrm{Stab}(S)$ (\cite{Bay09}, \cite{BM14}, \cite{Maci14}). In this identification, the stability condition, known as the divisorial stability condition plays a central role (see Section 4).

\subsection{Geometric chambers and walls}
The geometric chamber, denoted $U(S)$, is the set of stability conditions under which skyscraper sheaves are stable of same phase. For a smooth projective surface defined over $\bC$, it is known to be an open subset of the stability manifold (\cite{MS16}). When an object crosses the wall $\partial U(S) = \overline{U(S)} \setminus U(S)$, it loses stability. Furthermore, it is known that $U(S)$ contains the set of divisorial stability conditions. Therefore, the study of $\partial U(S)$ is essential for the analysis of wall-crossing of moduli spaces.

This approach was investigated for K3 surfaces in \cite{Br08}, and also for birational morphisms $f: X \to Y$ between smooth projective surfaces in \cite{toda2012stabilityconditionsextremalcontractions}, \cite{Tod14}. In \cite{Tod14}, the author shows that there is a correspondence between wall-crossing and the minimal model program. He proves that contractions of curves of self-intersection $-1$ can be realized as wall-crossing in $\rStab(X)$. That is, if $f : X \to Y$ is a birational map contracting a $-1$ curve on $X$, then there is a wall of the geometric chamber such that, after crossing,$M_\sigma([\cO_x]) \cong Y$. In recent years, the general case has been studied in \cite{tramel2018bridgelandstabilityconditionssurfaces}.

\subsection{Gluing construction and perversity}
Research has also proceeded from a different perspective. In \cite{CP}, gluing construction for stability conditions was introduced. The idea is to take a semi-orthogonal decomposition of a triangulated category $\cD = \langle \cD_1,\cD_2 \rangle$, and then combine the stability conditions from each $\cD_i$ to create a new one. Subsequently, \cite{Uch} investigated wall-crossing phenomena on a ruled surface $p : S \to C$ with base curve genus $g(C)>0$ by using this construction, while fixing the Orlov's semiorthogonal decomposition 
$$
    D^b(S) = \langle \cD_1=\bfL p^*D^b(C) \otimes \cO_S(-C_0), \cD_2 = \bfL p^*D^b(C) \rangle.
$$ 
When $\sigma \in \rStab(S)$ is the gluing stability condition of $\sigma_1 \in \rStab(\cD_1)$ and $\sigma_2 \in \rStab(\cD_2)$, the author defined the {\it gluing perversity} $\rper(\sigma)$, as the "phase discrepancy" between $\sigma_1$ and $\sigma_2$ (\cite{Uch} Definition 3.5). Furthermore, it was shown that continuously varying $\rper(\sigma)$ leads to wall-crossings of the $\sigma$-stable objects.

However, this research was restricted to the case where $g(C) > 0$. The $g(C)=0$ case is distinct, because the derived category of coherent sheaves on $C$ possesses a heart of the bounded t-structure $\langle \mathcal{O}(k-1)[j+1],\mathcal{O}(k)[j] \rangle$ arising from Beilinson's exceptional collection (\cite{Bei79}). Consequently, in addition to the standard stability condition \eqref{standardstabilitycondition}, there exists another stability condition derived from this collection (often called a quiver stability condition).

In this paper, we investigate the wall-crossing phenomena induced by the variation of gluing perversity in the $g(C)=0$ case; that is, for a Hirzebruch surface $p : \Sigma_e \to \bP^1$.

The choice between the standard stability condition \eqref{standardstabilitycondition} and this quiver stability condition \eqref{algebraicstabilitycondition} on each component $\mathcal{D}_i$ of the semiorthogonal decomposition naturally leads to the four distinct "glued types" (Case $m=1, 2, 3, 4$) that are investigated in our main result (Definition \ref{typeofgluingstab}, Theorem 1.1).

Furthermore, we investigate the location of this wall (defined by $\rper(\sigma)=0$) within the stability manifold, specifically by calculating its intersection with the boundary of the space of divisorial stability conditions $S_{div} \subset Stab(\Sigma_e)$ (see Section 4).

\subsection{Main Result}
The main results of this paper can be divided into the following two parts.
\subsubsection{Moduli space of $\sigma$-stable objects}
Here we fix $p : \Sigma_e \to \bP^1$ be a Hirzebruch surface defined over $\bC$ with degree $e$, and $D^b(\Sigma_e)$ be its bounded derived category of coherent sheaves. For a gluing stability condition $\sigma = \sigma_{gl} \in \rStab(\Sigma_e)$, we denote $M^{\sigma ss}([\cO_x])$ the set of $\sigma$-semistable objects with the same Chern character as the skyscraper sheaf $\cO_x$. 
\begin{thm}[\bf Theorem \ref{Mainthm1}] \label{mainthm}
    Let $\sigma = \sigma_{gl,m}$ be a gluing stability condition of glued type $m$ (Definition \ref{typeofgluingstab}). Then, we have the following:

    \vspace{3mm}
    {\bf (Case $m=1$)}
    \begin{enumerate}
        \item If $\rper(\sigma)=0$ and $\rper_i(\sigma) \neq 0$ for $i=1,2$, then $\bP^1$ is the coarse moduli space of S-equivalence classes of objects in $M^{\sigma ss}([\cO_x])$.
        \item If $\rper(\sigma)=0$ and $\rper_i(\sigma)= 0$ for $i=1,2$, the moduli space of S-equivalence classes is isomorphic to $\rSpec \ \bC$.
        \item If ${\rm per}(\sigma) \neq 0$ then the moduli space of $\sigma$-semistable objects in $M^{\sigma ss}([\cO_x])$ is empty.
    \end{enumerate}

    \vspace{3mm}
    {\bf (Case $m=2,3)$}
    \begin{enumerate}
        \item If ${\rm per}(\sigma) > 0$, then $\hir$ is the fine moduli space of $\sigma$-stable objects in $M^{\sigma ss}([\cO_x])$.
        \item If $\rper(\sigma)=0$ and $\rper_i(\sigma) \neq 0$ for $i=1,2$, then $\bP^1$ is the coarse moduli space of S-equivalence classes of objects in $M^{\sigma ss}([\cO_x])$.
        \item If $\rper(\sigma)=0$ and $\rper_i(\sigma)= 0$ for $i=1,2$, the moduli space of S-equivalence classes is isomorphic to $\rSpec \ \bC$.
        \item If $\rper(\sigma) < 0$, then the moduli space of $\sigma$-semistable objects in $M^{\sigma ss}([\cO_x])$ is empty.
    \end{enumerate}

    \vspace{3mm}
    {\bf (Case $m=4$)}    
    $\\$ \hspace{5mm} In this case, which necessarily implies $\rper(\sigma)=0$ and $per_i(\sigma)\ne 0$ (see Lemma \ref{phaseoflambda1Oxandrho2Ox}, Lemma \ref{lemmasetofMsigma}), $\bP^1$ is the coarse moduli space of S-equivalence classes of objects in $M^{\sigma ss}([\cO_x])$.
\end{thm}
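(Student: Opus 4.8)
All four glued types are treated through the semiorthogonal filtration of the skyscraper sheaf. Write $i_1(-)=\bfL p^{*}(-)\otimes\cO_{\Sigma_e}(-C_0)$ and $i_2(-)=\bfL p^{*}(-)$ for the embeddings attached to $D^{b}(\Sigma_e)=\langle\cD_1,\cD_2\rangle$. Pushing the Euler sequence of the fibre $F=p^{-1}(p(x))$ forward to $\Sigma_e$ and using $\cO_{\Sigma_e}(-C_0)|_{F}=\cO_{F}(-1)$ produces the short exact sequence $0\to i_1(\cO_{p(x)})\to i_2(\cO_{p(x)})\to\cO_x\to 0$, which rotates to the canonical triangle of $\cO_x$ for $\langle\cD_1,\cD_2\rangle$, with $\cD_2$-part $\rho_2(\cO_x)=i_2(\cO_{p(x)})$ and $\cD_1$-part $\lambda_1(\cO_x)=i_1(\cO_{p(x)})[1]$; inside the glued heart $\cA$ this becomes the exact sequence $0\to\rho_2(\cO_x)\to\cO_x\to\lambda_1(\cO_x)\to 0$. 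Since $K(D^{b}(\Sigma_e))=K(\cD_1)\oplus K(\cD_2)$, every $E$ with $\rch(E)=\rch(\cO_x)$ has $\cD_2$-subobject of class $[\rho_2(\cO_x)]$ and $\cD_1$-quotient of class $[\lambda_1(\cO_x)]$, which on $\bP^{1}$ are $\pm[\cO_{\mathrm{pt}}]$; hence the two SOD pieces of such $E$ are a point sheaf and a shifted point sheaf, or---if the relevant $\sigma_i$ is a quiver stability condition---an object of that class in the Beilinson heart $\langle\cO(k-1)[j+1],\cO(k)[j]\rangle$. Thus $M^{\sigma ss}([\cO_x])$ is controlled by (i) the extensions between $\rho_2(\cO_x)$ and $\lambda_1(\cO_x)$ on $\Sigma_e$ and (ii) the $\sigma_i$-(semi)stability of each piece inside $\cD_i\simeq D^{b}(\bP^{1})$.

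\textbf{Phase bookkeeping.} Using Lemma \ref{phaseoflambda1Oxandrho2Ox} I would compute the $\sigma_1$-phase $\phi_1$ of $\lambda_1(\cO_x)$ and the $\sigma_2$-phase $\phi_2$ of $\rho_2(\cO_x)$, so that $\rper(\sigma)$ is the normalised difference $\phi_1-\phi_2$, while $\rper_i(\sigma)$ records whether the $i$-th piece is $\sigma_i$-stable ($\rper_i(\sigma)\neq0$) or strictly $\sigma_i$-semistable and split into its exceptional Jordan--Hölder atoms ($\rper_i(\sigma)=0$). Three regimes occur. If $\cO_x$ is not $\sigma$-semistable---by Lemma \ref{lemmasetofMsigma} this is the situation whenever $\rper(\sigma)\neq0$ in Case $m=1$, and whenever $\rper(\sigma)<0$ in Cases $m=2,3$---then the forced position of the SOD pieces supplies, for \emph{every} $E$ with $\rch(E)=\rch(\cO_x)$, a destabilising subobject or quotient of fixed phase, so $M^{\sigma ss}([\cO_x])=\emptyset$. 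If $\rper(\sigma)>0$ in Cases $m=2,3$, then $\phi_2<\phi_1$, the only nonzero proper $\cA$-subobject of $\cO_x$ is $\rho_2(\cO_x)$, of strictly smaller phase, and a bookkeeping of subobjects---using $\rExt^{\bullet}_{\Sigma_e}(\rho_2(\cO_x),\lambda_1(\cO_x))=0$, a degree shift of $\rhom(\cD_2,\cD_1)=0$---shows that $\cO_x$, and in fact every $\sigma$-semistable object of its class, is $\sigma$-stable. If $\rper(\sigma)=0$ (automatic in Case $m=4$, the wall case of Cases $m=1,2,3$), then $\phi_1=\phi_2$ and $\cO_x$ is strictly $\sigma$-semistable with Jordan--Hölder filtration $0\subset\rho_2(\cO_x)\subset\cO_x$, refined into exceptional atoms when $\rper_i(\sigma)=0$.

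\textbf{Identifying the moduli spaces.} In the stable regime ($m=2,3$, $\rper(\sigma)>0$) the non-split extensions of $\lambda_1(\cO_{x'})$ by $\rho_2(\cO_{x'})$ are parametrised by $\bP(\rExt^{1}_{\Sigma_e}(\lambda_1(\cO_{x'}),\rho_2(\cO_{x'})))$, which by adjunction and the projection formula is a $\bP^{1}$, canonically the fibre $F_{p(x')}$; these extensions are exactly the $\cO_{x'}$, they reassemble over $\bP^{1}$ into $\hir$, and the structure sheaf $\cO_{\Delta}$ of the diagonal is a family with the universal property, so $\hir$ is the fine moduli space. When $\rper(\sigma)=0$ and $\rper_i(\sigma)\neq0$ (Case $m=1$(1), Cases $m=2,3$(2), Case $m=4$) the same extension spaces parametrise the $\sigma$-semistable objects, but each is now strictly semistable with S-equivalence class $[\lambda_1(\cO_{x'})\oplus\rho_2(\cO_{x'})]$ depending only on $p(x')\in\bP^{1}$; S-equivalence therefore collapses each fibre $F_{p(x')}$, and the coarse moduli space of S-equivalence classes is $\bP^{1}$. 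When moreover $\rper_i(\sigma)=0$ (Case $m=1$(2), Cases $m=2,3$(3)), splitting all exceptional atoms shows every $\sigma$-semistable object of the class is S-equivalent to one fixed polystable object, so the coarse moduli space is $\rSpec\,\bC$.

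\textbf{Main obstacle.} The heart of the argument is the completeness statement behind Lemma \ref{lemmasetofMsigma}: one must rule out ``unexpected'' $\sigma$-semistable objects of class $\rch(\cO_x)$, in particular objects whose two SOD pieces are supported over distinct points of $\bP^{1}$, or are built from a non-semistable object of the correct $K$-class inside some $\cD_i$, so that on the wall only the extensions over a single fibre and their polystable degenerations survive. This forces a combined use of the glued $t$-structure, the Harder--Narasimhan filtrations in $\cD_1,\cD_2$, and the perversity estimates, and it is what accounts for the asymmetry between Case $m=1$ (empty for both signs of $\rper(\sigma)$) and Cases $m=2,3$ (a chamber of $\sigma$-stable objects on one side). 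A second, more technical point is the fine moduli claim in Cases $m=2,3$: verifying the universal property of $\cO_{\Delta}$ rather than a bijection on closed points needs boundedness of the moduli stack together with the vanishing of $\rhom^{\le0}_{\Sigma_e}(\cO_x,\cO_{x'})$ for $x\neq x'$.
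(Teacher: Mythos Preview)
Your strategy is essentially the same as the paper's: both arguments rest on the semiorthogonal triangle $\rho_2(\cO_x)\to\cO_x\to\lambda_1(\cO_x)$, the phase computations of Lemma~\ref{phaseoflambda1Oxandrho2Ox}, and the classification of $M^{\sigma ss}([\cO_x])$ (your ``main obstacle'' is exactly Lemma~\ref{lemmasetofMsigma}, with S-equivalence handled by Lemma~\ref{lemma.Sequivifandonlyifsamefiber}). So the set-theoretic backbone of your proof matches the paper.

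Where you diverge is in the moduli-theoretic step. For $\rper(\sigma)>0$ you propose to realise $\Sigma_e$ as a $\bP^1$-bundle of extension classes $\bP(\rExt^1(\lambda_1(\cO_{x'}),\rho_2(\cO_{x'})))$ over $\bP^1$ and to check the universal property of $\cO_\Delta$ by hand. The paper instead invokes Inaba's algebraic space $\cM$ of simple complexes (Theorem~\ref{thm.Inaba.theorem0.2}) and, following Toda, shows that the tautological map $\eta:\Sigma_e\to\cM^\sigma([\cO_x])$ is \'etale (via $T_x\Sigma_e\cong\rExt^1(\cO_x,\cO_x)$), a proper monomorphism, and bijective on points---hence an isomorphism. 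This sidesteps the boundedness and base-change verifications you flag as a ``second, more technical point.'' For the wall case $\rper(\sigma)=0$, $\rper_i(\sigma)\neq0$, you argue informally that S-equivalence collapses each fibre; the paper makes this precise by exhibiting the corepresenting morphism: for a family $\cE$ over $T$ one takes $\bfR(p\times\rid_T)_*\cE$, which is a flat family of length-one skyscraper sheaves on $\bP^1$, and then checks the universal factorisation through $\underline{\bP^1}$ using a section $s:\bP^1\to\Sigma_e$.

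One small gap in your sketch: in the stable regime you assert that ``the only nonzero proper $\cA$-subobject of $\cO_x$ is $\rho_2(\cO_x)$.'' This is not automatic---a priori a simple subobject $F\subset\cO_x$ need not sit inside $\rho_2(\cO_x)$. The paper's Proposition~\ref{PropHN-filtofAgl} handles this by a dichotomy on the composite $F\hookrightarrow\cO_x\twoheadrightarrow\lambda_1(\cO_x)$: if it vanishes then $F\subset\cO_f$ and has strictly smaller phase; if not, simplicity of $F$ forces $F\cong\lambda_1(\cO_x)$, which has strictly larger phase. Your vanishing $\rExt^\bullet(\rho_2(\cO_x),\lambda_1(\cO_x))=0$ is not by itself enough to conclude this.
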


\subsubsection{Intersection of geometric chamber with wall}
In Section 4, we investigate the position of the wall $\mathcal{W}_{0}$ within the stability manifold by analyzing its relation to the space of divisorial stability conditions $S_{div}$. The subspace $S_{div}$ plays a central role in the theory, as it connects Bridgeland stability with classical notions such as Gieseker stability. The wall $\mathcal{W}_{0}$, defined by the vanishing of the gluing perversity ($\rper(\sigma)=0$), corresponds to the locus where the stability of skyscraper sheaves is critical (see Section 3.3). Determining the intersection of $\mathcal{W}_{0}$ with the boundary of $S_{div}$ provides an explicit geometric description of this wall within the stability manifold. 

Recall that the closure $\overline{S_{div}}$ in $Stab(\Sigma_{e})$ is described as a cone $\{ (x,y,z,w) \in \bR^4 \mid z > 0, w > ze\}$ with two boundary components: $\partial_{z}$ ($z=0$) and $\partial_{w}$ ($w=ze$). \eqref{boundaryofwall}  Our result completely determines the location of $\mathcal{W}_{0}$ relative to these boundaries. 
\begin{thm}[Theorem \ref{thm.boundaryofW01}, Theorem \ref{thm.boundaryofW02}, Theorem \ref{thm.m=3}]
    Let $\mathcal{W}_{0,m}$ be the set of gluing stability conditions of glued type $m$ with zero gluing perversity. (see Definition \ref{localwall}) The intersection $\overline{S_{div}} \cap \mathcal{W}_{0,m}$ is described as follows: 

    \begin{itemize}
        \item (Case $m=1, 2$) The wall $\mathcal{W}_{0,m}$ is entirely contained in the boundary component $\partial_{z}$. It does not intersect the vertex $\partial_z \cap \partial_w$ of $\overline{S_{div}}$.
        \item (Case $m=3$) The wall $\mathcal{W}_{0,3}$ is also contained in the boundary component $\partial_{z}$. In particular, it intersects the vertex of $\overline{S_{div}}$ if and only if the parameters of the associated quiver stability conditions satisfy the determinantal condition:
        $$
            \det\begin{pmatrix} \mathrm{Re}\zeta_{0} & \mathrm{Re}\zeta_{1} \\ \mathrm{Im}\zeta_{0} & \mathrm{Im}\zeta_{1} \end{pmatrix} - \det\begin{pmatrix} \mathrm{Re}(\zeta_{0}+\zeta_{1}) & \mathrm{Re}\zeta_{0}' \\ \mathrm{Im}(\zeta_{0}+\zeta_{1}) & \mathrm{Im}\zeta_{0}' \end{pmatrix} = 0.
        $$
    \end{itemize}
\end{thm}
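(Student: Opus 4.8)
\emph{Proof strategy.} The plan is to reduce, by a soft argument, to the boundary of $\overline{S_{div}}$ and then to make the gluing-perversity equation explicit on each of its two faces. First, I would use $S_{div}\subseteq U(\Sigma_{e})$ (\cite{MS16}), so that every divisorial stability condition makes all skyscrapers $\cO_{x}$ stable of the same phase, whereas by Theorem \ref{Mainthm1} and the description of $\mathcal{W}_{0}$ in Section 3.3 the object $\cO_{x}$ is only strictly semistable along $\mathcal{W}_{0,m}$. Hence $\mathcal{W}_{0,m}\cap U(\Sigma_{e})=\emptyset$, so $\mathcal{W}_{0,m}\cap S_{div}=\emptyset$, and since $\overline{S_{div}}\setminus S_{div}=\partial_{z}\cup\partial_{w}$ (the two faces of the cone) we get
\[
    \overline{S_{div}}\cap\mathcal{W}_{0,m}\ \subseteq\ \partial_{z}\cup\partial_{w},
\]
with vertex $\partial_{z}\cap\partial_{w}=\{z=w=0\}$. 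I would then recall from \eqref{boundaryofwall} that $\partial_{z}$ ($z=0$) is the face on which the polarisation degenerates to a multiple of the fibre class $f$, whose contraction is the ruling $p:\Sigma_{e}\to\bP^{1}$, while $\partial_{w}$ ($w=ze$) is the face on which it degenerates to a multiple of $C_{0}+ef$, whose contraction collapses the $(-e)$-curve $C_{0}$; at the vertex the polarisation vanishes.

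Next I would show the intersection lies in $\partial_{z}$. The gluing is done with respect to Orlov's semiorthogonal decomposition attached to $p$, with components $\bfL p^{*}D^{b}(\bP^{1})\otimes\cO(-C_{0})$ and $\bfL p^{*}D^{b}(\bP^{1})$ --- objects pulled back along $p$, up to a twist by $\cO(-C_{0})$. The idea is to identify the limiting heart along $\partial_{z}$ with a heart glued from $D^{b}(\bP^{1})$ along this decomposition, and the limiting heart along $\partial_{w}$ with one glued along the semiorthogonal decomposition of the other contraction; it then follows that a boundary point of $\overline{S_{div}}$ carrying a gluing stability condition of type $m=1,2,3$ must sit on $\partial_{z}$, so $\overline{S_{div}}\cap\mathcal{W}_{0,m}\subseteq\partial_{z}$. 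Conversely, the gluing stability conditions produced in the proof of Theorem \ref{Mainthm1} witness $\partial_{z}\cap\mathcal{W}_{0,m}\neq\emptyset$, so the wall genuinely lies in this stratum.

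It then remains to decide whether the vertex lies on $\mathcal{W}_{0,m}$. I would work with the two semiorthogonal components $\lambda_{1}\cO_{x}\in\cD_{1}$ and $\rho_{2}\cO_{x}\in\cD_{2}$ of $\cO_{x}$ (Lemma \ref{phaseoflambda1Oxandrho2Ox}); from the Beilinson resolution of $\cO_{x}$ along a fibre, both become, under the equivalences $\cD_{i}\cong D^{b}(\bP^{1})$, (shifts of) skyscrapers on $\bP^{1}$. The equation $\rper(\sigma)=0$ says that the $\sigma_{1}$-phase of $\lambda_{1}\cO_{x}$ equals the $\sigma_{2}$-phase of $\rho_{2}\cO_{x}$ modulo the gluing offset, and I would evaluate this relation as $(\beta,\omega)$ tends to the vertex, i.e. as $\omega\to0$. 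For $m=1,2$ the particular arrangement of standard and quiver stability conditions on the two components makes this relation satisfiable only for $w>0$, which excludes the vertex. For $m=3$ the component carrying the quiver stability condition has free parameters $\zeta_{0},\zeta_{1}$, with companion datum $\zeta_{0}'$ on the other component, and in the limit $\omega\to0$ the obstruction collapses exactly to the vanishing of the difference of the two $2\times2$ minors, which is the determinantal identity in the statement.

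The hard part will be the two explicit analyses just described: identifying the limiting hearts along the two extremal contractions, and --- more seriously --- computing the $\sigma_{i}$-phases of $\lambda_{1}\cO_{x}$ and $\rho_{2}\cO_{x}$ for the glued stability condition as $(\beta,\omega)$ approaches each boundary stratum, fixing the correct normalisation of the gluing offset in $\rper$, and carrying out the genuinely degenerate limit $\omega\to0$. In case $m=3$ the quiver parameters $\zeta_{0},\zeta_{1},\zeta_{0}'$ are independent of the geometric coordinates, so the determinantal condition is a genuine compatibility between the free quiver data and the constrained divisorial data; pinning down which $2\times2$ minors occur requires carefully tracking the images of $\cO_{\bP^{1}}$ and $\cO_{\bP^{1}}(1)$ through $\bfL p^{*}(-)$ and $\bfL p^{*}(-)\otimes\cO(-C_{0})$, and through the resolution of $\cO_{x}$ restricted to a fibre.
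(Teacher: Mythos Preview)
Your opening reduction to the boundary via $S_{div}\subset U(\Sigma_e)$ is correct and economical, but it is not how the paper proceeds, and the remainder of your strategy diverges substantially from the paper's and contains real gaps.

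The paper's argument is a direct computation, not a limiting or heart-identification argument. For each glued type $m$ it writes the central charge as $Z_{gl,m}=\langle\pi(\sigma),\rch(E)\rangle_M$ and solves explicitly for the components $(\xi_0,\xi_1,\xi_2,\xi_3)$ of $\pi(\sigma)$ in terms of the quiver parameters $\zeta_0,\zeta_1,\zeta_0',\zeta_1'$ (Propositions \ref{cofficientsofpisigma1}, \ref{cofficientsofpisigma2}, \ref{cofficientsofpisigma3}). It then uses the relation $\pi(\sigma)M=e^{B+i\omega}$ to extract $z=t\cdot\rdet(\xi_{0,1})$ and $w=t\cdot\rdet(\xi_{0,2})$ (equations \eqref{eq.z}, \eqref{eq.w}). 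The containment in $\partial_z$ is the vanishing $\rdet(\xi_{0,1})=0$, and the vertex question is whether $\rdet(\xi_{0,2})=0$. For $m=1,2$ the key input is Lemma \ref{lem.phaseofzeta0zeta1}: when one component is standard, $\rper(\sigma)=0$ forces $\zeta_0,\zeta_1\in\bR_{<0}$, whence $\xi_0,\xi_1$ are both real and $\rdet(\xi_{0,1})=0$, while $\rIm\xi_2=1$ gives $\rdet(\xi_{0,2})\neq0$. For $m=3$, $\rper(\sigma)=0$ gives $\zeta_0'+\zeta_1'=c(\zeta_0+\zeta_1)$ for some $c>0$, so $\xi_0,\xi_1$ are proportional; expanding $\rdet(\xi_{0,2})$ by bilinearity yields exactly the stated determinantal condition.

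Your plan to distinguish $\partial_z$ from $\partial_w$ by ``identifying the limiting heart along each face with a heart glued along the corresponding semiorthogonal decomposition'' is the main gap. This identification is not established anywhere in the paper and is not obvious: you would need to prove that as $z\to0$ the tilted heart $\cA_{\omega,B}$ actually agrees with some glued heart $\cA_{gl,m}$, which requires a delicate comparison of torsion pairs that you have not carried out. Similarly, your vertex analysis via ``taking the limit $\omega\to0$ of the phase equation'' is a heuristic: the gluing parameters $\zeta_i,\zeta_i'$ are not functions of $(\beta,\omega)$ at all, so there is no limit to take---the determinantal condition is an algebraic identity among the $\xi_i$, not the residue of an asymptotic expansion. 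In particular, for $m=1,2$ you miss the crucial algebraic fact (Lemma \ref{lem.phaseofzeta0zeta1}) that the standard component pins the phase to $1$ and thereby forces the quiver parameters onto the negative real axis; this is what drives both $z=0$ and $w\neq0$ simultaneously, and it has no analogue in your limiting picture.
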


\vspace{3mm}
\subsection{Outline of this paper}
In Section 2, we review the definition of stability conditions and related foundational results. In Section 3, we construct the gluing stability condition on Hirzebruch surfaces and define the associated gluing perversity. In Section 4, we investigate the intersection of the space of divisorial stability conditions with the wall defined by these gluing stability conditions. Finally, in Section 5, we provide the proof of our main result, Theorem \ref{mainthm}.

\subsection{Notation and Conventions}
Throughout this paper, all varieties are defined over $\mathbb{C}$. For a triangulated (or abelian) category $\cD$ and a subcategory $\cC \subset \cD$, the smallest extension subcategory of $\cD$ containing the objects of $\cC$ is denoted by $\langle \cC \rangle$.
For a point $x \in X$ on a variety $X$, the skyscraper sheaf at $x$ is denoted by $\cO_x$. For a projective line $\mathbb{P}^1$, $\cO$ denotes the structure sheaf of $\mathbb{P}^1$. Similarly, for a surface S, $\cO_{S}$ denotes the structure sheaf of $S$.

\subsection{Acknowledgement}
The author would like to express his gratitude to Takayuki Uchiba for his valuable advice and numerous enlightening discussions. The author also thanks Yuki Matsubara for his extensive discussions and many helpful suggestions. The author is grateful to Tomohiro Karube for his precise remarks regarding the support property. Thanks are also due to Franco Rota for reading the manuscript and providing useful comments. 

\section{Preliminaries}
\subsection{Stability conditions}
Bridgeland introduced the notion of stability condition on triangulated categories in \cite{Br07}, the contents of which we briefly summarize in this section. Throughout the following discussion, we assume that all categories are small. For all triangulated categories and for any objects $A,B \in \cD$, the Hom group $\rhom_{\cD}(A,B)$ is a finite-dimensional vector space over a fixed field $k$.

We follow \cite{Br07} in defining a stability condition using the concept of a heart of a bounded t-structure.

\begin{definition}
    A t-structure of triangulated category $\cD$ is a pair of full subcategories $(\cD^{\leq 0},\cD^{\geq 0})$ that satisfies the following conditions:
    \begin{enumerate}
        \item $\rhom_\cD(\cD^{\leq 0}, \cD^{\geq 0}[-1]) = 0$, 
        \item $\cD^{\leq 0}[1] \subset \cD^{\leq 0}$ and $\cD^{\geq 0}[-1] \subset \cD^{\geq 0}$,
        \item For any object $E \in \cD$, there exists a distinguished triangle
                $$
                    E_{\leq 0} \to E \to E_{\geq 0} \to E_{\leq 0}[1]
                $$
              where $E_{\leq 0} \in \cD^{\leq 0}$ and $E_{\geq 0} \in \cD^{\geq 0}$.
    \end{enumerate}
\end{definition}

A t-structure $(\cD^{\leq 0},\cD^{\geq 0})$ is called {\it bounded} if it satisfies:
$$
    \cD = \bigcup_{i \in \bZ} \cD^{\leq 0}[i] = \bigcup_{i \in \bZ} \cD^{\geq 0}[-i].
$$
\begin{definition}
    Let $(\cD^{\leq 0},\cD^{\geq 0})$ be a bounded t-structure on a triangulated category $\cD$. The {\it heart} of this t-structure is defined as
    $$
        \cA = \cD^{\leq 0} \cap \cD^{\geq 0}.
    $$
\end{definition}

Next, we define a stability function on an abelian category. Let $\cA$ be an abelian category, and let $K(\cA)$ denote its Grothendieck group. Consider a group homomorphism $Z : K(\cA) \to \bC$. The {\it phase} of a non-zero object $E \in \cA$ is defined as 
$$
    \phi(E) = \frac{1}{\pi} arg Z(E) \in (0,1].
$$
An object $E \in \cA$ is called {\it semistable} if for every non-zero proper subobject $0 \neq F \subsetneq E$, the phases satisfy the inequality $\phi(F) \leq \phi(E)$.

\begin{definition}
    A group homomorphism $Z : K(\cA) \to \bC$ is called a {\it stability \ function} on $\cA$ if the following conditions are satisfied.
    \begin{enumerate}
        \item For every non-zero object $E \in \cA$, $Z(E) \in \bH$, where $\bH := \{ r \cdot \rexp(i\pi\phi) \mid r \in \bR_{>0}, \ 0 < \phi \leq 1\}$.
        \item $Z$ satisfies the {\it Harder-Narasimhan property}. Namely, any object $E \in \cA$ has a filtration
                $$
                    0 = E_0 \subset E_1 \subset \cdots \subset E_n = E
                $$
                such that the factors $F_i = E_i / E_{i-1}$ are the semistable objects in $\cA$ with
                $$
                    \phi(F_1) > \cdots > \phi(F_n).
                $$
    \end{enumerate}
\end{definition}

\begin{definition}
     \begin{enumerate}
         \item A pair $(Z,\cA)$ is called a {\it pre-stability condition} if $\cA$ is the heart of a bounded t-structure and $Z : K(\cA) \to \bC$ is a stability function on it.
         \item Let $\cD$ be a triangulated category with given a homomorphism $v : K(\cD) \cong K(\cA) \rightarrow \Lambda$, where $\Lambda$ is a free Abelian group of finite rank. A pre-stability condition $(Z,\cA)$ called {\it with respect to} $\Lambda$ if $Z$ factor through a homomorphism $v$.
     \end{enumerate}
\end{definition}

\begin{definition} \label{dfn.stabilitycondition}
     Let $\sigma=(Z,\cA)$ is a pre-stability condition with respect to a lattice $\Lambda$. Then $\sigma$ called a {\it stability condition} if it satisfies the following condition.
    \begin{itemize}
        \item (Support property) : There exists a $C \in \bR_{>0}$ such that
        \begin{equation} \label{supportproperty}     
            {\rm inf} \left \{ \frac{|Z(E)|}{\| v(E) \|} \ \middle  | \ E \neq 0\in \cP(\phi), \phi \in \bR \right \} \geq C
        \end{equation}
        for some (and equivalently, for all) choices of norm $\| \cdot \|$ on $\Lambda_\bR = \Lambda \otimes_\bZ \bR$.
    \end{itemize}
\end{definition}

We write $\rStab_\Lambda(\cD)$ for the set of stability conditions on $\cD$ with respect to a fixed lattice $\Lambda$. This definition is equivalent to defining a stability condition as a pair $\sigma = (Z,\cP)$, where $\cP$ is a {\it slicing} of $\cD$, as defined below.
\begin{definition}
    A {\it slicing} on a triangulated category $\cD$ is a collection of full additive subcategories $\cP(\phi) \subset \cD$ for all $\phi \in \bR$ that satisfy:
    \begin{enumerate}
        \item $\cP(\phi)[1] = \cP(\phi+1)$,
        \item $\rhom(A,B)=0$ if $A \in \cP(\phi_1)$ and $B \in \cP(\phi_2)$ with $\phi_1 > \phi_2$,
        \item For any object $E \in \cD$ there exists a decomposition
                $$
                  \xymatrix@=15pt{
                        0 = E_0 \ar[rr] & &E_1 \ar[ld] \ar[rr] & & E_2 \ar[ld] \ar[r] & \cdots \ar[r] & E_{l-1} \ar[rr] & & E_l = E \ar[ld] \\
                          & A_1 \ar[lu]& & A_2 \ar[lu] & & & & A_l \ar[lu] & & &
                    }
                $$
               such that $A_i \in \cP(\phi_i)$ and $\phi_i > \phi_{i+1}$ for any $i$.
    \end{enumerate}
\end{definition}

Given a pre-stability condition $(Z,\cA)$, the corresponding slicing $\cP$ is constructed as follows: For each $\phi \in \bR$, 
$$
    \cP(\phi) = \{ E \in \cA \mid E \textrm{ is semistable of phase } \phi \}.
$$
An object in $E \in \cP(\phi)$ is said to be {\it semistable of phase} $\phi$. Such an object is called {\it stable} if it is simple, meaning it has no proper subobjects in $\cP(\phi)$. When we need to be precise, we refer to these as {\it $\sigma$-semistable} or {\it $\sigma$-stable} to specify the dependence on the stability condition $\sigma = (Z,\cP)$.

For a smooth projective variety $S$, we write $K_{num}(S)$ for the numerical Grothendieck group of $S$, defined as $K(S) / {\rm Ker} \  \chi(-,-)$, where $\chi(-,-)$ is a Euler form on $D^b(S)$. We have that $K_{num}(S)$ is a finite generated $\bZ$-lattice. Furthermore, when $S$ is surface, we have
$$
    K_{num}(S) \cong \mnH^*_{alg}(S) := \mnH^0(S,\bZ) \oplus \rns(S) \oplus \mnH^4(S,\bZ).
$$
The choice of $\Lambda = K_{num}(S)$ provied an example of surjective group homomorphism $v : K(D^b(S)) \to \Lambda$ from \eqref{dfn.stabilitycondition}. In our assumption, $v$ coincides the Chern character map 
$$
    \rch : K(D^b(S)) \to \mnH^*(S,\bQ)
$$
and $\Lambda = \rIm(\rch)$. We say that a stability condition $\sigma = (Z,\cP) \in \rStab(S)$ is {\it numerical} if $Z$ factor through $K_{num}(S)$. In this paper, unless otherwise specified, $\Lambda = K_{num}(S)$ is fixed. 

We now briefly describe the deformation theory of $\rStab_\Lambda(S)$ intoroduced in \cite{Br07}. Let the symbol $\| \cdot \|$ denotes the norm on $\rhom_{\bZ}(\Lambda,\bC)$ induced by $\Lambda_\bR = K_{num}(S) \otimes_\bZ \bR$. Recall that any choice of norm is equivalent, and subsequent definitions do not depend on it. For any two stability conditions $\sigma_1,\sigma_2 \in \rStab_\Lambda(S)$, the generalized metric defined as
$$
    d(\sigma_1,\sigma_2) = \rsup \{ |\phi_{\sigma_1}^+(E) - \phi^+_{\sigma_2}(E)|,|\phi_{\sigma_1}^-(E) - \phi^-_{\sigma_2}(E)|, \| Z_1 - Z_2 \| \mid 0 \neq E \in D^b(S)\}
$$
on the space of slices on $D^b(S)$, where $\phi^+\sigma(E)$ and $\phi^-\sigma(E)$ are the largest and smallest phase of a Harder-Narasimhan factor of $E$, respectively. The topology on $\rStab_\Lambda(S)$ has a basis of open sets defined for any $\sigma_0 = (Z_0,\cP_0)$ by
$$
    B_\epsilon(\sigma_0) := \{ \sigma =(Z,\cP) \mid \| Z-Z_0 \| < {\rm sin} (\pi \epsilon), d(\cP_0,\cP) < \epsilon \}.
$$

\begin{proposition}[\cite{Br07} Theorem 7.1] \label{thm.Br07Theorem7.1}
    Let $\sigma_0 = (Z_0,\cP_0)$ be a locally finite stability condition on a triangulated category $\cD$. Then there is an $\epsilon_0 > 0$ such that if $\epsilon_0 > \epsilon > 0$ and $Z : K(S) \to \bC$ is a group homomorphism satisfying 
    $$
        |Z(E)-Z_0(E)| < {\rm sin}(\pi \epsilon) |Z_0(E)|
    $$
    for all $E \in \cD$ semistable in $\sigma_0$, then there is a locally finite stability condition $\sigma=(Z,\cP)$ on $\cD$ with $d(\cP_0,\cP) < \epsilon$.
\end{proposition}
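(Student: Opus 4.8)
The plan is to reconstruct the perturbed slicing $\cP$ locally, on the quasi-abelian categories that $\cP_0$ cuts out on short phase intervals, and then to patch these pieces into a global slicing. Recall (\cite{Br07}, Section 5) that for an interval $I\subset\bR$ of length at most $1$ the full subcategory $\cP_0(I)\subset\cD$ is quasi-abelian, its strict short exact sequences being the triangles of $\cD$ all of whose vertices lie in $\cP_0(I)$, and that $\cP_0((t,t+1])$ is the heart of a bounded t-structure. Local finiteness of $\sigma_0$ supplies an $\eta>0$ with every $\cP_0((\psi-\eta,\psi+\eta))$ of finite length as a quasi-abelian category; since an object of $\cP_0(I)$ with $|I|<1$ is a finite iterated extension of its $\cP_0$-Harder--Narasimhan factors, each of which lies in such a window, every such $\cP_0(I)$ is of finite length as well. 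We fix $\epsilon_0$ small, say $\epsilon_0<1/8$.

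The first input is an elementary but decisive estimate on how phases move. If $E\in\cD$ is $\sigma_0$-semistable of phase $\phi$, then $Z_0(E)=m\exp(i\pi\phi)$ with $m>0$, and the hypothesis $|Z(E)-Z_0(E)|<\sin(\pi\epsilon)\,|Z_0(E)|$ places $Z(E)$ in the open disc of radius $m\sin(\pi\epsilon)$ about $Z_0(E)$, hence in the open cone of arguments $(\pi(\phi-\epsilon),\pi(\phi+\epsilon))$. Consequently, on any window $\cQ=\cP_0((a,b))$ with $b-a<1-2\epsilon$ the perturbed $Z$ still carries every nonzero object into a single open half-plane of $\bC$, so that $Z|_{\cQ}$ is again a stability function, in Bridgeland's quasi-abelian sense, on $\cQ$. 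Crucially $\cQ$ as a category is unchanged --- only the function on it has moved --- so $\cQ$ remains of finite length.

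A stability function on a finite-length quasi-abelian category admits Harder--Narasimhan filtrations: the maximal-destabilizer argument goes through, with the chain conditions furnished by finite length. Applying this in the windows $\cP_0((a,b))$, $b-a<1-2\epsilon$, yields for every $\psi\in\bR$ a candidate full subcategory $\cP(\psi)$ of $Z$-semistable objects of $Z$-phase $\psi$, computed inside any such window whose interior contains $(\psi-\epsilon,\psi+\epsilon)$. One must then check that this is independent of the chosen window and that the $\cP(\psi)$ form a slicing. Axiom (1), $\cP(\psi+1)=\cP(\psi)[1]$, is built into the construction. For axiom (2), $\rhom(A,B)=0$ when $A\in\cP(\psi_1)$, $B\in\cP(\psi_2)$, $\psi_1>\psi_2$: if $\psi_1-\psi_2>2\epsilon$ the cone estimate places $A$ and $B$ in $\cP_0$-slices whose phases differ by more than $\psi_1-\psi_2-2\epsilon>0$, so the vanishing is inherited from $\sigma_0$; if $\psi_1-\psi_2\le 2\epsilon$ both objects lie in a common finite-length window, where objects of distinct $Z$-phases again have no morphisms in the wrong direction. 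For axiom (3), the $Z$-Harder--Narasimhan decomposition of an arbitrary $E\in\cD$ is built top-down: let $\phi^+$ be the top $\cP_0$-Harder--Narasimhan phase of $E$ and $\psi_{\max}$ (within $\epsilon$ of $\phi^+$) the largest $Z$-phase occurring; the maximal subobject of $E$ lying in $\cP(\psi_{\max})$ is detectable inside the finite-length window $\cP_0((\phi^+-2\epsilon,\phi^++\delta))$ at the top of the $\cP_0$-support of $E$, and one iterates downward on the residual object, the process terminating because all the $Z$-Harder--Narasimhan filtrations involved are finite. Once $\cP$ is established, $(Z,\cP)$ is a pre-stability condition; it is locally finite since its small slices are extension-closed subcategories of the finite-length $\cP_0((\psi-\eta,\psi+\eta))$; and the cone estimate forces every nonzero object of $\cP(\psi)$ into $\cP_0((\psi-\epsilon,\psi+\epsilon))$, so all Harder--Narasimhan phases move by less than $\epsilon$ and $d(\cP_0,\cP)<\epsilon$.

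The main obstacle is the patching: showing that ``$Z$-semistable of $Z$-phase $\psi$'' is unambiguous, independent of which admissible window is used to compute it, and that the locally built filtrations assemble into global Harder--Narasimhan filtrations with strictly decreasing phase. This forces one to control the interplay of the perturbation scale $\epsilon$ with the widths of overlapping windows and with the phase gaps in a given filtration, and it is precisely the smallness of $\epsilon_0$ that makes the overlaps compatible and the top-down iteration terminate correctly. By contrast the cone estimate, the transfer of finite length to the perturbed function, and the quasi-abelian Harder--Narasimhan lemma are routine; and once $\cP$ is in hand, the remaining assertions --- that $(Z,\cP)$ is a pre-stability condition, that it is locally finite, and the metric bound --- are formal.
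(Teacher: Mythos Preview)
The paper does not supply its own proof of this proposition; it is quoted as \cite{Br07} Theorem 7.1 and left unproved, with only a brief description of the deformed slicing immediately afterward (the ``thin subcategory'' $\cP_0((a,b))$ with $b-a\le 1-2\epsilon$ and the ``enveloped'' language leading to \eqref{eq.envelopedsubcat}). Your outline is precisely Bridgeland's original argument --- the phase-cone estimate, the finite-length quasi-abelian windows, the local HN filtrations, and the patching into a global slicing --- and it matches the paper's brief summary, so there is nothing further to compare.
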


Let $\sigma = (Z,\cP)$ be a stability condition in $\rStab_\Lambda(S)$. A subcategory $\cQ \subset D^b(S)$ called the {\it thin subcategory} if $\cQ$ is formed $\cQ((a,b))$ with $0 \leq b-a \leq 1-2 \epsilon$ for some $\epsilon > 0$. A $\sigma$-semistable object $E \in D^b(S)$ called {\it enveloped} by $\cQ((a,b))$ if the phase of it satisfies $a+\epsilon \leq \phi(E) \leq b+\epsilon$. 

The slices $\cP$ in the Proposition \ref{thm.Br07Theorem7.1} are constructed as a family of following full additive subcategories: For all $\psi \in \bR$, 
\begin{equation}
    \cP(\psi) = \left \{ F \in D^b(S) \middle | \
    \begin{aligned}
        & Z\textrm{-semistable of phase } \psi, \textrm{which included by} \\ 
        & \textrm{some enveloping thin subcategory \ } \cP_0((a,b)) \\ 
    \end{aligned}
    \right \} \cup \{0\}. \label{eq.envelopedsubcat}
\end{equation}
\vspace{3mm}

\subsection{Gluing construction} 
In this section, we review the gluing construction of stability conditions in \cite{CP}. Throughout this section, we assume that $\cD$ is a triangulated category equipped with a semi-orthogonal decomposition $\cD = \langle \cD_1,\cD_2 \rangle$. By definition, this means that $\cD_1$ and $\cD_2$ are triangulated subcategories in $\cD$ such that $\rhom(E_2,E_1) = 0$ for every $E_1 \in \cD_1$ and $E_2 \in \cD_2$. Furthermore, for every $E \in D$, there exists an exact triangle:
\begin{equation} \label{gluingexactseq}
    E_2 \to E \to E_1 \to E_2[1]
\end{equation}
where $E_i \in \cD_i$ for $i=1,2$. The object $E_1$ is given by $\lambda_1(E)$, where $\lambda_1$ is the left adjoint to the inclusion $\cD_1 \hookrightarrow \cD$, and $E_2$ given by $\rho_2(E)$ where $\rho_2$ is the right adjoint to the inclusion $\cD_2 \hookrightarrow \cD$ respectively.

\begin{definition}[\cite{CP} \S 2] \label{gluingstabilitycondition}
     Let $\sigma_i = (Z_i,\cA_i)$ be a pre-stability condition on $\cD_i$ for $i=1,2$. A pre-stability condition $\sigma=(Z,\cA)$ on $\cD$ is called a \textit{gluing pre-stability condition} of $\sigma_1$ and $\sigma_2$ if it satisfies the following conditions: 
    \begin{enumerate}
        \item $Z = Z_1 \circ \lambda_1 + Z_2 \circ \rho_2$, \label{glfunc} 
        \item $\cA = \{ E \in \cD \ | \ \lambda_1(E) \in \cA_1 \ and \ \rho_2(E) \in \cA_2 \}$. \label{glheart}
    \end{enumerate}
    A gluing pre-stability condition $\sigma$ is called a \textit{gluing stability condition} if it also satisfies the following condition for $E_1 \in \cA_1$ and $E_2 \in \cA_2$:
    \begin{equation}
        \rhom_{\cD}^{\leq 0}(E_1,E_2) = 0. \label{gleq1}
    \end{equation}

\end{definition}

The condition \eqref{gleq1} is known as the \textit{gluing condition}.
Here, we write $\cG l(\sigma_1,\sigma_2)$ (resp.  $\cG l_{pre}(\sigma_1,\sigma_2)$) as the set of gluing stability conditions (resp. gluing pre-stability conditions) of $\sigma_1 \in \rStab(\cD_1)$ and $\sigma_2 \in \rStab(\cD_2)$. 
\vspace{3mm}

\subsection{Inducing stability conditions}
 \label{inducingstabilitycondition}
In this section, we review the inducing stability conditions as described in \cite{Mac09}.
Let $F : \cD \to \cD'$ be an exact functor between two essentially small triangulated categories. 
Assume that $F$ satisfies the following condition:
\begin{equation}
    \rhom_{\cD'}(F(A),F(B)) = 0 \implies \rhom_{\cD}(A,B) = 0. \label{Indeq1}
\end{equation}

\begin{definition}[\cite{Mac09} \S 2.2] \label{indstab}
    Let $\sigma' = (Z',\cP')$ be a stability condition on $\cD'$. Then the \textit{inducing stability condition} $\sigma = F^{-1}\sigma := (Z,\cP)$ defined by
    $$
        Z = Z' \circ F_*, \ \ and \ \  \cP(\phi) = \{ A \in \cD \ | \ F(A) \in \cP'(\phi) \},
    $$
    where $F_* : K(\cD) \otimes \bC \to K(\cD') \otimes \bC$ is the natural morphism on Grothendieck groups induced by $F$.
\end{definition}

\begin{definition}
    Let $F : \cD \to \cD'$ be an exact functor. An abelian category $\cA \subset \cD$ is called $F$-admissible if 
    \begin{enumerate}
        \item $\cA$ is the heart of bounded t-structure, 
        \item $\rhom_{\cD'}^{< 0}(F(A),F(B)) = 0$, for all $A,B \in \cA$,
        \item The restriction of $F$ to $\cA$ is full.
    \end{enumerate}
\end{definition}

\begin{proposition}[\cite{Mac09} Proposition 2.12] \label{Mac09Prop2.12}
    Let $F : \cD \to \cD'$ be an exact functor which satisfies the condition \eqref{Indeq1} and assume that $\langle F(\cD) \rangle = \cD'$. Let $\sigma = (Z,\cP) \in \rStab(\cD)$ be such that its heart $\cP((0,1])$ is of finite length with a finite number of minimal objects. Assume furthermore $\cP((0,1])$ is $F$-admissible. Then $F_* : K(\cD) \to K(\cD')$ is an isomorphism. Define $\sigma' = F(\sigma) = (Z',\cP')$ where $Z' = Z \circ F_*^{-1}$ and $\cP'((0,1]) = \langle F(\cP((0,1])) \rangle$. Then $\sigma'$ is a locally finite stability condition on $\cD'$. Moreover $F^{-1}(\sigma') = \sigma$.
\end{proposition}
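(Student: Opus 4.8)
The plan is to transport all of the structure of $\sigma$ across $F$ by exploiting that $F$ behaves well on the heart $\cA := \cP((0,1])$, and then to recognize $F^{-1}(\sigma')$ as $\sigma$ via a Harder--Narasimhan uniqueness argument. First I would set $\cA' := \langle F(\cA)\rangle \subseteq \cD'$ and check that $\cA'$ is the heart of a bounded t-structure on $\cD'$. Boundedness of the t-structure of $\cA$ gives $\cD = \bigcup_i \cA[i]$, hence $\langle \bigcup_i \cA'[i]\rangle = \langle \bigcup_i F(\cA)[i]\rangle = \langle F(\cD)\rangle = \cD'$; together with the admissibility hypothesis $\rhom_{\cD'}^{<0}(F(A),F(B)) = 0$ for all $A,B \in \cA$, a standard t-structure argument (of the kind underlying the gluing constructions of \cite{CP}) produces the bounded t-structure with heart $\cA'$. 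I expect this to be the main obstacle: it is the only step that uses a genuinely non-formal input (the admissibility condition (2) together with a t-structure lemma), whereas everything afterwards is essentially bookkeeping.

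Next I would record the elementary properties of $F|_\cA \colon \cA \to \cA'$. It is exact, since a short exact sequence in $\cA$ is a triangle in $\cD$ whose image is a triangle in $\cD'$ with all three terms in $\cA'$. It reflects the zero object, because by \eqref{Indeq1} applied with $A = B$ the vanishing $\rhom_{\cD'}(F(A),F(A)) = 0$ forces $\rhom_\cD(A,A) = 0$; combined with exactness (which makes $F$ commute with images) this gives faithfulness, and fullness on $\cA$ is the admissibility condition (3). Let $S_1,\dots,S_n$ be the simple objects of the finite-length category $\cA$, so that $K(\cD) = K(\cA) = \bigoplus_i \bZ[S_i]$. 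Fullness forces $\rhom_{\cA'}(F(S_i),F(S_j))$ to be a quotient of $\rhom_\cA(S_i,S_j)$; this is $0$ for $i \neq j$, and for $i = j$ it is a quotient of the division ring $\rhom_\cA(S_i,S_i)$ still containing $\mathrm{id}$, hence equal to it. From this I would deduce that the $F(S_i)$ are nonzero, pairwise non-isomorphic simple objects of $\cA'$, and --- since every object of $\langle F(\cA)\rangle$ is a finite iterated extension of objects $F(A)$, each of which has, by exactness of $F$, a finite filtration with factors among the $F(S_i)$ --- that $\cA'$ is of finite length with simple objects exactly $\{F(S_1),\dots,F(S_n)\}$. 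Consequently $K(\cD') = K(\cA') = \bigoplus_i \bZ[F(S_i)]$ and $F_* \colon [S_i] \mapsto [F(S_i)]$ is an isomorphism.

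With $F_*$ invertible, put $Z' := Z \circ F_*^{-1}$. Then $Z'(F(S_i)) = Z(S_i) \in \bH$, and writing an arbitrary nonzero $E \in \cA'$ through a composition series gives $Z'(E) = \sum_i m_i Z(S_i)$ with $m_i \in \bZ_{\ge 0}$ not all zero, which lies in $\bH$ since $\bH$ is closed under addition; thus $Z'$ is a stability function on $\cA'$. As $\cA'$ is of finite length with finitely many simple objects, $Z'$ automatically satisfies the Harder--Narasimhan property (cf. \cite{Br07}), so $(Z',\cA')$ is a pre-stability condition, and it is locally finite by finite length of $\cA'$. Since $\cP'((0,1]) = \langle F(\cP((0,1]))\rangle = \cA'$ by construction, this is precisely the $\sigma' = (Z',\cP')$ of the statement, and it is a locally finite stability condition.

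Finally I would identify $F^{-1}(\sigma')$ with $\sigma$. Because $F$ is exact, sends $\cA$ into $\cA'$, and both t-structures are bounded, $F$ is $t$-exact, hence commutes with the cohomology functors; combined with the fact that $F$ reflects zero objects this yields $\{A \in \cD : F(A) \in \cA'\} = \cA$. A short computation on $\cA$, using exactness of $F$ together with $Z' \circ F_* = Z$, then shows $\{A : F(A) \in \cP'(\phi)\} \subseteq \cP(\phi)$ for $\phi \in (0,1]$, and hence for all $\phi$ by shift-invariance of both slicings. Now $F^{-1}(\sigma')$ is itself a stability condition (this is the point of the inducing construction of Definition \ref{indstab}, valid since $F$ satisfies \eqref{Indeq1}), so its slicing admits Harder--Narasimhan filtrations; given $E \in \cP(\phi)$, its $F^{-1}(\sigma')$-HN factors $A_j$ satisfy $F(A_j) \in \cP'(\psi_j)$, hence $A_j \in \cP(\psi_j)$, so the same filtration is a $\cP$-HN filtration of $E$, which must be trivial because $E$ is $\cP$-semistable. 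By uniqueness of HN filtrations this forces $\{A : F(A) \in \cP'(\phi)\} = \cP(\phi)$ for every $\phi$, i.e. $F^{-1}(\sigma') = \sigma$.
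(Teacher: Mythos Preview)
The paper does not contain a proof of this proposition: it is quoted verbatim from \cite{Mac09} and used as a black box, so there is no ``paper's own proof'' to compare against. Your argument is therefore not a reconstruction of anything in this paper but an independent proof of Macr\`i's result.

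That said, your outline is essentially correct and follows the natural line of argument. One point deserves a little more care: you assert that the $F(S_i)$ are simple in $\cA'$ on the basis of the Hom computations $\rhom_{\cA'}(F(S_i),F(S_j)) = \rhom_\cA(S_i,S_j)$ obtained from fullness. Knowing that $\mathrm{End}(F(S_i))$ is a division ring does not, by itself, force simplicity in an arbitrary abelian category. The missing half-line is: if $0 \neq T \subsetneq F(S_i)$, then by your filtration argument $T$ contains some $F(S_j)$ as a subobject, giving a nonzero composite $F(S_j) \hookrightarrow T \hookrightarrow F(S_i)$; for $j \neq i$ this contradicts $\rhom(F(S_j),F(S_i)) = 0$, while for $j = i$ it lies in $\bC \cdot \mathrm{id}_{F(S_i)}$ and hence is an isomorphism, forcing $T = F(S_i)$. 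With that patch the rest of your argument --- finite length of $\cA'$, the isomorphism $F_*$, the stability function $Z'$, local finiteness, and the HN-uniqueness identification $F^{-1}(\sigma') = \sigma$ --- goes through as written.
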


\subsection{Exceptional collection on triangulated categories} \label{exceptionalcollection}
In this section, we recall the concept of an exceptional collection in a triangulated category following \cite{HuyFM}, \cite{Mac07} and \cite{maiorana2018modulisemistablesheavesquiver}. We assume that $\cD$ be a $\bC$-linear triangulated category. This means that for any two objects $E,F \in \cD$ the $\bC$-vector space $\bigoplus_i \rhom_{\cD}^i(E,F)$ is finite dimensional. 

\begin{definition}
    \begin{enumerate}
        \item  An object $E \in \cD$ is called {\it exceptional} if it satisfies
                $$
                    \rhom^i(E,E) = 0 \ \text{for} \ i \neq 0, \quad \textrm{and} \quad \rhom^0(E,E) = \bC.
                $$
        \item  An orderd collection of exceptional objects $\{E_1,...,E_n\}$ is called {\it exceptional collection} if it satisfies
                $$
                    \rhom(E_i,E_j) = 0 \ \textrm{for all} \ i > j.
                $$
                An exceptional collection is called {\it strong} if $\rhom^k(E_i,E_j) = 0$ for all $i,j$ and $k \neq 0$.
        \item  An exceptional collection $\cE = \{E_1,...,E_n\}$ is said to be {\it Ext} if 
                $$
                    \rhom_{\cD}^{\leq 0}(E_i,E_j) \ \textrm{for all} \ i \neq j.
                $$
                It is called {\it full} if it generates $\cD$ through shifts and extensions.
    \end{enumerate}
\end{definition}

An Ext-exceptional collection can be constructed from a strong one. For instance, if $\{E_1,...,E_n \}$ is a strong, full exceptional collection on $\cD$, then $\langle E_1[n],E_1[n-1],...,E_n \rangle$ is an Ext-exceptional collection.

\begin{example}
    Let $X = \bP^n$. The derived category $D^b(\bP^n)$ has a strong, full exceptional collection known as Beillinson's exceptional collection: $\{ \cO,\cO(1),...,\cO(n) \}$. Correspondingly, $\{ \cO[n],\cO(1)[n-1],...,\cO(n) \}$ is an Ext-exceptional collection on $D^b(\bP^1)$.
\end{example}

Finally, the {\it left dual} $^\vee\cE = (^\vee E_n, ..., ^\vee E_1)$ and the {\it right dual} $\cE ^\vee = (E_n^\vee , ..., E_1^\vee )$ of an exceptional collection $\{ E_1,...,E_n \}$ are defined by the conditions: 
$$
    \rhom_\cD(^\vee E_i,E_j[l]) = \left \{\begin{aligned}  \bC \quad  &{\rm if \ } i=j=n-l \\ 0 \quad  & {\rm otherwise} \quad , \end{aligned} \right .
$$
$$
    \rhom_\cD( E_i,E_j^\vee [l]) = \left \{\begin{aligned}  \bC \quad  &{\rm if \ } i=j=l \\ 0 \quad  & {\rm otherwise} \quad .\end{aligned} \right .
$$
Given a full exceptional collection, its left and right dual always exist, are unique, and are also full. These can be constructed by repeated mutations \cite{maiorana2018modulisemistablesheavesquiver}. It is also worth noting that if a full exceptional collection exists, the Euler form $\chi$ is nondegenerate, and $K(\cD) \cong  K_{num}(\cD)$ is freely generated by the elements in the collection. 

Let $\mathcal{E} = (E_1, \dots, E_n)$ be a full strong exceptional collection of $\mathcal{D}$ and set $T = \bigoplus_{i=1}^n E_i$. It is a well-known fact that the endomorphism algebra $A := \mathrm{End}(T)$ is a basic algebra. Therefore, $A$ is isomorphic to the path algebra of a quiver $Q$ with relations $I$:
$$
    A \cong \bC Q / I.
$$
The vertices of $Q$ are labeled by $1, \dots, n$. Since $\mathcal{E}$ is an exceptional collection (specifically $\mathrm{Hom}(E_j, E_i) = 0$ for $j > i$), $Q$ is an ordered quiver, meaning there are no arrows from vertex $j$ to vertex $i$ if $j > i$. Furthermore, let $D^b(X)$ be the bounded derived category of coherent sheaves on a smooth projective variety $X$, and let $\mathbf{Mod}^{\mathrm{f.g.}}_A$ be the category of finitely generated right $A$-modules. Then,
\begin{thm}[\bf \cite{Bon98} Theorem 6.2] \label{Bondalequiv}
    With the notation above, we have a triangulated equivalence:
    $$
        \Phi : \bfR \rhom(T,-) : D^b(X) \to D^b(\mathrm{\mathbf{Mod}}^{\mathrm{f.g.}}_{A}).
    $$
\end{thm}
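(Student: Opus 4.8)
The statement is Bondal's tilting theorem, so the plan is to recall its standard proof: show that $T=\bigoplus_{i=1}^{n}E_{i}$ is a tilting object of $D^{b}(X)$ and deduce that $\Phi=\bfR\rhom(T,-)$ is a triangulated equivalence onto the bounded derived category of $A=\rhom(T,T)$.

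First I would check the tilting hypotheses on $T$. Since $\cE$ is a strong exceptional collection, $\rhom^{k}_{D^{b}(X)}(T,T)=\bigoplus_{i,j}\rhom^{k}_{\cD}(E_{i},E_{j})=0$ for $k\neq 0$, and $\rhom^{0}_{D^{b}(X)}(T,T)=A$ by definition; since $\cE$ is full, $T$ generates $D^{b}(X)$ through shifts and extensions. As $X$ is smooth projective, the sheaf $T$ is a perfect complex, $\rExt^{i}(T,\cF)$ vanishes for $i<0$ and for $i>\dim X$, and each $\rExt^{i}(T,\cF)$ is finite dimensional over $\bC$, hence finitely generated over $A$. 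Thus $\Phi$ is a well-defined exact functor $D^{b}(X)\to D^{b}(\mathbf{Mod}^{\mathrm{f.g.}}_{A})$.

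Next I would prove full faithfulness by a two-step bootstrap. By construction $\Phi(T)=A$, and the natural map $\rhom_{D^{b}(X)}(T,F[i])\to\rhom_{D^{b}(\mathbf{Mod}^{\mathrm{f.g.}}_{A})}(\Phi T,\Phi F[i])=H^{i}(\Phi F)$ is an isomorphism for every $F$ and $i$. The full subcategory of $F\in D^{b}(X)$ for which $\rhom(T,F[i])\to\rhom(\Phi T,\Phi F[i])$ is an isomorphism for all $i$ is a triangulated subcategory closed under direct summands and contains $T$, hence equals $D^{b}(X)$; so the isomorphism holds for every $F$. Then the full subcategory of $E\in D^{b}(X)$ for which $\rhom(E,F[i])\to\rhom(\Phi E,\Phi F[i])$ is an isomorphism for all $F$ and all $i$ is again triangulated, thick, and now contains $T$, hence equals $D^{b}(X)$, so $\Phi$ is fully faithful. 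Here I use that $D^{b}(X)$ is idempotent complete, so that classical generation by $T$ is exactly what is needed.

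Finally I would establish essential surjectivity. Since $\Phi$ is fully faithful and exact and $D^{b}(X)$ is idempotent complete, the essential image of $\Phi$ is a thick triangulated subcategory of $D^{b}(\mathbf{Mod}^{\mathrm{f.g.}}_{A})$ containing $\Phi(T)=A$, so it suffices to show that $A$ classically generates $D^{b}(\mathbf{Mod}^{\mathrm{f.g.}}_{A})$. This is where the ordered structure of $\cE$ enters: because $\rhom(E_{j},E_{i})=0$ for $j>i$, the quiver $Q$ is acyclic, so $A=\bC Q/I$ is a directed finite-dimensional algebra and hence has finite global dimension (by downward induction along the ordering, using $0\to\mathrm{rad}(P_{i})\to P_{i}\to S_{i}\to 0$, each simple module $S_{i}$ has finite projective dimension). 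Consequently every simple $A$-module lies in the thick subcategory generated by $A$, and since every object of $D^{b}(\mathbf{Mod}^{\mathrm{f.g.}}_{A})$ is built from simple modules by extensions and shifts, $A$ generates; thus $\Phi$ is an equivalence. As an alternative to these last two steps one may instead take $-\dotimes_{A}T$ as candidate quasi-inverse and check that the unit and counit of the adjunction $(-\dotimes_{A}T)\dashv\Phi$ are isomorphisms, which it suffices to verify on the generators $A$ and $T$. The step I expect to require the most care is this essential-surjectivity argument, as it hinges on $A$ having finite global dimension, whose cleanest justification is the acyclicity of $Q$ coming from $\cE$ being an ordered exceptional collection; the boundedness and finiteness needed to land $\Phi$ in $D^{b}(\mathbf{Mod}^{\mathrm{f.g.}}_{A})$ and the fully-faithful bootstrap are routine given that $X$ is smooth projective and $D^{b}(X)$ is idempotent complete.
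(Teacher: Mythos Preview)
Your proof is correct and follows the standard argument for Bondal's tilting theorem. Note, however, that the paper does not provide its own proof of this statement: it is quoted as \cite{Bon98} Theorem 6.2 and used as background, so there is no proof in the paper to compare against. What you have written is essentially Bondal's original argument (tilting object, fully faithful via d\'evissage on the generator $T$, essential surjectivity via finite global dimension of the directed algebra $A$), and is exactly what one would expect here.
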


\vspace{3mm}
\begin{example}
    In the case of $X = \bP^n$, the corresponding algebra $\bC Q / I$ is given by the following quiver with relations:
    $Q$ has $n+1$ vertices, $\{ \cO,....,\cO(n) \}$, and $n(n+1)$ arrows $\phi_{j,k} : \cO(k) \to \cO(k+1)$, which correspond to a basis of the global sections $\mnH^0(\bP^n,\cO(1))$. These arrows are indexed by $j=0,...,n$ and $k=0,...,n-1$. The relations are given by $\phi_{j,k} \phi_{j',k+1} = \phi_{j',k} \phi_{j,k+1}$.
\end{example}

\vspace{3mm}
Based on the following theorem, there is a method for constructing a heart of bounded t-structure from an Ext-exceptional collections.
\begin{thm}[\cite{Mac07} Lemma 3.14 and Lemma 3.16] \label{Mac07Lem3.143.16}
    Let $\langle E_1,...,E_n \rangle$ be a full Ext-exceptional collection in $\cD$ then the category generated by extensions $\langle E_1,...,E_n \rangle$ is a heart of a bounded t-structure. Assume that $(Z,\cP)$ is a stability condition and $E_1,...,E_n$ are all in $\cP((0,1])$ for some $\phi \in \bR$, then $\langle E_1,...,E_n \rangle = \cP((0,1])$ and each $E_i$ is stable.
\end{thm}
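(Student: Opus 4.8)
The plan is to treat the two assertions of the statement separately, using the standard criterion (as in \cite{Mac07}) that a full additive subcategory $\cA\subset\cD$ is the heart of a bounded t-structure precisely when (i) $\rhom_\cD(A,B[k])=0$ for all $A,B\in\cA$ and all $k<0$, and (ii) every $E\in\cD$ admits a finite filtration (a Postnikov tower) with graded pieces of the form $A_j[k_j]$, $A_j\in\cA$, $k_1>k_2>\cdots>k_l$.

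\emph{First assertion.} Write $\cA=\langle E_1,\dots,E_n\rangle$ for the extension closure. Condition (i) is quick: one has $\rhom^{<0}_\cD(E_i,E_j)=0$ for all $i,j$ -- for $i\neq j$ by the Ext-property $\rhom^{\leq 0}_\cD(E_i,E_j)=0$ and for $i=j$ by exceptionality -- and then an induction on the lengths of extension filtrations, via the long exact sequences of the defining triangles together with the fact that $\cA$ is closed under extensions, promotes this to $\rhom^{<0}_\cD(A,B)=0$ for all $A,B\in\cA$. For condition (ii) the cleanest route I see is an induction on $n$ using the gluing construction of \cite{CP}: since $\{E_1,\dots,E_n\}$ is a full exceptional collection, $\cD=\langle\,\langle E_1\rangle,\ \langle E_2,\dots,E_n\rangle\,\rangle$ is a semiorthogonal decomposition; on $\langle E_1\rangle\simeq D^b(\mathrm{Vect})$ take the standard heart $\langle E_1\rangle_{\mathrm{ext}}$ and on $\langle E_2,\dots,E_n\rangle$ the heart $\langle E_2,\dots,E_n\rangle_{\mathrm{ext}}$ supplied by the inductive hypothesis; the gluing condition \eqref{gleq1} between them is exactly $\rhom^{\leq 0}_\cD(E_1,E_j)=0$ for $j\geq 2$, i.e.\ the Ext-property, so \cite{CP} produces a glued heart of a bounded t-structure on $\cD$, which one identifies with $\cA$ using the triangle $\rho_2(E)\to E\to\lambda_1(E)$. (Alternatively, the whole assertion is \cite{Mac07}.) Once $\cA$ is known to be a heart, it follows that it has finite length with simple objects precisely $E_1,\dots,E_n$: every object of $\cA$ is an iterated extension of the $E_i$ and $K(\cA)\cong K(\cD)$ is free on the classes $[E_i]$, so every object has class a non-negative $\bZ$-combination of the $[E_i]$ and a composition series whose length is the sum of the coefficients; this bounds lengths, and a nonzero proper subobject $A\subsetneq E_i$ would force $[A]=[E_i]$ (both $[A]$ and $[E_i]-[A]$ being non-negative), hence $A\cong E_i$, impossible in a finite-length category -- so each $E_i$ is simple, while conversely a simple object has class some $[E_i]$ and is therefore isomorphic to $E_i$.

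\emph{Second assertion.} Let $(Z,\cP)$ be a stability condition with $E_i\in\cP((0,1])$ for every $i$. Since $\cP((0,1])$ is the heart of a bounded t-structure and is closed under extensions, it contains $\cA$. I would then use the standard fact that one bounded t-structure heart contained in another must coincide with it: for $0\neq X\in\cP((0,1])$, consider its cohomology objects $H^j_\cA(X)\in\cA\subseteq\cP((0,1])$ relative to the t-structure built in the first assertion, and read off from the truncation triangles, using the vanishing of $\rhom^{<0}_\cD(A,B)$ for $A,B\in\cP((0,1])$, that the top and the bottom nonzero degrees of $H^\bullet_\cA(X)$ are simultaneously $\leq 0$ and $\geq 0$; hence $X\in\cA$ and $\cA=\cP((0,1])$. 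For stability, each $E_i$ is a simple object of $\cA=\cP((0,1])$, so its Harder-Narasimhan filtration inside $\cP((0,1])$ is trivial and $E_i$ is semistable of some phase $\phi_i$; being simple in $\cP((0,1])$ it is in particular simple in $\cP(\phi_i)$, i.e.\ $\sigma$-stable.

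\emph{Main obstacle.} The substantive point is condition (ii) of the first assertion -- producing the Postnikov towers with pieces in strictly decreasing shifts of $\cA$. A direct ``sort the shifts'' argument does not quite work once there are nonzero positive Exts $\rhom^{>0}_\cD(E_i,E_j)$ with $i<j$, which is why I would route it through the semiorthogonal decomposition and the \cite{CP} gluing, where the structural input (semiorthogonality $=$ the exceptional-collection axiom, gluing condition $=$ the Ext-axiom) handles the bookkeeping automatically. The remaining steps -- negative-Ext vanishing, finite length, the ``nested hearts agree'' lemma, and simple $\Rightarrow$ stable -- are routine.
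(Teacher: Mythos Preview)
The paper does not supply its own proof of this statement; it is quoted verbatim from \cite{Mac07} (Lemmas 3.14 and 3.16) and used as a black box. So there is no in-paper argument to compare against.

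Your reconstruction is correct and follows essentially the same line as Macrì's. One small comment on presentation: in the simplicity argument you conclude ``$[A]=[E_i]$, hence $A\cong E_i$, impossible in a finite-length category''. The cleaner way to finish is to note that $[E_i/A]=0$ forces $E_i/A=0$ (since every nonzero object of $\cA$ has a filtration by the $E_j$'s and hence nonzero class), so $A=E_i$ as a subobject, contradicting properness; no appeal to finite length is needed at that point. Also, your use of the \cite{CP} gluing to establish condition~(ii) is a nice repackaging of Macrì's inductive Postnikov argument --- the semiorthogonal decomposition $\cD=\langle\,\langle E_1\rangle,\langle E_2,\dots,E_n\rangle\,\rangle$ together with the Ext-hypothesis is exactly the input the gluing needs, and the resulting glued heart is visibly $\langle E_1,\dots,E_n\rangle$.
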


Let $\cA = \langle E_1,...,E_n \rangle$ be a heart generated by an full Ext-exceptional collection on $\cD$. The following is a method for construct a stability condition from $\cA$, which introduced in \cite{Mac07}: Fixed $\zeta_1,...,\zeta_n \in \bH$, and define a homomorphism $Z : K(\cA) \to \bC$ as 
$$
    Z(E_i) = \zeta_i \quad {\rm for \ }i = 1,...,n.
$$
By the Theorem \ref{Mac07Lem3.143.16}, a pair $(Z,\cA)$ admits a stability condition on $\cD$ where $\cP((0,1]) = \cA$. Such stability conditions are called {\it quiver} or {\it algebraic}.

\subsection{Stability conditions on $D^b(\bP^1)$}
The structure of the space of stability conditions on a smooth projective curve $C$ depends on its genus. According to \cite{Oka05}, when $g(C)=0$ (i.e. $C \cong \bP^1$), the space $\rStab(\bP^1)$ is homeomorphic to $\bC^2$. This contrasts with the case where $g(C) > 0$, for which $\rStab(C)$ is homeomorphic to $\widetilde{GL^+(2,\bR)}$. (\cite{Mac07} Theorem 2.7). This structural difference arises because, when $g(C)=0$, the derived category $D^b(\bP^1)$ contains the exceptional collections.

Recall that $D^b(\bP^1)$ has a strong, full exceptional collection $\{ \cO(k-1),\cO(k) \}$ for all $k \in \bZ$. From this, one can construct the Ext-exceptional collection $\{ \cO(k-1)[1], \cO(k) \}$. By Theorem \ref{Mac07Lem3.143.16}, the extension closure of this correction
$$
    \langle \cO(k-1)[1], \cO(k) \rangle,
$$
forms the heart of a bounded t-structure on $D^b(\bP^1)$. (\S \ref{exceptionalcollection})

The following lemma will be used in subsequent sections.
\begin{lemma}[\cite{Oka05} Lemma 3.11] \label{lemOka3.1}
    \begin{enumerate}
        \item For $n \in \bZ$ there exists exact triangle on $D^b(\bP^1)$
            \begin{enumerate}
                \item $\cO(k+1)^{n-k} \rightarrow \cO(n) \rightarrow \cO(k)^{n-k-1}[1],  (if \ n>k+1) $
                \item \hspace{2mm} $\cO(k+1)^{k-n}[-1] \rightarrow \cO(n) \rightarrow \cO(k)^{k-n+1}$,  (if $n<k$).
            \end{enumerate}
        \item \hspace{2mm} For $k \in \bZ$ and $x \in \bP^1$ there exsists exact triangle 
                $$
                    \cO(k) \rightarrow \cO_x \rightarrow \cO(k-1)[1].
                $$
    \end{enumerate}
    where $\cO_x$ is the skysclaper sheaf of $\bP^1$. 
\end{lemma}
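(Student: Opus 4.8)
The plan is to obtain all three families of triangles from two elementary short exact sequences of sheaves on $\bP^1$, together with the Euler–Koszul resolution of a line bundle, and then to rotate the resulting distinguished triangles into the stated form. The two input sequences are the point sequence
$$0 \to \cO(-1) \to \cO \to \cO_x \to 0,$$
valid because a point of $\bP^1$ is an effective divisor of degree one, and the ``twisted Koszul'' resolution
$$0 \to \cO(-1)^{\oplus d} \to \cO^{\oplus(d+1)} \to \cO(d) \to 0 \qquad (d \geq 1),$$
which is the kernel sequence of the evaluation map $H^0(\cO(d)) \otimes \cO \to \cO(d)$ (surjective since $\cO(d)$ is globally generated). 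The kernel $M_{\cO(d)}$ has rank $d$ and degree $-d$, and the long exact cohomology sequence shows $H^0(M_{\cO(d)}) = H^1(M_{\cO(d)}) = 0$ (the evaluation map is an isomorphism on $H^0$, and $H^1(\cO^{\oplus(d+1)}) = 0$), so Grothendieck's splitting theorem forces $M_{\cO(d)} \cong \cO(-1)^{\oplus d}$.

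For part (1)(a) I would take $d = n-k-1 \geq 1$ (using $n > k+1$) and twist the Koszul resolution by $\cO(k+1)$ to obtain
$$0 \to \cO(k)^{\oplus(n-k-1)} \to \cO(k+1)^{\oplus(n-k)} \to \cO(n) \to 0,$$
view it as a distinguished triangle $\cO(k)^{\oplus(n-k-1)} \to \cO(k+1)^{\oplus(n-k)} \to \cO(n) \to \cO(k)^{\oplus(n-k-1)}[1]$, and rotate it to $\cO(k+1)^{\oplus(n-k)} \to \cO(n) \to \cO(k)^{\oplus(n-k-1)}[1]$, which is exactly the asserted triangle. For part (1)(b), with $n < k$, I would instead dualize the Koszul resolution (all terms locally free, so $\rchom(-,\cO)$ is exact) to get $0 \to \cO(-d) \to \cO^{\oplus(d+1)} \to \cO(1)^{\oplus d} \to 0$, take $d = k-n \geq 1$, and twist by $\cO(k)$, giving
$$0 \to \cO(n) \to \cO(k)^{\oplus(k-n+1)} \to \cO(k+1)^{\oplus(k-n)} \to 0;$$
rotating the corresponding triangle backwards yields $\cO(k+1)^{\oplus(k-n)}[-1] \to \cO(n) \to \cO(k)^{\oplus(k-n+1)}$ as required. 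Part (2) is the most direct: twisting the point sequence by $\cO(k)$ gives $0 \to \cO(k-1) \to \cO(k) \to \cO_x \to 0$ (using $\cO_x(k) \cong \cO_x$), whose associated triangle rotates to $\cO(k) \to \cO_x \to \cO(k-1)[1]$.

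The argument is essentially formal once the Koszul resolution is in hand, so the step deserving genuine attention is the identification of the syzygy bundle $M_{\cO(d)}$: the rank and degree alone do not pin down a bundle on $\bP^1$, and one must use the vanishing $H^0 = H^1 = 0$ (a short cohomology computation) before invoking Grothendieck splitting. Alternatively one can avoid this via an induction on $d$ starting from the twisted Euler sequence $0 \to \cO(j) \to \cO(j+1)^{\oplus 2} \to \cO(j+2) \to 0$ and splicing, or by passing through Bondal's equivalence $\bfR\rhom(\cO(k)\oplus\cO(k+1),-)$ to modules over the Kronecker algebra, where the resolution is read off from the projective resolution (of length $\leq 1$, the algebra being hereditary) of the corresponding representation; the direct splitting argument is the shortest. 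Note also that every statement is invariant under an overall twist by $\cO(t)$, so one may normalize $k$ (e.g. $k \geq 0$) to keep the intermediate cohomology groups in their stable range.
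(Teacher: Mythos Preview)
Your proof is correct and complete. The identification of the syzygy bundle $M_{\cO(d)} \cong \cO(-1)^{\oplus d}$ via the vanishing of $H^0$ and $H^1$ together with Grothendieck splitting is carried out properly, and the subsequent twists and rotations yield exactly the stated triangles.

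As for comparison: the paper does not supply its own proof of this lemma. It is quoted verbatim from \cite{Oka05} (Lemma~3.1 there) and used as a black box in later arguments, so there is nothing in the paper to compare your argument against. Your write-up is in fact more detailed than what one finds in the cited source, where the triangles are essentially asserted on the basis of the standard generation of $D^b(\bP^1)$ by $\cO(k)$ and $\cO(k+1)$. The Koszul/evaluation approach you chose is the cleanest direct route and has the advantage of making all the multiplicities transparent; the alternative via the Kronecker quiver that you mention at the end would amount to the same computation in different language.
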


\begin{thm}[\cite{Oka05} Corollary 3.4] \label{Oka05.cor3.4}
    The hearts of bounded t-structure on $D^b(\bP^1)$ that admits a central charge is one of the following:
\begin{enumerate}
    \item $\rCoh(\bP^1)[j]$
    \item $\cA_{k,j,p} = \langle \cO(k-1)[j+p], \cO(k)[j] \rangle$
\end{enumerate}
for $k,j \in \bZ$ and $p > 0$. 
\end{thm}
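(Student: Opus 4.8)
The assertion is an equivalence --- every heart in the two families admits a central charge, and conversely these exhaust the hearts that do --- and the existence direction is routine, so I would dispose of it first. For $\rCoh(\bP^1)[j]$ one takes the classical slope central charge $Z(E) = (-\rdeg + i\,\rrank)(E[-j])$; its Harder--Narasimhan property is the existence of slope filtrations of coherent sheaves on a curve. For $\cA_{k,j,p}$ one checks that $\{\cO(k-1)[p],\cO(k)\}$ is a full Ext-exceptional collection: fullness is inherited from Beilinson's full strong exceptional collection $\{\cO(k-1),\cO(k)\}$ (shifts do not change the generated subcategory), while the vanishings $\rhom_{\cD}^{\le 0}(\cO(k),\cO(k-1)[p]) = 0$ and $\rhom_{\cD}^{\le 0}(\cO(k-1)[p],\cO(k)) = 0$ follow from $\rhom^{\bullet}(\cO,\cO(-1)) = 0$, from $\rhom^i(\cO,\cO(1)) = 0$ for $i \neq 0$, and from $p > 0$. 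Then Theorem \ref{Mac07Lem3.143.16} makes $\langle \cO(k-1)[p],\cO(k)\rangle$ the heart of a bounded $t$-structure, and assigning arbitrary $\zeta_1,\zeta_2 \in \bH$ to the two exceptional objects produces a compatible central charge by the algebraic construction of \S\ref{exceptionalcollection}; the heart has finite length, so the Harder--Narasimhan property is automatic. Shifting by $[j]$ finishes this direction.

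For the converse, let $\cA = \cP((0,1])$ be the heart of a stability condition $\sigma = (Z,\cP)$ on $D^b(\bP^1)$. Since the $t$-structure is bounded, after replacing $\sigma$ by a shift we may assume $\cA$ is contained in the extension closure of $\rCoh(\bP^1), \rCoh(\bP^1)[1], \dots, \rCoh(\bP^1)[b]$ with $b \ge 0$ chosen minimal and with $\cA \cap \rCoh(\bP^1) \neq 0$. I would first treat $b = 0$: here $\cA \subseteq \rCoh(\bP^1)$, and a short $t$-structure computation --- using that $\bP^1$ is hereditary, so $\rhom^i(-,-) = 0$ for $i \ge 2$ --- shows $\rCoh(\bP^1)^{\le 0} = \cA^{\le 0}$, whence $\cA = \rCoh(\bP^1)$; undoing the shift gives $\cA = \rCoh(\bP^1)[j]$, which is case (1).

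The case $b \ge 1$ is the substance of the theorem, and it is here that the central-charge hypothesis does the work (most tilts of $\rCoh(\bP^1)$ have $b \ge 1$ but carry no central charge). The reduction I would aim for is: produce an integer $k$ and a shift so that two consecutive line bundles $\cO(k-1)[m+p]$ and $\cO(k)[m]$ both lie in $\cP((0,1])$; then $\{\cO(k-1)[m+p],\cO(k)[m]\}$ is a full Ext-exceptional collection contained in $\cP((0,1])$, so the second part of Theorem \ref{Mac07Lem3.143.16} forces $\cA = \langle \cO(k-1)[m+p],\cO(k)[m]\rangle = \cA_{k,m,p}$, which is case (2). Here $p \ge 1$ is automatic: two objects of a common heart with $\rhom(\cO(k-1),\cO(k)) = \rExt^0 \neq 0$ cannot have shift difference $\le 0$ without either violating the vanishing of negative $\rhom$'s between heart objects or making the extension closure fail to be a heart. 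To obtain such a pair of line bundles I would take Harder--Narasimhan filtrations of $\cO$ and $\cO(1)$ with respect to $\sigma$, feed their semistable factors (whose numerical classes live in the rank-$2$ lattice $K_{num}(\bP^1)$) through the exact triangles of Lemma \ref{lemOka3.1} relating line bundles and skyscrapers, and use that $Z$ takes values in $\bH$ on the two-dimensional cone of effective classes to conclude that, up to a common shift, the stable objects of $\sigma$ among the $\cO(n)$ are exactly two consecutive ones. \emph{This last step is the genuine obstacle}: ruling out, purely from the fact that $Z$ is a central charge together with Lemma \ref{lemOka3.1}, any non-geometric heart containing three or more distinct ``slopes'' of stable line bundles, or a stable skyscraper coexisting with a destabilized line bundle. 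The rank-$2$ constraint on $K_{num}(\bP^1)$ --- so that $Z$ has room for at most two extremal rays in $\bH$ --- is exactly what should make this finite case analysis close.
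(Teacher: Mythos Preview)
The paper does not supply a proof of this statement; it is cited from \cite{Oka05} and used as a black box. There is therefore no ``paper's own proof'' to compare your proposal against.

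On the merits of the proposal itself: the existence direction is correct and cleanly argued. For the converse, your reduction strategy --- locate two consecutive line bundles, suitably shifted, inside the heart and then invoke Theorem~\ref{Mac07Lem3.143.16} --- is the right architecture and matches Okada's. But you explicitly stop short of the decisive step, flagging it as ``the genuine obstacle,'' and it is one: Okada's argument proceeds by first classifying \emph{all} indecomposable $\sigma$-semistable objects, showing via the triangles of Lemma~\ref{lemOka3.1} that they are either every $\cO(n)$ together with every $\cO_x$ (the geometric case, giving $\rCoh(\bP^1)[j]$), or exactly the pair $\cO(k-1),\cO(k)$ for a single $k$. Your remark that the rank-$2$ lattice $K_{num}(\bP^1)$ ``should'' force at most two extremal rays is the correct heuristic, but it is not yet an argument --- one still has to rule out, for instance, that some $\cO(n)$ with $n\notin\{k-1,k\}$ is stable while $\cO(k-1)$ and $\cO(k)$ are, and this is where the triangles of Lemma~\ref{lemOka3.1} do real work. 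As written, your converse is an outline rather than a proof. (Your $b=0$ case is also somewhat compressed: the inclusion $\cA\subseteq\rCoh(\bP^1)$ of two hearts does force equality, but the ``short $t$-structure computation'' you invoke deserves a line or two.)
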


\subsection{Gluing construction on Ruled surfaces}
In this section, we briefly review the relevant content from \cite{Uch}. A {\it ruled surface} is a smooth projective surface $S$ equipped with a surjective morphism $p : S \to C$ to a smooth projective curve, such that every fiber $f$ over a point $x \in C$ is isomorphic to $\bP^1$. The projection $p$ admits a section $s : C \to S$ satisfying $s \circ p =\rid$. The image $s(C)$ denoted by $C_0$. The sheaf $\cE = \bfR p_*\cO_S(C_0)$ is a rank $2$ vector bundle on $C$, and $S$ is isomorphic to the projective bundle $p : \bP(\cE) \to C$.

In the particular case where $g(C)=0$, the surface $S$ is called a {\it Hirzebruch surface}. It is a projective bundle over $\bP^1$ associated with a locally free sheaf $\cE = \cO \oplus \cO(-e)$ for some integer $e \geq 0$. The number $e$ referred to as the {\it degree} of $S$. The intersection numbers on $S$ are given by $C_0^2 = -e$, $C_0.f = 1$ ,$f^2 = 0$. The canonical divisor of $S$ is given by
$$
    K_S = -2C_0 + (2g-2 + \rdeg(\cE))f.
$$
The Neron-Severi group $\rns(S)$ is generated by $C_0$ and $f$. For $E \in D^b(S)$, we will use the short-hand notation $E(nC_0+mf)$ for $E \otimes^{\bf L} \cO_S(nC_0+mf)$. 

We now review how to construct gluing stability conditions on $S$, following \cite{Uch}. Since the projection $p$ is a flat, the left derived pullback $\bfL p^*$ is an exact functor. The derived tensor product $\otimes^{\bfL}$ is also exact since $S$ is smooth. For simplisity, we will use the symbols $p^*$ and $\otimes$ instead of $\bfL p^*$ and $\otimes^{\bfL}$, respectively. Applying Orlov's projective bundle formula(\cite{Orl}), we obtain the following semi-orthogonal decomposition:
$$
    D^b(S) = \langle  p^*D^b(C) \otimes \cO_S(-C_0) ,  p^*D^b(C) \rangle.
$$

Let $\cD_1 = p^*D^b(C) \otimes \cO_S(-C_0)$ and $\cD_2 = p^*D^b(C)$. As a consequence of the section $\S 2.3$, the spaces of stability conditions on these categories can be described as
\begin{equation}    
    \rStab(\cD_1) = \left \{ (Z_1,\cP_1) \ \middle  | \   
        \begin{split}
            & (Z,\cP) \in \rStab(C), Z_1 = Z \circ F_1^{-1}, \\ 
            & \cP_2(\phi) = p^*\cP(\phi)    \otimes \cO_S(-C_0) \ {\rm for \ all} \ \phi \in \bR  
        \end{split}
        \right \},  \label{IndStabD1}
\end{equation}
\begin{equation}
    \rStab(\cD_2) = \left \{ (Z_2,\cP_2) \ \middle | \  
        \begin{split}    
            &(Z,\cP) \in \rStab(C), Z_2 = Z \circ F_2^{-1}, \\ 
            & \cP_2(\phi) = p^*\cP(\phi) \ {\rm for \ all} \ \phi \in \bR. 
        \end{split} \right \},  \label{IndStabD2}
\end{equation}
where 
\begin{equation}
    F_1 : K(D^b(C)) \cong K(\cD_1), \quad F_2 : K(D^b(C)) \cong K(\cD_2)  \label{isomofGrothendieckgroups}
\end{equation}
are the canonical isomorphisms of Grothendieck groups(\cite{Uch} \S 3).

Note that $\cD_1$ has a left adjoint functor $\lambda_1 : D^b(S) \to \cD_1$ of embedding $\cD_1 \hookrightarrow D^b(S)$. Similary, $\cD_2$ has a right adjoint functor $\rho_2 : D^b(S) \to \cD_2$. In our cases, by using the Grothendieck-Verdier duality, we can explicitly calculate these functors.
\begin{lemma}[\cite{Uch} Proposition 3.3] \label{Uch.Proposition3.3}
    Let $E$ be an object of $D^b(S)$. Then
    \begin{enumerate}
        \item $\lambda_1(E) = p^*(\bfR p_* E(-C_0 + (2g(C)-2 + \rdeg(\cE))f) \otimes T_C [1] ) \otimes \cO_S(-C_0)$, 
        \item $\rho_2(E) = p^* \bfR p_* E$
    \end{enumerate}
    where $T_C$ is a tangent bundle on $C$.
\end{lemma}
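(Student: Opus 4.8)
Throughout, I will use the conventions of the excerpt ($p^\ast=\bfL p^\ast$, $\otimes=\otimes^{\bfL}$). The plan is to identify $\cD_1$ and $\cD_2$ with $D^b(C)$ via the fully faithful functors produced by Orlov's formula, so that $\lambda_1$ and $\rho_2$ become the left and right adjoints of these embeddings, and then to evaluate those adjoints by combining the standard adjunction $(p^\ast,\bfR p_\ast)$ with Serre--Grothendieck duality on $S$ and on $C$. Concretely, set $F_2\colon D^b(C)\to D^b(S)$, $F_2(A)=p^\ast A$, and $F_1\colon D^b(C)\to D^b(S)$, $F_1(A)=p^\ast A\otimes\cO_S(-C_0)$; each is fully faithful with essential image $\cD_2$, resp.\ $\cD_1$ ($F_2$ because $\bfR p_\ast\cO_S\cong\cO_C$ for a $\bP^1$-bundle together with the projection formula, and $F_1$ because it differs from $F_2$ by the autoequivalence $-\otimes\cO_S(-C_0)$; the subcategories are admissible, so the adjoints exist a priori). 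For $\rho_2$ this already finishes: since $\rho_2$ is the right adjoint of $\cD_2\hookrightarrow D^b(S)$, under the identification it is the right adjoint of $F_2=p^\ast$, which by $\rhom_S(p^\ast A,E)\cong\rhom_C(A,\bfR p_\ast E)$ is $\bfR p_\ast$; hence $\rho_2(E)=p^\ast\bfR p_\ast E$ as an object of $\cD_2\subset D^b(S)$, which is assertion (2).

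For (1) it suffices to compute the left adjoint of $F_1$. Since $F_1=(-\otimes\cO_S(-C_0))\circ p^\ast$ and $-\otimes\cO_S(-C_0)$ is an autoequivalence with inverse $-\otimes\cO_S(C_0)$, this left adjoint is $L\circ(-\otimes\cO_S(C_0))$, where $L$ is the left adjoint of $p^\ast$. To find $L$ I would run the following chain, for $A\in D^b(C)$ and $E\in D^b(S)$:
$$
\begin{aligned}
\rhom_S(E,p^\ast A)
&\cong\rhom_S\big(p^\ast A,\,E\otimes\omega_S[2]\big)^\vee
\cong\rhom_C\big(A,\,\bfR p_\ast(E\otimes\omega_S)[2]\big)^\vee\\
&\cong\rhom_C\big(\bfR p_\ast(E\otimes\omega_S)\otimes\omega_C^{-1}[1],\,A\big),
\end{aligned}
$$
using Serre duality on the surface $S$, then the adjunction $(p^\ast,\bfR p_\ast)$, then Serre duality on the curve $C$. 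By the projection formula $\bfR p_\ast(E\otimes\omega_S)\otimes\omega_C^{-1}\cong\bfR p_\ast(E\otimes\omega_{S/C})$ with $\omega_{S/C}:=\omega_S\otimes p^\ast\omega_C^{-1}$, so $L(E)=\bfR p_\ast(E\otimes\omega_{S/C})[1]$ — the relative Serre functor, exactly as predicted by Grothendieck--Verdier duality. Consequently the left adjoint of $F_1$ sends $E\mapsto\bfR p_\ast\big(E(C_0)\otimes\omega_{S/C}\big)[1]$, and post-composing with the embedding $F_1$ (because $\lambda_1$ by definition takes values in $\cD_1\subset D^b(S)$, not in $D^b(C)$) gives $\lambda_1(E)=p^\ast\big(\bfR p_\ast(E(C_0)\otimes\omega_{S/C})[1]\big)\otimes\cO_S(-C_0)$.

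It remains to put this into the divisor form of the statement. From $K_S=-2C_0+(2g(C)-2+\rdeg(\cE))f$ one reads off $E(C_0)\otimes\omega_S=E(-C_0+(2g(C)-2+\rdeg(\cE))f)$, and since $\rdeg\omega_C=2g(C)-2$ the factor $T_C=\omega_C^{-1}$ pulls back to $\cO_S(-(2g(C)-2)f)$; one more application of the projection formula then turns $\bfR p_\ast(E(C_0)\otimes\omega_{S/C})$ into $\bfR p_\ast\big(E(-C_0+(2g(C)-2+\rdeg(\cE))f)\big)\otimes T_C$, which is precisely the expression in (1). The only real difficulty is bookkeeping: carrying the shifts and twists correctly through the two invocations of Serre duality, identifying $\omega_{S/C}$ with $\cO_S(-2C_0+\rdeg(\cE)f)$, and not transposing the roles of "left/right adjoint" or the direction of an adjunction (an easy slip, since $\lambda_1$ is the left adjoint of the embedding of the right-hand factor $\cD_1$, while $\rho_2$ is the right adjoint of the embedding of the left-hand factor $\cD_2$). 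Everything else is formal.
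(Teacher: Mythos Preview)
Your proof is correct. The paper itself does not give a proof of this lemma: it is stated as a citation of \cite{Uch} Proposition 3.3, and the only indication of method is the remark preceding the lemma that ``by using the Grothendieck--Verdier duality, we can explicitly calculate these functors.'' Your argument makes this precise in exactly the expected way --- computing the left adjoint of $p^\ast$ by sandwiching the $(p^\ast,\bfR p_\ast)$ adjunction between Serre duality on $S$ and on $C$, which is Grothendieck--Verdier duality spelled out --- so your approach coincides with what the paper (and presumably \cite{Uch}) has in mind.
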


One of the main porposes of \cite{Uch} is describe the boundary of the geometric chamber of divisorial stability conditions, which we denote by $\partial \overline{S_{div}}$. This boundary is characterized by the concept of {\it gluing perversity}. 
\begin{definition}[\cite{Uch} Definition 3.6]
    Let $\sigma_{st} = (Z_{st},\cP_{st})$ be the standard stability condition on the base curve. Suppose that $\sigma_1 = (Z_1,\cP_1) \in \rStab(p^*D^b(C) \otimes \cO_S(-C_0))$ with $\cP_1(0) = p^*\cP_{st}(\phi_1) \otimes\cO_S(-C_0)$ and $\sigma_2 = (Z_2,\cP_2) \in \rStab(p^*D^b(C))$ with $\cP_2(0)=p^*\cP_{st}(\phi_2)$. Assume that $\sigma$ is a gluing pre-stability condition of $\sigma_1$ and $\sigma_2$. Then the {\it gluing perversity} defined to be $\rper(\sigma) = \phi_1-\phi_2$.
\end{definition}

The author proves that $\partial \overline{S_{div}}$ is precisely the set of gluing stability conditions with gluing perversity 1 (\cite{Uch} Lemma 3.10, Lemma 4.2, and Lemma 4.3). The author also described a matrix that transforms from gluing one to the boundary in the following theorem. A set $S_{gl,1}$ denotes the gluing stability conditions which has gluing perversity $1$.

\begin{thm}[\cite{Uch} Theorem 4.4]
    Let $S_{div}$ be the set of divisorial stability conditions on $S$. Suppose that $A=\begin{pmatrix}
        a & -\frac{1}{2}ea \\
        0 & a
    \end{pmatrix} \in \widetilde{GL^+(2,\bR)}$ with $a < 0$ and $-e = \rdeg \ \cE$. Then $\partial \overline{S_{div}} \cap S_{gl,1}$ is the set of $\widetilde{GL^+(2,\bR)}$-translates of a stability condition glued from $A.\sigma_1$ and $\sigma_2$.
\end{thm}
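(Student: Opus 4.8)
The plan is to refine the known inclusion $\partial\overline{S_{div}}\subseteq S_{gl,1}$ (\cite{Uch}, Lemmas 3.10, 4.2, 4.3) to the stated normal form by exploiting the $\widetilde{GL^+(2,\bR)}$-action. Write $\sigma_1,\sigma_2$ for the \emph{standard} induced stability conditions on $\cD_1=p^*D^b(C)\otimes\cO_S(-C_0)$ and $\cD_2=p^*D^b(C)$, i.e. those with $\cP_1(\phi)=p^*\cP_{st}(\phi)\otimes\cO_S(-C_0)$ and $\cP_2(\phi)=p^*\cP_{st}(\phi)$. First I would check that the $\widetilde{GL^+(2,\bR)}$-action on $\rStab(S)$ restricts, on the locus of gluing stability conditions, to a ``diagonal'' action through the semiorthogonal decomposition: for $g=(M,f)\in\widetilde{GL^+(2,\bR)}$, the glued central-charge formula of Definition~\ref{gluingstabilitycondition} gives $Z\circ M=(Z_1\circ M)\circ\lambda_1+(Z_2\circ M)\circ\rho_2$, and because $f$ reparametrizes all phases uniformly the glued-heart description of Definition~\ref{gluingstabilitycondition} and the gluing condition~\eqref{gleq1} (which only involves $\rhom^{\leq 0}$) are preserved; hence $g.\sigma_{gl}(\tau_1,\tau_2)=\sigma_{gl}(g.\tau_1,g.\tau_2)$ and $\rper(g.\sigma)=\rper(\sigma)$. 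In particular $\widetilde{GL^+(2,\bR)}$ preserves $S_{gl,1}$.

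Since $g(C)>0$, $\rStab(C)\cong\widetilde{GL^+(2,\bR)}$ is a single orbit of $\sigma_{st}$, so by~\eqref{IndStabD1}, \eqref{IndStabD2} every constituent of a gluing stability condition is a $\widetilde{GL^+(2,\bR)}$-translate of $\sigma_1$ or of $\sigma_2$. Given $\sigma\in\partial\overline{S_{div}}\cap S_{gl,1}$, act by the diagonal action to normalize the $\cD_2$-constituent to be $\sigma_2$ itself, so that $\phi_2=0$, $\phi_1=\rper(\sigma)=1$, and the $\cD_1$-constituent equals $M_1.\sigma_1$. The condition $\phi_1=1$ forces the first column of $M_1$ to lie in the ray $\bR_{<0}\cdot(1,0)$ and $\det M_1>0$, so $M_1=\left(\begin{smallmatrix} a & b\\ 0 & d\end{smallmatrix}\right)$ with $a,d<0$; for every such $M_1$ the glued heart is computed to be $\langle\, p^*\rCoh(C)\otimes\cO_S(-C_0)[1],\ p^*\rCoh(C)\,\rangle$, independently of $(a,b,d)$, and \eqref{gleq1} holds for it because $\rhom^{k}_{D^b(S)}(p^*G_1\otimes\cO_S(-C_0),p^*G_2)=0$ for $k\leq -1$ and $G_i\in\rCoh(C)$ (adjunction and the projection formula, using $\dim C=1$), on top of the semiorthogonality $\rhom_{D^b(S)}(p^*G_2,p^*G_1\otimes\cO_S(-C_0))=0$.

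It then remains to single out, among these perversity-$1$ gluings, the divisorial ones. Here I would compute $Z_{gl}=(M_1 Z_{st})\circ F_1^{-1}\circ\lambda_1+Z_{st}\circ F_2^{-1}\circ\rho_2$ explicitly on a $\bZ$-basis of $K_{num}(S)$ — say the classes of $\cO_S$, of a fibre, of $C_0$, and of $\cO_x$ — using the formulas of Lemma~\ref{Uch.Proposition3.3} for $\lambda_1,\rho_2$ and the identifications~\eqref{isomofGrothendieckgroups}, tracking the twists by $\cO_S(-C_0)$, by $(2g(C)-2+\rdeg\cE)f$ and by $T_C$; the crucial numerical input is $\rch_2(\cO_S(C_0))=\tfrac12 C_0^2=-\tfrac e2$, which is where the off-diagonal entry of $A$ originates. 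Comparing $Z_{gl}$ with the central charges of divisorial stability conditions and with their limits as $(\beta,\omega)$ degenerates onto $\partial\overline{S_{div}}$ (\S 4), one reads off that the resulting gluing stability condition lies in $\overline{S_{div}}$ exactly when $b=-\tfrac12 ea$ and $d=a$, i.e. $M_1=A$. Conversely, for every $a<0$ the gluing $\sigma_{gl}(A.\sigma_1,\sigma_2)$ has $\rper=1$ by construction, is a genuine gluing stability condition by the previous paragraph, and is realized as a limit of divisorial stability conditions, hence lies in $\partial\overline{S_{div}}$; together with the diagonal $\widetilde{GL^+(2,\bR)}$-action this gives the asserted description.

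I expect the main obstacle to be the central-charge matching in the last paragraph: it requires a faithful computation of $Z_{gl}$ through the adjoints $\lambda_1,\rho_2$ — disentangling the several twists of Lemma~\ref{Uch.Proposition3.3} together with~\eqref{isomofGrothendieckgroups} — and a sufficiently explicit hold on $\overline{S_{div}}$ and its boundary (\S 4) to certify that the residual parameters are cut down to exactly $b=-\tfrac12 ea$, $d=a$, and not to a larger family. A secondary, more routine point, to be dispatched first, is to justify that the $\widetilde{GL^+(2,\bR)}$-action genuinely carries gluing stability conditions to gluing stability conditions, for which one uses that the glued slicing of \cite{CP} is determined by the glued central charge together with the glued heart and that $\lambda_1,\rho_2$ are insensitive to uniform phase shifts.
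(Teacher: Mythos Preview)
This theorem is not proved in the paper. It appears in Section~2.7 as part of a review of background results from \cite{Uch}, and the paper simply cites it without argument; there is therefore no proof here against which to compare your proposal.

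That said, your outline is broadly consonant with the method the paper employs in Section~\ref{Relationtodivisorialstability} for its own analogous results in the Hirzebruch-surface setting (Theorems~\ref{thm.boundaryofW01}, \ref{thm.boundaryofW02}, \ref{thm.m=3}): there the author computes the Mukai-pairing vector $\pi(\sigma)$ of the glued central charge explicitly (Propositions~\ref{cofficientsofpisigma1}, \ref{cofficientsofpisigma2}, \ref{cofficientsofpisigma3}) and then reads off the boundary conditions of $S_{div}$ via equations~\eqref{eq.z}, \eqref{eq.w} and Lemma~\ref{lem.condw>ze}. Your final paragraph --- compute $Z_{gl}$ on a basis of $K_{num}(S)$ and match against divisorial central charges --- is exactly this approach, and indeed the paper records the outcome of that very computation (in the $g(C)>0$ case, with $\sigma_2$ normalized) as Lemma~\ref{Uch.prop3.5}, again citing \cite{Uch}. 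The normalization step you propose via the diagonal $\widetilde{GL^+(2,\bR)}$-action, exploiting that $\rStab(C)$ is a single orbit when $g(C)>0$, is the natural reduction and is presumably how \cite{Uch} itself proceeds; it is precisely the step that is \emph{not} available in this paper's own setting, since for $g(C)=0$ the space $\rStab(\bP^1)$ has additional quiver components.
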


However, as previously explained, when the genus of base curve is zero, the stability space $\rStab(C)$ has a different structure from the positive genus case due to the existence of Beillinson's exceptional collection. To resolve this problem, we will use a slightly different method in \S 3 and \S 4.

\section{Construction}
In this section, we construct the gluing stability condition where the base curve has genus zero. Let $p : \Sigma_e \to \bP^1$ denote a Hirzebruch surface.  We keep the notation in the previous section and fix a semiorthogonal decomposition
\begin{equation}
    D^b(\hir) = \langle p^*D^b(\bP^1) \otimes \cO_{\hir}(-C_0), p^*D^b(\bP^1) \rangle. \label{SOD1} \\
\end{equation}

\subsection{Gluing construction on $D^b(\Sigma_e)$}
First, we examine the gluing construction of pullbacks of hearts on $D^b(\bP^1)$. According to \cite{Oka05},  there are two types of hearts of bounded t-structures that admit a stability function:
\begin{itemize}
    \item $\rCoh(\bP^1)[j]$, 
    \item $\langle \cO(k-1)[j+p],\cO(k)[j]\rangle$ 
\end{itemize}
for $k,j \in \bZ$ and $p > 0$.We will focus on the case where $p=1$, and denote the heart $\langle \cO(k-1)[j+1],\cO(k)[j]\rangle$ by $\cA(k)[j]$. The functor $p^*$ and $- \otimes \cO_{\hir}(-C_0)$ preserve the structure of the heart of a bounded t-structure (\cite{Uch} \S 3). Thus, the glued heart can be formed in one of the following four ways:
\begin{enumerate} \label{gluableheart}
    \item $\cA_1[j_1] = p^*\rCoh(\bP^1)[j_1] \otimes \cO_{\hir}(-C_0)$ and $\cA_2[j_2] = p^*\cA(k)[j_2]$
    \item $\cA_1[j_1] = p^*\cA(k)[j_1] \otimes \cO_{\hir}(-C_0)$ and $\cA_2[j_2] = p^*\rCoh(\bP^1)[j_2]$
    \item $\cA_1[j_1] = p^*\cA(k)[j_1] \otimes \cO_{\hir}(-C_0)$ and $\cA_2[j_2] = p^*\cA(k')[j_2]$
    \item $\cA_1[j_1] = p^*\rCoh(\bP^1)[j_1] \otimes \cO_{\hir}(-C_0)$ and $\cA_2[j_2] = p^*\rCoh(\bP^1)[j_2]$.
\end{enumerate}
We determine the conditions under which the gluing condition \eqref{gleq1} is satisfied.
\begin{proposition} \label{Gluingheart}
    In all cases of above, $\cA_1[j_1]$ and $\cA_2[j_2]$ satisfiy the gluing condition \eqref{gleq1} if and only if $j_1 > j_2$.
    \begin{proof}
        Since $p^*$ is fully faithful, the projection formula yields the following isomorphism for $E_1 \in \cA_1$ and $E_2 \in \cA_2$:
        \begin{equation} \label{projectionformula1}            
            \rhom_{D^b(\hir)}(p^*E_1 \otimes\cO_{\hir} (-C_0), p^* E_2))  \cong \rhom_{D^b(\bP^1)}(E_1,E_2 \otimes (\cO\oplus \cO(-e))).
        \end{equation}
        Therefore, the problem reduces to checking the vanishing of $\rhom^{i\leq 0}_{D^b(\bP^1)}$ of the corresponding objects of $D^b(\bP^1)$. Any object $E \in \cA(k)[j]$ is quasi-isomorphic to a complex of vector bundles
        $$
            \cdots \to 0 \to \cO(k-1)^{\oplus n_{j-1}} \to \cO(k)^{\oplus n_{j}} \to 0 \to \cdots
        $$
        which concentrated in degrees $j-1$ and $j$ in $D^b(\bP^1)$. On the otherhands, any object of $\rCoh(\bP^1)[j']$ is quasi-isomorphic to a complex
        \begin{equation}
           E^\bullet:= \cdots \to 0 \to \cO(m_{j'-1})^{\oplus n_{j'-1}} \to \cO(m_{j'})^{\oplus n_{j'}} \to 0 \to \cdots
        \end{equation}
        concentrated in $j'-1$ and $j'$ by the Grothendieck-Birkoff's theorem. 
        
        For the nessesity of (1), we assume that $j_1 > j_2$. By the definition of derived hom, it is sufficient to show that the condition
        \begin{equation}            
            \bigoplus_{i \leq 0} \rExt^i (\cO(k-1)[j_1+1],E^\bullet[j_2]) = \bigoplus_{i \leq 0} \rExt^i (\cO(k)[j_1],E^\bullet[j_2]) = 0. \label{eq.extofP1}
        \end{equation}
        Notice that in general, $\rExt^i(A[j],B[j']) = \rExt^{i+j'-j}(A,B)$ holds. For the right hand side of \eqref{eq.extofP1}, the condition $i \leq 0$ combined with our assumption $j_1>j_2$ ensures that the degree $i+j_2-j_1$ is always negative. Since the homological dimension of $D^b(\bP^1)$ is $1$, any Ext-group with a degree other than $0$ or $1$ must be zero. Therefore, the required Ext-groups vanish. For the left hand side of \eqref{eq.extofP1}, the relevant Ext-group has degree $(i+j_2-j_1)-1$. This degree is therefore also always negative under the same assumption. Thus the relevant Ext-groups also vanish and proving the necessity of (1). By changing the assumptions of $\cA_1$ and $\cA_2$, and performing the same argument, we obtain the necessity of (2), (3), (4). 
        
        In order to prove of sufficiently condition, we must show that the aforementioned Ext-groups \eqref{eq.extofP1} do not vanish when $j_2 \geq j_1$. In (1), (2), and (4), we can take $m_{j'-1}=k-1, m_{j'}=k$ for the terms of $E^\bullet$. Then, clearly relevant Ext-groups does not vanish. To show (3), we set $E^\bullet$ to be a complex  
        $$
            \cdots \to 0 \to \cO(k'-1)^{\oplus n'_{j'-1}} \to \cO(k')^{\oplus n'_{j'}} \to 0 \to \cdots
        $$
        whose terms on degrees $j_2-1$ and $j_2$. From the above, we have only non-trivial $\rExt$-groups 
        \begin{equation}
            \rExt^{i+j_2-(j_1+1)}(\cO(k-e-1),\cF) \quad and  \quad \rExt^{i+j_2-j_1}(\cO(k-e),\cF) \label{eq.prop3.1-1}
        \end{equation}
        when $i+j_2-j_1=0,1$ where $\cF \in \{ \cO(k'-1)^{\oplus n'_{j_2-1}}, \cO(k')^{\oplus n'_{j_2}}\}$. Thus, it suffices to show that these do not vanish simultaneously when $j_2 \geq j_1$. Set $E = \cO(k')$, and assume that $\rExt^0(\cO(k-e-1),\cO(k'))$ and $\rExt^1(\cO(k-e),\cO(k'))$ are vanish simultaneously. Then it must be $k-e-1 > k'$ from the left term. On the otherhands, by applying the Serre duality to the right term, we have $\rExt^1(\cO(k-e),\cO(k')) = \rExt^0(\cO(k'),\cO(k-e-2))^\vee$. From this, it must hold $k' > k-e-2$. Combining these, we obtain an inequality $k-e-1 > k' > k-e-2$, but no such an integer $k'$ exist. This is contradiction. The same argument leads to a contradiction in the case of $E=\cO(k'-1)$. Thus (3) holds true.
\end{proof}
\end{proposition}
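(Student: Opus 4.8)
The plan is to reduce the gluing condition \eqref{gleq1} for $\cA_1[j_1]$ and $\cA_2[j_2]$ to $\rExt$-computations among line bundles on $\bP^1$, and then treat the two implications separately. For the reduction I would combine full faithfulness of $p^*$, the adjunction $p^*\dashv\bfR p_*$, the projection formula, and the identification $\bfR p_*\cO_{\hir}(C_0)\cong\cO\oplus\cO(-e)$ to get, for $E_1$ in the $\bP^1$-heart underlying $\cA_1$ and $E_2$ in the one underlying $\cA_2$,
$$
\rhom_{D^b(\hir)}\!\bigl(p^*E_1\otimes\cO_{\hir}(-C_0),\,p^*E_2\bigr)\;\cong\;\rhom_{D^b(\bP^1)}(E_1,E_2)\;\oplus\;\rhom_{D^b(\bP^1)}\!\bigl(E_1,E_2\otimes\cO(-e)\bigr),
$$
which is \eqref{projectionformula1}; hence \eqref{gleq1} for the shifted hearts amounts to the vanishing of $\rhom^{\leq0}_{D^b(\bP^1)}(E_1,E_2\otimes\cO(-\epsilon))$ for $\epsilon\in\{0,e\}$ and all admissible $E_1,E_2$. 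The only facts about $D^b(\bP^1)$ I would use are: it has homological dimension one; $\rExt^0(\cO(a),\cO(b))=H^0(\cO(b-a))$ is nonzero iff $b\geq a$; and $\rExt^1(\cO(a),\cO(b))\cong H^0(\cO(a-b-2))^{\vee}$ (Serre duality) is nonzero iff $b\leq a-2$; all other $\rExt$-groups vanish.

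For the implication $j_1>j_2\Rightarrow\eqref{gleq1}$, I would represent any object of either kind of $\bP^1$-heart — $\rCoh(\bP^1)$, or a quiver heart $\cA(k)=\langle\cO(k-1)[1],\cO(k)\rangle$ — by a two-term complex of sums of line bundles supported in two consecutive cohomological degrees positioned according to $j_1$ (resp.\ $j_2$), a shape that $\otimes\cO$ and $\otimes\cO(-e)$ preserve, and then compute $\rhom_{D^b(\bP^1)}$ termwise. The hypothesis $j_1>j_2$ separates the cohomological supports enough that every $\rExt$-group contributing to $\rhom^{\leq0}$ is forced into a degree outside $\{0,1\}$, where it vanishes by homological dimension one; this computation is uniform over all four cases $m=1,2,3,4$.

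For the converse I would show that when $j_2\geq j_1$ one can always exhibit $E_1,E_2$ with $\rhom^{\leq0}\neq0$. In the cases $m=1,2,4$ at least one of the two $\bP^1$-hearts is $\rCoh(\bP^1)$, which contains line bundles of every degree, so one can pick degrees making $\rhom^0_{D^b(\bP^1)}(E_1,E_2)$ nonzero (taking $E_1$, $E_2$ to be a line bundle or a generator of the respective heart); because $j_2\geq j_1$ this class sits in degree $j_1-j_2\leq0$ of the shifted objects. The real obstacle is $m=3$, where both hearts are quiver hearts $\cA(k)$, $\cA(k')$ with $k,k'$ fixed, so that no section can be manufactured by choosing degrees; there I would take $E_1=\cO(k)[j_1]$ and exploit the splitting $\cE\cong\cO\oplus\cO(-e)$ together with a dichotomy on the sign of $k'-k-e$: if $k'\geq k+e$, then $E_2=\cO(k')[j_2]$ gives a nonzero $\rExt^0(\cO(k),\cO(k'-e))=H^0(\cO(k'-e-k))$; if $k'\leq k+e-1$, then the other generator $E_2=\cO(k'-1)[j_2+1]$ gives a nonzero $\rExt^1(\cO(k),\cO(k'-1-e))\cong H^0(\cO(k+e-k'-1))^{\vee}$; either way the class lies in total degree $j_1-j_2\leq0$, so \eqref{gleq1} fails. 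This dichotomy — needed precisely because the line bundles in the quiver hearts are rigid — is the one step beyond routine bookkeeping.
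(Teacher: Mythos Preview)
Your proposal is correct and follows essentially the same approach as the paper: reduce via the projection formula to $\rExt$-computations between line bundles on $\bP^1$, represent objects of both kinds of heart as two-term complexes of line bundles, use homological dimension one for the forward implication, and exhibit explicit nonvanishing for the converse. The only cosmetic difference is in case $m=3$ of the converse: the paper argues by contradiction (assuming the relevant $\rExt^0$ and $\rExt^1$ vanish simultaneously and squeezing $k'$ into an empty interval of integers), whereas you give a direct dichotomy on $k'-k-e$; the two are the same trichotomy for line bundles on $\bP^1$ read in opposite directions.
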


\begin{proposition}
    Let $\cA(k)[j] = \langle \cO(k-1)[j+1], \cO(k)[j] \rangle$ be a heart of bounded t-structure of Ext-exceptional collection $\{ \cO(k-1)[j+1], \cO(k)[j] \}$ in $D^b(\bP^1)$. Then $\cA(k)[j]$ includes $\bigoplus_j  \cO(n_j)[j+1]^{\oplus m_j}$ for all $n_j < k-1$ and $m_j \geq 0$. Similaly, $\cA(k)[j]$ includes $\bigoplus_j  \cO(n_j)[j]^{\oplus m_j}$ for all $n_j > k$ and $m_j \geq 0$. Moreover the skyscraper sheaf $\cO_x$ is included in $\cA(k)[j]$.
    \begin{proof}
        First, assume that $\cA(k)[1] = \langle \cO(k-1)[1], \cO(k) \rangle$. From \cite{Oka05} Lemma 3.1(2), 
        for all $n < k-1$ there are triangle in $D^b(\bP^1)$
        $$
            \cO(k)^{\oplus k-1-n}[-1] \to \cO(n) \to \cO(k-1)^{\oplus k-n}.
        $$
        After the shifts, we obtain
        \begin{equation}         
            \cO(k)^{\oplus i-1-n} \to \cO(n)[1] \to \cO(k-1)[1]^{\oplus i-n}. \label{trignale1}
        \end{equation}
        Obviously, the left and right terms of this triangle are the object of $\cA(k)[1]$. Hence by the extension clossedness of $\cA(k)[1]$, the middle term also included in $\cA(k)[1]$.

        Assume that $N = \{ n \mid n < k-1 \}$ and take $n_1,n_2 \in N$. Clearly, there exists a triangle
        $$
            \cO(n_1)[1] \to \cO(n_1)[1] \oplus \cO(n_2)[1] \to \cO(n_2)[1].
        $$
        By the above arguments, the left and right terms of above triangle are the object of $\cA(k)[1]$. Hence $(\cO(n_1) \oplus \cO(n_2))[1] \in \cA(k)[1]$. Inductively, $\bigoplus_j  \cO(n_j)[1]^{\oplus m_j}$, $n_j \in N$ is an object of $\cA(k)[1]$ for $m_j \geq 0$. Obviously, the similar argument holds for $\cA(k)[j]$ by applied the $j$-times shift to a triangle \eqref{trignale1}.

        The second statement is holds by applying the same argument as above to the following triangle obtained by \cite{Oka05} Lemma 3.1 (1) : for all $n > k$, there are triangle
        $$
            \cO(k)^{\oplus n-k+1} \to \cO(n) \to \cO(k-1)^{\oplus n-k}[1].
        $$

        The last statement is established by triangle 
        \begin{equation} \label{eq.Oka05Lemma3.1(3)}            
            \cO(k) \to \cO_x \to \cO(k-1)[1]
        \end{equation}
        in \cite{Oka05} Lemma 3.1 (3).
    \end{proof}
\end{proposition}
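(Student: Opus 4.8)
The plan is to reduce everything to the case $j=0$ and then build the required objects out of the two generators $\cO(k)$ and $\cO(k-1)[1]$ of $\cA(k):=\cA(k)[0]=\langle\cO(k-1)[1],\cO(k)\rangle$ by repeated use of extension-closedness of a heart. Since the shift $[j]$ is an autoequivalence of $D^b(\bP^1)$ that carries $\cA(k)$ bijectively onto $\cA(k)[j]$, it suffices to establish each of the three assertions for $j=0$ and then apply $[j]$.

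First I would exhibit, for every $n<k-1$, a distinguished triangle presenting $\cO(n)[1]$ as an extension inside $\cA(k)$. Lemma~\ref{lemOka3.1}(1), read for the exceptional pair $\{\cO(k-1),\cO(k)\}$, provides an exact triangle $\cO(k)^{\oplus(k-1-n)}[-1]\to\cO(n)\to\cO(k-1)^{\oplus(k-n)}$; shifting by $[1]$ gives $\cO(k)^{\oplus(k-1-n)}\to\cO(n)[1]\to\cO(k-1)[1]^{\oplus(k-n)}$, whose outer terms are finite direct sums of the two generators of $\cA(k)$, hence lie in $\cA(k)$, so $\cO(n)[1]\in\cA(k)$. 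The companion triangle from Lemma~\ref{lemOka3.1}(1) gives, for $n>k$, a triangle $\cO(k)^{\oplus(n-k+1)}\to\cO(n)\to\cO(k-1)^{\oplus(n-k)}[1]$ with both ends again in $\cA(k)$, so $\cO(n)\in\cA(k)$.

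Passing to direct sums is then a short induction on the number of summands, using the split triangle $\cO(n_1)[1]\to\cO(n_1)[1]\oplus\cO(n_2)[1]\to\cO(n_2)[1]$ and its obvious multi-summand analogue: any $\bigoplus_i\cO(n_i)[1]^{\oplus m_i}$ with all $n_i<k-1$ lies in $\cA(k)$, and likewise $\bigoplus_i\cO(n_i)^{\oplus m_i}$ with all $n_i>k$. For the skyscraper sheaf I would use that the twisted Koszul resolution $0\to\cO(k-1)\to\cO(k)\to\cO_x\to0$ rotates to the triangle $\cO(k)\to\cO_x\to\cO(k-1)[1]$ of Lemma~\ref{lemOka3.1}, whose outer terms are exactly the two generators of $\cA(k)$, so $\cO_x\in\cA(k)$. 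Applying $[j]$ to all of these inclusions yields the three statements for $\cA(k)[j]$.

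The argument is essentially bookkeeping, so the only point that will demand care is the index matching: Okada's triangles are written for a pair $\{\cO(k),\cO(k+1)\}$, so one must read them for our pair $\{\cO(k-1),\cO(k)\}$ and track the resulting shift in exponents, and one must check that after the shift by $[1]$ the outer terms genuinely lie in the extension-closure $\langle\cO(k-1)[1],\cO(k)\rangle$ and not merely in $D^b(\bP^1)$. Beyond extension-closedness of a heart and the one-dimensionality of $D^b(\bP^1)$ (which is what makes Okada's list of triangles complete), nothing deeper is involved.
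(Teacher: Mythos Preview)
Your proposal is correct and follows essentially the same route as the paper: reduce to a fixed shift, use Okada's exact triangles (Lemma~\ref{lemOka3.1}) to exhibit each $\cO(n)[1]$ (for $n<k-1$), each $\cO(n)$ (for $n>k$), and $\cO_x$ as an extension of finite direct sums of the two generators of $\cA(k)$, and then handle arbitrary direct sums by the split-triangle induction. Your remark about index matching when translating Okada's triangles to the pair $\{\cO(k-1),\cO(k)\}$ is well taken and is exactly the only point requiring care.
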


 We use the following four type notations.
\begin{definition} \label{typeofgluingstab}
    Let $\sigma_{gl} = (Z_{gl}, \cA_{gl})$ be a gluing stability condition on $D^b(\hir)$ such that $\cA_{gl}$ is a glued heart constructed in Definition \eqref{gluableheart} and $Z_{gl}$ is a stability function on $\cA_{gl}$ defined in Definition \ref{gluingstabilitycondition} \eqref{glfunc}. Then we use the following notation:
    \begin{enumerate} 
        \item $\sigma_{gl,1}$ if $\sigma_{gl}$ glued from $p^*\rCoh(\bP^1)[j_1] \otimes \cO_{\hir}(-C_0)$ and $p^*\cA(k)[j_2]$ \label{gl1}
        \item $\sigma_{gl,2}$ if $\sigma_{gl}$ glued from $p^*\cA(k)[j_1] \otimes \cO_{\hir}(-C_0)$ and $p^*\rCoh(\bP^1)[j_2]$ \label{gl2}
        \item $\sigma_{gl,3}$ if $\sigma_{gl}$ glued from $p^*\cA(k)[j_1] \otimes \cO_{\hir}(-C_0)$ and $p^*\cA(k')[j_2]$ \label{gl3}
        \item $\sigma_{gl,4}$ if $\sigma_{gl}$ glued from $p^*\rCoh(\bP^1)[j_1] \otimes \cO_{\hir}(-C_0)$ and $p^*\rCoh(\bP^1)[j_2]$ \label{gl4}
    \end{enumerate}
    We call $\sigma_{gl,m}$ the {\it glued type $m$}.
\end{definition}

Next, we observe the stability functions on the heart of a bounded t-structure which constructed in above. The derived category $D^b(\bP^1)$ has the following two types of stability conditions:
\begin{itemize}
    \item ({\bf Standard stability condition}) 
    \begin{equation}
        \sigma_{st} := (Z_{st}, \rCoh(\bP^1)), \label{standardstabilitycondition}
    \end{equation} where the stability function $Z_{st}$ is given by $-\rdeg + i \rrank$. \vspace{1mm}
    \item  ({\bf Quiver stability condition}) 
    \begin{equation}
        \sigma_{k,\zeta_0,\zeta_1} := (Z_{\zeta_0,\zeta_1},\cA(k)[j]), \label{algebraicstabilitycondition}
    \end{equation} where the stability function $Z_{\zeta_0,\zeta_1} : K(\bP^1) \to \bC$ is defined by fixing two complex numbers $\zeta_0,\zeta_1 \in \bH$ and setting: 
    \begin{equation}    
          Z_{\zeta_0,\zeta_1}(\cO(k-1)[j+1]) = \zeta_0, \quad Z_{\zeta_0,\zeta_1}(\cO(k)[j]) = \zeta_1.  \label{eq.alg.stab.func.}
    \end{equation}
\end{itemize}
\hspace{1mm}

Since $\cA(k)[j]$ generated by the Ext-exceptional collection, $\sigma_{k,\zeta_0,\zeta_1}$ is a stability condition on $D^b(\bP^1)$. Moreover, the generators $\cO(k-1)[j+1]$ and $\cO(k)[j]$ are $\sigma_{k,\zeta_0,\zeta_1}$-stable (\cite{Mac07}). From the construction \eqref{IndStabD1} and \eqref{IndStabD2}, these stability conditions can be pulled back to induce stability conditions on the components of the semiorthogonal decomposition of $D^b(\Sigma_e)$.

\begin{lemma} \label{lambda1rho2}
    For any $E \in D^b(\hir)$, the adjoint functors $\lambda_1 : D^b(\hir) \to p^*D^b(\bP^1) \otimes \cO_{\hir}(-C_0)$ and $\rho_2 : D^b(\hir) \to p^*D^b(\bP^1)$ can be written as follows:
    \begin{enumerate}
        \item $\lambda_1(E) = p^*(\bfR p_* E(-C_0 +(-e-2)f) \otimes \cO(2)) \otimes \cO_{\hir}(-C_0) [1]$,
        \item $\rho_2(E) = p^*\bfR p_*E$
    \end{enumerate}
    where $e$ is the degree of $\Sigma_e$.
    \begin{proof}
        Applying \cite{Uch} Lemma 3.3 to the case where $g(C)=0$.    \end{proof}
\end{lemma}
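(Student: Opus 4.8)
The plan is to obtain both formulas by specializing Lemma~\ref{Uch.Proposition3.3} (i.e.\ \cite{Uch} Proposition~3.3) to the genus-zero base curve $C = \bP^1$, and then to simplify the auxiliary data appearing there. For $C = \bP^1$ one has $g(C) = 0$, so $2g(C) - 2 = -2$; the bundle $\cE = \cO \oplus \cO(-e)$ with $\hir = \bP(\cE)$ has $\rdeg(\cE) = -e$, whence $2g(C) - 2 + \rdeg(\cE) = -e - 2$; and the tangent bundle is $T_{\bP^1} \cong \cO(2)$. Substituting these into Lemma~\ref{Uch.Proposition3.3} gives
\[
    \lambda_1(E) = p^*\!\Bigl(\bigl(\bfR p_* E(-C_0 + (-e-2)f) \otimes \cO(2)\bigr)[1]\Bigr) \otimes \cO_{\hir}(-C_0), \qquad \rho_2(E) = p^* \bfR p_* E .
\]
Since $p^*$ and $- \otimes \cO_{\hir}(-C_0)$ are exact functors they commute with the translation $[1]$, so the shift can be moved to the outside, producing exactly the expressions in (1) and (2).

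If a self-contained derivation is wanted in place of the citation, I would argue directly. The functor $\rho_2$ is by definition right adjoint to the inclusion $\cD_2 = p^* D^b(\bP^1) \hookrightarrow D^b(\hir)$; because $p$ is a $\bP^1$-bundle, $\bfR p_* \cO_{\hir} = \cO_{\bP^1}$, so $p^*$ is fully faithful with right adjoint $\bfR p_*$, and post-composing $\bfR p_* : D^b(\hir) \to D^b(\bP^1)$ with $p^*$ lands back in $\cD_2$; hence $\rho_2(E) = p^* \bfR p_* E$. For $\lambda_1$, write the inclusion $\cD_1 \hookrightarrow D^b(\hir)$ as $\iota_1(-) = p^*(-) \otimes \cO_{\hir}(-C_0)$. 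As $\iota_1$ is fully faithful, its left adjoint has the form $\iota_1 \circ \lambda_1'$, where $\lambda_1'(G) = \bfL p_!\bigl(G \otimes \cO_{\hir}(C_0)\bigr)$ and $\bfL p_!$ is the left adjoint of $p^*$. Grothendieck--Verdier duality yields $\bfL p_!(-) = \bfR p_*\bigl(- \otimes \omega_p\bigr)[1]$, where $\omega_p = \omega_{\hir} \otimes p^* \omega_{\bP^1}^{-1}$ is the relative dualizing sheaf; using the canonical bundle formula $K_{\hir} = -2C_0 + (2g - 2 + \rdeg \cE)f = -2C_0 + (-e-2)f$ together with $\omega_{\bP^1}^{-1} \cong \cO(2)$ gives $\omega_p \cong \cO_{\hir}(-2C_0 + (-e-2)f) \otimes p^*\cO(2)$. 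Substituting and applying the projection formula to move the twists through $\bfR p_*$ reproduces the formula in (1).

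I do not expect any genuine obstacle here; the proof is a routine specialization. The only points that need care are the bookkeeping ones: identifying the relative dualizing sheaf of $\hir \to \bP^1$ correctly through the canonical bundle formula (equivalently, tracking the factors $T_C$ and $(2g-2+\rdeg\cE)f$ in Lemma~\ref{Uch.Proposition3.3} under the substitution $g = 0$, $\rdeg\cE = -e$); keeping track of the single shift $[1]$, which as noted commutes freely past the exact functors $p^*$ and $- \otimes \cO_{\hir}(-C_0)$; and using the projection formula $\bfR p_*(E \otimes p^*\cL) \cong \bfR p_* E \otimes \cL$ to place the twist by $\cO(2)$ wherever is convenient.
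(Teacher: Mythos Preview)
Your proposal is correct and matches the paper's approach exactly: the paper's proof is the single line ``Applying \cite{Uch} Lemma 3.3 to the case where $g(C)=0$,'' and you have carried out precisely that specialization (with $2g-2=-2$, $\rdeg\cE=-e$, $T_{\bP^1}\cong\cO(2)$), together with the harmless observation that the shift $[1]$ commutes past the exact functors. Your optional self-contained derivation via Grothendieck--Verdier duality is additional but not needed.
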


\begin{remark}\label{rmk.degreeofshiftofgluedheart}
    Notice that, $\lambda_1$ and $\rho_2$ determines the degree of shift of the glued heart. Namely, if $\cA_{gl}$ glued from $\cA_1[j_1]$ and $\cA_2[j_2]$ in \eqref{typeofgluingstab}, then $j_1 = j_2+1$ holds. Indeed, the difference of the shift degrees of $\lambda_1$ and $\rho_2$ are $1$ by the Grothendieck-Verdier duality. 
\end{remark}

To compute the glued stability function $Z_{gl} = Z_1 \circ \lambda_1 + Z_2 \circ \rho_2$  \eqref{glfunc}, we need to determine the rank and degree of the images of $\lambda_1$ and $\rho_2$.
\begin{proposition} \label{Prop_rankdegofadjointfunctor}
    Let $\lambda_1$ and $\rho_2$ are the functors which constructed in the lemma \ref{lambda1rho2}. We write $\rch(E) = (r,c_1,\rch_2)$ for the Chern class of $E \in D^b(\hir)$. Then
    $$
        \rch_0(\lambda_1(E)) = -c_1.f, \quad \rch_1(\lambda_1(E)) = - \rch_2 +\frac{1}{2}e c_1.f.
    $$
    Simirarly, 
    $$
        \rch_0(\rho_2(E)) = c_1.f+r, \quad \rch_1(\rho_2(E)) = \rch_2+c_1.C_0+\frac{1}{2}ec_1.f.
    $$
    \begin{proof}
        All of these are obtained by simple calculation of Chern characters. Notice that, $C_0^2=-e, C_0.f = 1$, and $f^2=0$ by the intersection theory. Since a functor $p^*$ and $-\otimes \cO_{\hir}(-C_0)$ induces an isomorphism of Grothendieck groups $K(D^b(\bP^1)) \cong K(p^*D^b(\bP^1) \otimes \cO_{\hir}(-C_0))$ (\cite{Uch} \S 3), we can reduce to caluculation of $\rch(Rp_*E(-C_0+(-e-2)f) \otimes \cO_{\bP^1}(2))[1]$. By the Grothendieck-Riemann-Roch formula, we have
        \begin{eqnarray*}
          \rch(\lambda_1(E)) &=& - \left( \frac{p_*(\rch(E)\rch(D)\rtd(\hir))}{\rtd(\bP^1)} \right) \rch(2H) \\ 
          &=& (-c_1.f,  - \rch2 + \frac{1}{2}ec1.f).
        \end{eqnarray*}
        where $H$ is an ample divisor on $\bP^1$ and $D = -C_0+(-e-2)f$. Similarly, we can calculate $\rch(\rho_2(E))$ as 
        \begin{eqnarray*}
          \rch(\rho_2(E)) &=& \frac{p_*(\rch(E)\rtd(\hir))}{\rtd(\bP^1)} \\ 
          &=& (c_1.f+r, \rch_2 + c_1.C_0+\frac{1}{2}ec_1.f).
        \end{eqnarray*}
    \end{proof}
\end{proposition}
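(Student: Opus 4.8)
The plan is to reduce the whole statement to a Chern-character computation on $\mathbb{P}^1$ and then invoke Grothendieck--Riemann--Roch for the projection $p:\Sigma_e\to\mathbb{P}^1$. By Lemma~\ref{lambda1rho2} together with the Grothendieck-group isomorphisms \eqref{isomofGrothendieckgroups}, the class of $\lambda_1(E)$ in $K(\mathcal{D}_1)$ corresponds under $F_1^{-1}$ to the class of $\mathbf{R}p_*\!\big(E(-C_0+(-e-2)f)\otimes\mathcal{O}_{\mathbb{P}^1}(2)\big)[1]$ in $K(D^b(\mathbb{P}^1))$, and the class of $\rho_2(E)$ corresponds under $F_2^{-1}$ to that of $\mathbf{R}p_*E$. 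Since $F_1,F_2$ are induced by exact functors, they commute with the shift, so it suffices to compute the rank and first Chern character of these two objects on $\mathbb{P}^1$, the $[1]$-shift in the $\lambda_1$ case contributing only an overall sign.

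The key tool is GRR in its relative form, $\mathrm{ch}(\mathbf{R}p_*G)=p_*\!\big(\mathrm{ch}(G)\cdot\mathrm{td}(T_p)\big)$, where $T_p$ is the relative tangent line bundle; equivalently one uses the quotient $p_*(\mathrm{ch}(\,\cdot\,)\,\mathrm{td}(\Sigma_e))/\mathrm{td}(\mathbb{P}^1)$ as in the proof. From $K_{\Sigma_e}=p^*K_{\mathbb{P}^1}-c_1(T_p)$ and $K_{\Sigma_e}=-2C_0+(-e-2)f$ (which follows from the surface formula $K_S=-2C_0+(2g-2+\deg\mathcal{E})f$ with $g=0$ and $\deg\mathcal{E}=-e$) one gets $c_1(T_p)=2C_0+ef$, hence $\mathrm{td}(T_p)=1+C_0+\tfrac{e}{2}f$. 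For $\rho_2$ I would expand $\mathrm{ch}(E)\cdot\mathrm{td}(T_p)$ with $\mathrm{ch}(E)=(r,c_1,\mathrm{ch}_2)$, multiply out using $C_0^2=-e$, $C_0.f=1$, $f^2=0$, and apply $p_*$, which lowers cohomological degree by two and satisfies $p_*(C_0)=1$, $p_*(f)=0$, $p_*([\mathrm{pt}])=[\mathrm{pt}]$; this yields $\mathrm{ch}_0(\rho_2E)=c_1.f+r$ and $\mathrm{ch}_1(\rho_2E)=\mathrm{ch}_2+c_1.C_0+\tfrac{e}{2}c_1.f$. For $\lambda_1$ I would first replace $\mathrm{ch}(E)$ by $\mathrm{ch}(E)\cdot e^{-C_0-(e+2)f}$ (the twist by $\mathcal{O}_{\Sigma_e}(-C_0+(-e-2)f)$, using $(-C_0-(e+2)f)^2=e+4$), run the same GRR computation, then multiply the result by $\mathrm{ch}(\mathcal{O}_{\mathbb{P}^1}(2))=1+2[\mathrm{pt}]$, and finally negate for the shift by $[1]$, arriving at $\mathrm{ch}_0(\lambda_1E)=-c_1.f$ and $\mathrm{ch}_1(\lambda_1E)=-\mathrm{ch}_2+\tfrac{e}{2}c_1.f$.

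There is no genuine obstacle: the statement is a bookkeeping exercise with Chern classes. The two places that require care are getting the relative Todd class $\mathrm{td}(T_p)$ (equivalently, the quotient $\mathrm{td}(\Sigma_e)/p^*\mathrm{td}(\mathbb{P}^1)$) correct, and tracking cohomological degrees through $p_*$ --- in particular keeping in mind that "$\mathrm{ch}_0$" and "$\mathrm{ch}_1$" in the statement refer to the rank and first Chern character of the object on $\mathbb{P}^1$ associated under $F_1,F_2$, not to the Chern character of $\lambda_1(E),\rho_2(E)$ regarded as objects of $D^b(\Sigma_e)$. Everything else is a direct substitution.
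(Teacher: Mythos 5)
Your proposal is correct and follows essentially the same route as the paper: reduce via the Grothendieck-group identifications to the objects $\mathbf{R}p_*\big(E(-C_0+(-e-2)f)\big)\otimes\cO_{\bP^1}(2)[1]$ and $\mathbf{R}p_*E$ on $\bP^1$, then apply Grothendieck--Riemann--Roch for $p$ (your relative Todd class $1+C_0+\tfrac{e}{2}f$ is exactly the paper's quotient $\rtd(\Sigma_e)/\rtd(\bP^1)$) and track the twist, the $\cO_{\bP^1}(2)$ factor, and the sign from the shift. The bookkeeping as you set it up does reproduce the stated formulas.
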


From now on, we use the notation that, 
\begin{equation} \label{rankanddegreeoflambda1andrho2}
    r_{\lambda_1} = \rch_0(\lambda_1(F)), \quad d_{\lambda_1} = \rch_1(\lambda_1(F)), \quad r_{\rho_2} = \rch_0(\rho_2(F)), \quad d_{\rho_2} = \rch_1(\rho_2(F)).
\end{equation}
As a consequence, for the standard stability condition, we have 
$$
    Z_{st} \circ \lambda_1 = -d_{\lambda_1} + i r_{\lambda_1} \quad \textrm{and} \quad Z_{st} \circ \rho_2 = -d_{\rho_2} + i r_{\rho_2}.
$$ 

The stability functions for quiver stability conditions are different from those of standard stability conditions and must be handled with care. Let $\sigma_{k,\zeta_0,\zeta_1} = (Z_{\zeta_0,\zeta_1},\cA(k)[j])$ be a quiver stability condition as constructed in \eqref{algebraicstabilitycondition}. A general object of $\cA(k)[j]$ can be represented by a two-term complex of the form:
$$
    E = \bC^{n_0} \otimes_\bC \cO(k-1) \to \bC^{n_1} \otimes_\bC \cO(k),
$$
concentrated in degrees $j-1$ and $j$. The pair $[n_0,n_1]_E$ is called the {\it dimension vector} of $E$. Notice that, when the image of an object under a functor $\lambda_1$ or $\rho_2$ lies within such a heart, it can also be described by such complexes and their corresponding dimension vectors. By construction, we have
\begin{eqnarray*} \label{definitionofZijp}
    Z_{\zeta_0,\zeta_1}(E) & = & n_0(E) \zeta_0 + n_1(E) \zeta_1  
\end{eqnarray*}
for some $\zeta_0,\zeta_1 \in \bH$.

The above can be summarized as follows.
\begin{proposition} \label{prop.gluingstabilityfunction}     
    Suppose that $\cA_{gl,m}$ be a glued heart of bounded t-structure which defined in the \eqref{Gluingheart}. Then the following $Z_{gl,m}$ admits the stability function on $\cA_{gl,m}$.
    \begin{enumerate}
        \item When $m=1$, \begin{equation*}
                  Z_{gl,m}(E)  = (-1)^{j_1}(- d_{\lambda_1}(E)  + ir_{\lambda_1}(E)))  + (-1)^{j_2}(n_0(E)\zeta_0 + n_1(E) \zeta_1)
              \end{equation*}
        \item When $m=2$, \begin{equation*}
                   Z_{gl,m}(E)  = (-1)^{j_2}(-d_{\rho_2}(E) + ir_{\rho_2}(E)) + (-1)^{j_1}(n_0(E) \zeta_0 + n_1(E) \zeta_1), \\
              \end{equation*}
        \item When $m=3$, \begin{equation*}
                   Z_{gl,m}(E)  = (-1)^{j_1}(n_0(E) \zeta_0 + n_1(E) \zeta_1) + (-1)^{j_2}(n_0'(E) \zeta_0' +  n_1'(E) \zeta_1'), \\
              \end{equation*}
        \item When $m=4$, \begin{equation*}
                    Z_{gl,m}(E)  = (-1)^{j_1}(-d_{\lambda_1}(E) + i r_{\lambda_1}(E)) + (-1)^{j_2}(- d_{\rho_2}(E)+i r_{\rho_2}(E)).
              \end{equation*}
    \end{enumerate}
    Here, $n_0(E)$ and $n_1(E)$ are the dimension vectors of the image of $E$ under the appropriate functor ($\lambda_1$ or $\rho_2$) into the heart of the quiver stability condition. In particular, $\sigma_{gl,m} = (Z_{gl,m},\cA_{gl,m})$ is a pre-stability condition on $D^b(S)$ for $m =1,2,3,4$.
    \begin{proof}
        We need to show that each $Z_{gl,m}$ satisfies the Harder-Narasimhan property (\cite{CP} \S 2). By construction, for each $m$, the heart $\cA_{gl,m}$ is a gluing of two hearts $\cA_1$ and $\cA_2$ that satisfy the gluing condition $\rhom^{\leq 0}(\cA_1,\cA_2) = 0$. Furthermore, for each stability functions $Z_1$ and $Z_2$ used to construct $Z_{gl,m}$ already have the Harder-Narasimhan property. The result then follows directly from [\cite{CP} Proposition 3.3].
    \end{proof}
\end{proposition}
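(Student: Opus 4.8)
The plan is to obtain the statement as a direct application of the gluing criterion of \cite{CP} (\cite{CP} Proposition 3.3). That criterion asserts the following: given a semiorthogonal decomposition $\cD=\langle\cD_1,\cD_2\rangle$ with left and right adjoints $\lambda_1,\rho_2$, pre-stability conditions $\sigma_i=(Z_i,\cB_i)$ on $\cD_i$ for $i=1,2$, and the gluing condition $\rhom_{\cD}^{\leq 0}(\cB_1,\cB_2)=0$, the glued pair $\big(Z_1\circ\lambda_1+Z_2\circ\rho_2,\ \cB_{gl}\big)$ with $\cB_{gl}$ as in \eqref{glheart} is again a pre-stability condition on $\cD$; in particular $Z_1\circ\lambda_1+Z_2\circ\rho_2$ is a stability function on $\cB_{gl}$. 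Hence it suffices to carry out two tasks: (I) for each $m$, identify the two component pre-stability conditions on $\cD_1=p^*D^b(\bP^1)\otimes\cO_{\hir}(-C_0)$ and $\cD_2=p^*D^b(\bP^1)$ whose glued central charge equals the displayed $Z_{gl,m}$; and (II) verify that the relevant pair of hearts satisfies the gluing condition.

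For task (I): by Definition \ref{typeofgluingstab}, each glued heart $\cA_{gl,m}$ is built from a heart $\cA_1[j_1]\subset\cD_1$ and a heart $\cA_2[j_2]\subset\cD_2$, where $\cA_i[j_i]$ is the $p^*$-pullback (twisted by $\cO_{\hir}(-C_0)$ in the $\cD_1$ factor) of one of the two hearts $\rCoh(\bP^1)[j_i]$ or $\cA(k)[j_i]=\langle\cO(k-1)[j_i+1],\cO(k)[j_i]\rangle$ on $D^b(\bP^1)$. On $D^b(\bP^1)$ the former carries the standard stability function $Z_{st}=-\rdeg+i\,\rrank$ of \eqref{standardstabilitycondition}, and the latter carries the quiver stability function $Z_{\zeta_0,\zeta_1}$ of \eqref{algebraicstabilitycondition} and \eqref{eq.alg.stab.func.}, which is a stability function because $\cA(k)[j_i]$ is the heart generated by a full Ext-exceptional collection (Theorem \ref{Mac07Lem3.143.16}). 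By the inducing formalism of \cite{Mac09} (Proposition \ref{Mac09Prop2.12}, compatible with the descriptions \eqref{IndStabD1} and \eqref{IndStabD2}) these descend to pre-stability conditions on $\cD_1$ and $\cD_2$. Composing with $\lambda_1$ and $\rho_2$ as prescribed in \eqref{glfunc}, substituting the rank and degree of $\lambda_1(E)$ and $\rho_2(E)$ computed in Proposition \ref{Prop_rankdegofadjointfunctor} for the standard components, and the dimension vectors $[n_0,n_1]_E$ of the quiver-heart images for the quiver components, and finally tracking the sign $(-1)^{j_i}$ produced by the shift $[j_i]$ (since $Z(E[1])=-Z(E)$), one recovers verbatim the four expressions in the statement. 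This step is purely bookkeeping.

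For task (II): the gluing condition $\rhom_{D^b(\hir)}^{\leq 0}(\cA_1[j_1],\cA_2[j_2])=0$ of \eqref{gleq1} is precisely the content of Proposition \ref{Gluingheart}, which establishes it, in all four cases, exactly when $j_1>j_2$. By Remark \ref{rmk.degreeofshiftofgluedheart} — a consequence of the one-step shift discrepancy between $\lambda_1$ and $\rho_2$ recorded in Lemma \ref{lambda1rho2} via Grothendieck-Verdier duality — forming a glued heart forces $j_1=j_2+1$, so $j_1>j_2$ holds automatically. Consequently, for each $m$ the hypotheses of \cite{CP} Proposition 3.3 are met, and we conclude that $\cA_{gl,m}$ is the heart of a bounded t-structure and $Z_{gl,m}$ is a stability function on it; that is, $\sigma_{gl,m}=(Z_{gl,m},\cA_{gl,m})$ is a pre-stability condition on $D^b(\hir)$ for $m=1,2,3,4$.

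The main obstacle is not the Harder-Narasimhan property of $Z_{gl,m}$, which is handed to us by \cite{CP}, but rather the internal consistency checks hidden inside task (I): one must track the shift signs $(-1)^{j_i}$ carefully so that each component central charge genuinely takes values in $\bH$ on its shifted heart, and one must justify the dimension-vector description — namely, that $\lambda_1(E)$ (respectively $\rho_2(E)$) really lies in the quiver heart whenever $E\in\cA_{gl,m}$, so that $n_0(E)$ and $n_1(E)$ are well defined and additive on short exact sequences, hence $Z_{gl,m}$ is a genuine group homomorphism $K(D^b(\hir))\to\bC$. Once these are in place, the proposition follows.
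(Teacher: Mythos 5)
Your proposal is correct and follows essentially the same route as the paper: both reduce the statement to \cite{CP} Proposition 3.3, using the gluing condition established in Proposition \ref{Gluingheart} (automatic since $j_1=j_2+1$ by Remark \ref{rmk.degreeofshiftofgluedheart}) together with the Harder--Narasimhan property of the two component stability functions. You merely spell out in more detail the bookkeeping (shift signs, Proposition \ref{Prop_rankdegofadjointfunctor}, dimension vectors well defined because $\lambda_1(E)\in\cA_1$ and $\rho_2(E)\in\cA_2$ by definition of the glued heart) that the paper leaves implicit.
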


Notice that, these $\sigma_{gl,m} = (Z_{gl,m},\cA_{gl,m})$ are numerical stability condition. 
\begin{lemma}
    A stability condition $\sigma_{gl,m} \in \cG l(\sigma_1,\sigma_2)$ is a numerical stability condition for $m=1,2,3,4$. 
    \begin{proof}
        We show that a stability function $Z_{gl,m}$ of $\sigma_{gl,m}$ factors through $K_{num}(\Sigma_e)$. By definition, we have $Z_{gl,m} = Z_1 \circ \lambda_1 + Z_2 \circ \rho_2$. By the additivity of $K_{num}(\Sigma_e)$, it is sufficient to show that both $Z_{gl,m}|_{\cA_i} = Z_i \circ \Phi$ factors through $K_{num}(\Sigma_e)$ for $i=1,2$, where $\Phi \in \{ \lambda_1,\rho_2\}$. Note that $Z_{gl,m}|_{\cA_i}$ is a group homomorphism $Z_{gl,m}|_{\cA_i} : K(F(D^b(\bP^1)) \cong K(D^b(\bP^1)) \to \bC$ where $F \in \{ p^*, (-\otimes\cO_S(-C_0)) \circ p^* \}$ from \eqref{isomofGrothendieckgroups}. But, then we have an isomorphism $K(F(D^b(\bP^1)) \cong K_{num}(F(D^b(\bP^1)))$ since there exists a canonical isomorphism $K(\bP^1) \cong K_{num}(\bP^1) \cong \bZ^{\oplus 2}$. Therefore $Z_i \circ \Phi$ factors through $K_{num}(\Sigma_e)$ via the compotision
        $$
            K(F(D^b(\bP^1))) \cong K_{num}(F(D^b(\bP^1)) \hookrightarrow K_{num}(\Sigma_e).
        $$
        
    \end{proof}
\end{lemma}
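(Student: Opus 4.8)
The content of the statement is that the central charge $Z_{gl,m}$ factors through the quotient map $q \colon K(D^b(\hir)) \twoheadrightarrow K_{num}(\hir)$, whose kernel is the radical of the Euler form $\chi$; equivalently, that $Z_{gl,m}$ vanishes on $\ker q$. Since $\sigma_{gl,m}$ is assumed to lie in $\cG l(\sigma_1,\sigma_2)$, it is already a stability condition, so this factorisation is all that remains to be checked. The plan is to start from the defining formula $Z_{gl,m} = Z_1 \circ \lambda_1 + Z_2 \circ \rho_2$ of Definition \ref{gluingstabilitycondition} (with $\lambda_1, \rho_2$ denoting also the homomorphisms induced on Grothendieck groups by the functors of Lemma \ref{lambda1rho2}) and to reduce, by additivity, to showing that each of $\lambda_1 \colon K(D^b(\hir)) \to K(\cD_1)$ and $\rho_2 \colon K(D^b(\hir)) \to K(\cD_2)$ already kills $\ker q$; the central charges $Z_1, Z_2$ then compose with whatever factorisation results.

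First I would record that the targets are harmless. Via the canonical isomorphisms \eqref{isomofGrothendieckgroups}, $K(\cD_i) \cong K(D^b(\bP^1))$, and since $D^b(\bP^1)$ admits the full exceptional collection $\{\cO(k-1),\cO(k)\}$, its Euler form is nondegenerate; hence $K(\cD_i) \cong K_{num}(\cD_i) \cong \bZ^{\oplus 2}$, with no non-numerical part. It therefore suffices to prove that $\lambda_1$ and $\rho_2$ carry the radical of $\chi$ on $D^b(\hir)$ into the radicals of $\chi$ on $\cD_1$ and $\cD_2$ respectively, and the latter are trivial.

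This last point I would deduce from adjunction. For $x \in \ker q$ and any $y \in \cD_1$, the adjunction $\lambda_1 \dashv \iota_1$ (with $\iota_1 \colon \cD_1 \hookrightarrow D^b(\hir)$ the inclusion) gives $\chi_{\cD_1}(\lambda_1(x), y) = \chi_{D^b(\hir)}(x, \iota_1(y)) = 0$, so $\lambda_1(x)$ lies in the radical of $\chi_{\cD_1}$ and hence is zero. Symmetrically, the adjunction $\iota_2 \dashv \rho_2$ gives $\chi_{\cD_2}(y', \rho_2(x)) = \chi_{D^b(\hir)}(\iota_2(y'), x) = 0$ for every $y' \in \cD_2$, so $\rho_2(x) = 0$. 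Hence $Z_{gl,m}$ vanishes on $\ker q$ and factors through $K_{num}(\hir)$, uniformly in $m = 1,2,3,4$. A more computational alternative is to invoke Proposition \ref{Prop_rankdegofadjointfunctor}, which expresses $r_{\lambda_1}, d_{\lambda_1}, r_{\rho_2}, d_{\rho_2}$ as linear functions of the intersection data of $\rch(E)$; since a class in $K(D^b(\bP^1)) = K_{num}(\bP^1)$ is determined by its rank and degree, and the dimension vectors $n_0, n_1$ (resp.\ $n_0', n_1'$) appearing in Proposition \ref{prop.gluingstabilityfunction} are precisely the coordinates of $\lambda_1(E)$ (resp.\ $\rho_2(E)$) in the basis attached to the exceptional collection, each displayed formula for $Z_{gl,m}$ is then visibly a function of $\rch(E)$ alone.

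I do not anticipate a genuine obstacle: the argument is essentially bookkeeping. The only point demanding care is the assertion that $\cD_1$ and $\cD_2$ carry nothing non-numerical, i.e.\ the nondegeneracy of the Euler pairing on $D^b(\bP^1)$ together with the compatibility of $\lambda_1$ and $\rho_2$ with Euler pairings under adjunction; the sign twists $(-1)^{j_1}, (-1)^{j_2}$ from the shifts (Remark \ref{rmk.degreeofshiftofgluedheart}) merely rescale the central charge by $\pm 1$ and are irrelevant to factorisation through $K_{num}(\hir)$.
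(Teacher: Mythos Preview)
Your proof is correct and follows the same overall strategy as the paper: decompose $Z_{gl,m} = Z_1 \circ \lambda_1 + Z_2 \circ \rho_2$, reduce by additivity to each summand, and use that $K(\cD_i) \cong K(D^b(\bP^1)) \cong K_{num}(\bP^1) \cong \bZ^{\oplus 2}$ has trivial numerical radical.

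The one genuine difference is that you supply an explicit reason, via the adjunctions $\lambda_1 \dashv \iota_1$ and $\iota_2 \dashv \rho_2$, for why $\lambda_1$ and $\rho_2$ send the radical of $\chi$ on $D^b(\hir)$ into the radicals on $\cD_1, \cD_2$. The paper's proof asserts the factorisation ``via the composition $K(F(D^b(\bP^1))) \cong K_{num}(F(D^b(\bP^1))) \hookrightarrow K_{num}(\Sigma_e)$'' without spelling out why the projections $\lambda_1, \rho_2$ themselves descend to the numerical quotient; your adjunction argument is exactly the missing sentence that makes this step rigorous. Your alternative route through Proposition~\ref{Prop_rankdegofadjointfunctor} is also valid and matches how the paper later rewrites $Z_{gl,m}$ in terms of Chern characters in \eqref{eq.stabilityfunctionrepresentedbyCherncharacter}. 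Incidentally, both you and the paper bypass the shortest argument: since $\hir$ itself admits a full exceptional collection (\S\ref{exceptionalcollection}), the Euler form on $K(D^b(\hir))$ is already nondegenerate, so $K(D^b(\hir)) = K_{num}(\hir)$ and every central charge is automatically numerical.
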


\begin{remark}
    For a derived category $\cD$ of an abelian category $\cA$ which generated by an exceptional collection $E_1,...,E_n$, it is well-known fact that $(K(\cA) \cong) \ K(\cD) \cong \bZ^{\oplus n}$, and hence in the our case $K(\cA) \cong \bZ^{\oplus 2}$. Since the Euler form of $K(\bP^1)$ is non-degenarate, this isomorphism lift up to $K_{num}(\bP^1) \cong \bZ^{\oplus 2}$. 
\end{remark}

Next, we show that these four types of pre-stability conditions satisfy the support property. To this end, we first require some preliminary results for the cases $m=1, 2, 3, 4$. In the cases $m=1$ and $3$ (where $\sigma_2$ is a quiver stability condition), the following criterion applies. 
\begin{lemma}[\cite{karube2024noncommutativemmpblowupsurfaces} Proposition 3.11] \label{karube2024noncommutativemmpblowupsurfaces-prop3.11}
    Let $\sigma_1 = (Z_1,\cA_1)$ and $\sigma_2 = (Z_2,\cA_2)$ be stability conditions on $\cD_1$ and $\cD_2$ respectively. We assume that $\rhom^{\leq 0}(\cA_1,\cA_2) = 0$. Assume that there exists the pre-stability condition $\sigma=(Z,\cA)$ glued from $\sigma_1$ and $\sigma_2$. If there exists $0 < \theta \leq 1$ such that $Z(\cA_2) \subset \bH_{\theta}$, where $\bH_\theta$ is the set defined by
    $$
        \{ r e^{i\pi\phi} \mid r \in \bR_{>0}, \phi \in [\theta,1]\}, 
    $$
    then the glued pre-stability condition $\sigma=(Z,\cA)$ satisfies the support property.
\end{lemma}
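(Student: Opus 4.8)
The plan is to reduce everything, via the canonical triangle of the gluing construction, to the support properties of $\sigma_1$ and $\sigma_2$, the only genuinely new input being a lower bound on $|Z(E)|$ obtained by controlling arguments. Concretely, the glued data carry the lattice $\Lambda = \Lambda_1 \oplus \Lambda_2$ with class map $v = (v_1 \circ \lambda_1) \oplus (v_2 \circ \rho_2)$, using the identification $K(\cD) \cong K(\cD_1) \oplus K(\cD_2)$, $[E] \mapsto ([\lambda_1 E],[\rho_2 E])$, afforded by the semiorthogonal decomposition, where $v_i$ and a norm $\|\cdot\|_i$ with constant $C_i > 0$ witness the support property of $\sigma_i$. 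Since \eqref{supportproperty} is independent of the chosen norm and invariant under shifts, I would fix on $\Lambda_\bR$ the norm $\|(\alpha,\beta)\| := \|\alpha\|_1 + \|\beta\|_2$, restrict attention to $\sigma$-semistable $E \in \cA = \cP((0,1])$, and aim to prove \eqref{supportproperty} for $\sigma$ with the explicit constant $C := \sin\!\left(\tfrac{\pi\theta}{2}\right)\min(C_1,C_2)$. Here I would use the standard structural facts about the glued heart from \cite{CP}: the canonical triangle of $E \in \cA$ is a short exact sequence $0 \to \rho_2 E \to E \to \lambda_1 E \to 0$ in $\cA$ with $\rho_2 E \in \cA_2$, $\lambda_1 E \in \cA_1$; the subcategories $\cA_i = \cA \cap \cD_i$ are Serre subcategories of $\cA$ (the $\cD_i$ being triangulated), so the inclusions $\cA_i \hookrightarrow \cA$ are exact; and $Z$ restricts to $Z_i$ on $\cA_i$, so $\phi_\sigma$ agrees with $\phi_{\sigma_i}$ on nonzero objects of $\cA_i$, and in particular $\phi_\sigma$ takes values in $[\theta,1]$ on nonzero objects of $\cA_2$ by the hypothesis $Z(\cA_2) \subset \bH_\theta$.

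Now let $E$ be $\sigma$-semistable of phase $\phi$. If $E \in \cA_1$, then every $\cA$-subobject of $E$ lies in $\cA_1$ ($\cA_1$ being a Serre subcategory), so $E$ is $\sigma_1$-semistable and $|Z(E)| = |Z_1(E)| \ge C_1\|v(E)\| \ge C\|v(E)\|$; the case $E \in \cA_2$ is symmetric. So assume $E_1 := \lambda_1 E$ and $E_2 := \rho_2 E$ are both nonzero. From the canonical sequence, $E_2$ is an $\cA$-subobject of $E$, so $\theta \le \phi_\sigma(E_2) \le \phi$. Let $A_1,\dots,A_n$ be the Harder--Narasimhan factors of $E_1$ in $(\cA_1,\sigma_1)$, ordered by $\phi_{\sigma_1}(A_1) > \cdots > \phi_{\sigma_1}(A_n)$: since $A_n$ is a quotient of $E_1$ in $\cA_1$, it is a quotient of $E$ in $\cA$ (exactness of $\cA_1 \hookrightarrow \cA$ and the canonical sequence), so $\phi_{\sigma_1}(A_n) = \phi_\sigma(A_n) \ge \phi$, and therefore $\phi_{\sigma_1}(A_i) \in [\phi,1] \subseteq [\theta,1]$ for all $i$. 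Dually, the Harder--Narasimhan factors $B_1,\dots,B_m$ of $E_2$ in $(\cA_2,\sigma_2)$, ordered by $\phi_{\sigma_2}(B_1) > \cdots > \phi_{\sigma_2}(B_m)$, satisfy $\phi_{\sigma_2}(B_j) \in [\theta,\phi] \subseteq [\theta,1]$, since $B_1$ embeds into $E_2$, hence into $E$. Thus
\[
    Z(E) = \sum_{i=1}^{n} Z_1(A_i) + \sum_{j=1}^{m} Z_2(B_j)
\]
is a sum of nonzero complex numbers all of whose arguments lie in $[\pi\theta,\pi]$, a sector of angular width $\pi(1-\theta) < \pi$.

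The final input is elementary: if $w_1,\dots,w_N \in \bC\setminus\{0\}$ have arguments in an interval $[a,b]$ with $0 \le b-a < \pi$, then $\left|\sum_k w_k\right| \ge \cos\!\left(\tfrac{b-a}{2}\right)\sum_k|w_k|$ (rotate so the interval is symmetric about the positive real axis and bound $\left|\sum_k w_k\right|$ below by its real part). Applying this with $\cos\!\left(\tfrac{\pi(1-\theta)}{2}\right) = \sin\!\left(\tfrac{\pi\theta}{2}\right)$ gives $|Z(E)| \ge \sin\!\left(\tfrac{\pi\theta}{2}\right)\left(\sum_i|Z_1(A_i)| + \sum_j|Z_2(B_j)|\right)$. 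Each $A_i$ is $\sigma_1$-semistable, so $|Z_1(A_i)| \ge C_1\|v_1(A_i)\|_1$; summing, using the triangle inequality and $\sum_i v_1(A_i) = v_1(E_1)$, gives $\sum_i|Z_1(A_i)| \ge C_1\|v_1(E_1)\|_1$, and likewise $\sum_j|Z_2(B_j)| \ge C_2\|v_2(E_2)\|_2$. Therefore
\[
    |Z(E)| \ge \sin\!\left(\tfrac{\pi\theta}{2}\right)\!\left(C_1\|v_1(E_1)\|_1 + C_2\|v_2(E_2)\|_2\right) \ge C\!\left(\|v_1(E_1)\|_1 + \|v_2(E_2)\|_2\right) = C\,\|v(E)\|,
\]
which is \eqref{supportproperty}.

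The substantive step is the phase bracketing in the second paragraph: although $\lambda_1 E$ and $\rho_2 E$ need not be semistable, \emph{every} $\sigma_1$-Harder--Narasimhan factor of $\lambda_1 E$ has phase $\ge \phi_\sigma(E)$ and \emph{every} $\sigma_2$-Harder--Narasimhan factor of $\rho_2 E$ has phase $\le \phi_\sigma(E)$, so all the central charges contributing to $Z(E)$ lie in a common sector of angular width $< \pi$. This is exactly where the hypothesis $Z(\cA_2) \subset \bH_\theta$ with $\theta > 0$ is used: it keeps those arguments away from the positive real axis, ruling out the cancellation in $Z(E)$ that would otherwise obstruct a lower bound; with only $Z(\cA_2) \subset \bH$ the estimate degenerates to the trivial $|Z(E)| \ge 0$. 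The remaining routine points are checking the structural claims about $\cA$ (or quoting them from \cite{CP}) and the bookkeeping with norms on $\Lambda = \Lambda_1 \oplus \Lambda_2$.
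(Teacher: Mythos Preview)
The paper does not prove this lemma itself; it is cited from \cite{karube2024noncommutativemmpblowupsurfaces} and applied as a black box in Proposition~\ref{supportpropertyofgluingstabilitycondition} for the cases $m=1,3$. Your argument is a correct, self-contained proof.

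For comparison, the closest the paper comes to arguing in this style is the $m=2$ case of Proposition~\ref{supportpropertyofgluingstabilitycondition} (via Lemma~\ref{inequalityofstabilityfunctions}), where the support-property bound $\|v_i(E_i)\|_i\le C_i|Z_i(E_i)|$ is applied directly to $E_1=\lambda_1 E$ and $E_2=\rho_2 E$ without checking $\sigma_i$-semistability. That shortcut happens to be harmless over $\bP^1$, since there $Z_i:K(\bP^1)_\bR\to\bC$ is a real-linear isomorphism and the bound holds for \emph{every} object of the heart; but in the generality of the lemma one must do exactly what you do: pass to the $\sigma_i$-Harder--Narasimhan factors of $E_1,E_2$, use the $\sigma$-semistability of $E$ (through the subobject $B_1\hookrightarrow E$ and the quotient $E\twoheadrightarrow A_n$) to pin all factor phases into $[\theta,1]$, and then apply the sector estimate and the support property of each $\sigma_i$ factor by factor. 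The one structural claim you defer, that $\cA_1,\cA_2$ are Serre subcategories of $\cA$, is correct: apply the exact functor $\rho_2$ (resp.\ $\lambda_1$) to a short exact sequence in $\cA$ with middle term in $\cA_1$ (resp.\ $\cA_2$) to force the outer terms there as well, and note that the inclusions $\cA_i\hookrightarrow\cA$ are exact because distinguished triangles in $\cD_i$ remain distinguished in $\cD$.
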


The case $m=2$ is slightly more involved; to address this, we establish the following technical lemma. We define 
$$
    \theta_q := {\rm min} \{ \phi(\zeta_0),\phi(\zeta_1) \}.
$$
\begin{lemma} \label{inequalityofstabilityfunctions}
    Let $\sigma = (Z_{gl,2},\cA_{gl,2})$ be a gluing pre-stability condition glued from a quiver stability condition $\sigma_1 = \sigma_{k,\zeta_0,\zeta_1}$ and a standard stability condition $\sigma_2=\sigma_{st}$ for some $k \in \bZ$ and $\zeta_0,\zeta_1 \in \bH$. Then there exists a constant $C > 0$ such that for any $\sigma$-semistable object $E$, 
    \begin{equation}
        |Z_1(E)| \leq C |Z_{gl,2}(E)| \quad \textrm{and} \quad |Z_2(E)| \leq (1+C)|Z_{gl,2}(E)|
    \end{equation}

    \begin{proof}
    Let us denote $z_1(E) := Z_1(\lambda_1(E))$ and $z_2(E) := Z_2(\rho_2(E))$. Note that $Z_{gl,2}(E) = z_1(E) + z_2(E)$.
    From the definition of the quiver stability condition, the argument of $z_1(E)$ satisfies
    $$
        \arg(z_1(E)) \in [\arg(\zeta_0), \arg(\zeta_1)] \subset [\pi \theta_q, \pi(1-\theta_q)].
    $$
    Consequently, we have
    $$
        \Im z_1(E) = |z_1(E)| \sin (\arg(z_1(E))) \geq |z_1(E)| \sin(\pi \theta_q).
    $$
    
    Since $\sigma_2$ is the standard stability condition, we have $\Im z_2(E) \ge 0$. By the additivity of the central charge, it follows that
    $$
        \Im Z_{gl,2}(E) = \Im z_1(E) + \Im z_2(E) \geq \Im z_1(E).
    $$
    Using the basic inequality $|w| \ge \Im w$, we obtain
    $$
        |Z_{gl,2}(E)| \geq \Im Z_{gl,2}(E) \geq \Im z_1(E) \geq |z_1(E)| \sin(\pi \theta_q).
    $$
    Setting $C := \frac{1}{\sin(\pi \theta_q)}$, we have
    \begin{equation}
        |z_1(E)| \leq C |Z_{gl,2}(E)|.
    \end{equation}
    Finally, combining this with the triangle inequality, we deduce the desired bound:
    $$
        |z_2(E)| = |Z_{gl,2}(E) - z_1(E)| \leq |Z_{gl,2}(E)| + |z_1(E)| \leq (1+C)|Z_{gl,2}(E)|.
    $$
    \end{proof}
\end{lemma}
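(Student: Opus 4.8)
The plan is to reduce both inequalities to elementary plane geometry of the two summand central charges. I would first set $z_1(E) := Z_1(\lambda_1(E))$ and $z_2(E) := Z_2(\rho_2(E))$, so that by the defining property of a gluing pre-stability condition (Definition \ref{gluingstabilitycondition}) one has $Z_{gl,2}(E) = z_1(E) + z_2(E)$. Since $\lambda_1$ and $\rho_2$ are exact and $|Z(E[n])| = |Z(E)|$, I may assume up to shift that the $\sigma$-semistable object $E$ lies in the glued heart $\cA_{gl,2}$; then by condition $(2)$ of Definition \ref{gluingstabilitycondition} the object $\lambda_1(E)$ sits in the heart of $\sigma_1$ and $\rho_2(E)$ in the heart of $\sigma_2$, so both $z_1(E)$ and $z_2(E)$ belong to $\bH$, i.e.\ have nonnegative imaginary part.

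The crucial step is to bound the direction of $z_1(E)$ away from the real axis. Because $\sigma_1$ is the quiver stability condition $\sigma_{k,\zeta_0,\zeta_1}$, the object $\lambda_1(E)$ has a nonnegative dimension vector $[n_0,n_1]$, hence $z_1(E) = n_0\zeta_0 + n_1\zeta_1$ lies in the closed convex cone spanned by $\zeta_0$ and $\zeta_1$. Therefore $\rarg z_1(E)$ is trapped in a compact subinterval of $(0,\pi)$ determined by $\zeta_0,\zeta_1$, which gives $\rIm z_1(E) \geq |z_1(E)|\sin(\pi\theta_q)$ with $\theta_q = \min\{\phi(\zeta_0),\phi(\zeta_1)\}$. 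On the other side, $\sigma_2 = \sigma_{st}$ is the standard stability condition, so $z_2(E)\in\bH$ already forces $\rIm z_2(E)\geq 0$. Combining the imaginary parts and using $|w|\geq \rIm w$, I obtain
\[
|Z_{gl,2}(E)| \geq \rIm Z_{gl,2}(E) = \rIm z_1(E) + \rIm z_2(E) \geq \rIm z_1(E) \geq |z_1(E)|\sin(\pi\theta_q),
\]
which is the first inequality with $C := 1/\sin(\pi\theta_q)$. The second then drops out of the triangle inequality applied to $z_2(E) = Z_{gl,2}(E) - z_1(E)$, namely $|z_2(E)| \leq |Z_{gl,2}(E)| + |z_1(E)| \leq (1+C)|Z_{gl,2}(E)|$.

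I expect the only genuine obstacle to be the direction estimate for $z_1(E)$: it relies essentially on $\sigma_1$ being a \emph{quiver} stability condition, so that its central charges fill a proper cone rather than an entire half-plane, together with the parameters $\phi(\zeta_0),\phi(\zeta_1)$ staying strictly below $1$ (a degenerate $\zeta_i$ on the negative real axis would need a trivial separate argument, since then $Z_{gl,2}(E)$ and $z_1(E)$ differ only by the harmless term $z_2(E)$). The asymmetry between the constants $C$ and $1+C$ simply reflects that $\sigma_2$ contributes nothing pointing ``downward'' but also nothing that would help bound $z_2(E)$ directly; everything past the cone estimate is bookkeeping with the triangle inequality.
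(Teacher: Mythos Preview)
Your proposal is correct and follows essentially the same argument as the paper: both reduce to the cone estimate $\rIm z_1(E)\geq |z_1(E)|\sin(\pi\theta_q)$ coming from the quiver central charge, combine it with $\rIm z_2(E)\geq 0$ from the standard side, and finish with the triangle inequality, taking $C=1/\sin(\pi\theta_q)$. Your write-up is in fact slightly more careful than the paper's in two places (the explicit reduction to $E\in\cA_{gl,2}$ via shifts, and the remark about the degenerate case $\phi(\zeta_i)=1$), but the strategy and the constant are identical.
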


The case $m=4$ is technically more difficult. Unlike the quiver stability case, 
a lower bound on the phase is not automatically ensured. We postpone the detailed proof to Section \ref{supportpropertyofm=4}.
\begin{proposition} \label{supportpropertyofgluingstabilitycondition}
    For each $m=1,2,3,4$, any glued pre-stability condition $\sigma_{gl,m} = (Z_{gl,m},\cA_{gl,m}) \in \cG l(\sigma_1,\sigma_2)$ satisfies the support property. In particular, each $\sigma_{gl,m}$ is a locally finite stability condition.
   \begin{proof}
        For the case $m=4$ follows from the Proposition \ref{prop:m4-support-property}. We consider the cases $m=1$ and $3$ (where $\cA_2$ is the heart of a quiver stability condition). We apply the criterion in Lemma \ref{karube2024noncommutativemmpblowupsurfaces-prop3.11}. Let $E \in \cA_2$ be a non-zero object. Its central charge is given by $Z_2(E) = n_0 \zeta_0 + n_1 \zeta_1$, where $\zeta_0,\zeta_1 \in \bH$, and the dimension vector $(n_0,n_1)$ consists of non-negative integers. Since $E \neq 0$, the dimension vector is non-zero, meaning at least one of $n_0$ or $n_1$ is a positive integer. Observe that the phase of the sum $n_0 \zeta_0 + n_1 \zeta_1$ is bounded by the phases of $\zeta_0$ and $\zeta_1$. We define $\theta := \min \{ \phi(\zeta_0), \phi(\zeta_1) \}$. Since $\zeta_0,\zeta_1 \in \bH$, the image $Z_2(\cA_2 \setminus \{0\})$ is contained in the sector $\bH_\theta = \{ r e^{i \pi \phi} \mid \phi \in [\theta,1] \}$. Thus, the condition of Lemma \ref{karube2024noncommutativemmpblowupsurfaces-prop3.11} is satisfied, and the support property holds.

        Next, we consider the case $m=2$. Recall that $\sigma_1 = \sigma_{k,\zeta_0,\zeta_1} = (Z_1,\cA_1)$ and $\sigma_2 = \sigma_{st} = (Z_2,\cA_2)$ satisfy the support property. That is, for $i=1,2$, there exists a constant $C_i > 0$ such that
        $$
            \|v_i(E)\|_i \leq C_i |Z_i(E)|
        $$
        for all $\sigma_i$-semistable objects $E$, where $\|\cdot\|_i$ is a norm on $\Lambda_i \otimes \bR$. Using the isomorphism $\Lambda \cong \Lambda_1 \oplus \Lambda_2$, we equip $\Lambda_\bR$ with the norm $\|v\| := \|v_1\|_1 + \|v_2\|_2$.
        Thus, for any object $E$, we have
        $$
            \| v(E) \| \leq \| v_1(\lambda_1(E)) \|_1 + \| v_2(\rho_2(E)) \|_2.
        $$
        Combining these with the support property of each component, we obtain
        $$
            \| v(E) \| \leq C_1 |Z_1(\lambda_1(E))| + C_2 |Z_2(\rho_2(E))|.
        $$
        By Lemma \ref{inequalityofstabilityfunctions}, there exists a constant $C' > 0$ such that the above inequality becomes
        $$
            \| v(E) \| \leq C_1 C' |Z_{gl,2}(E)| + C_2(1+C') |Z_{gl,2}(E)|.
        $$
        Setting $C = C_1 C' + C_2(1+C')$, we obtain the desired inequality
        $$
            \| v(E) \|\leq C |Z_{gl,2}(E)|.
        $$
\end{proof}
\end{proposition}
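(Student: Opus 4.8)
The plan is to separate the four glued types (Definition \ref{typeofgluingstab}) according to which component hearts are of quiver type and which are of standard type, and to treat them by complementary methods. For the types $m=1$ and $m=3$ the right-hand component heart $\cA_2$ is generated by an Ext-exceptional collection, so I would apply the sector criterion of Lemma \ref{karube2024noncommutativemmpblowupsurfaces-prop3.11}. Since $\rho_2$ restricts to the identity on $\cD_2$ while $\lambda_1$ vanishes there, the glued central charge $Z_{gl,m}$ restricts on $\cA_2$ to the central charge of the quiver stability condition $\sigma_2$ (cf.\ Proposition \ref{prop.gluingstabilityfunction}), which sends a nonzero object of dimension vector $(n_0,n_1)\in\bZ_{\geq 0}^{2}$ to $n_0\zeta_0+n_1\zeta_1$ (to $n_0\zeta_0'+n_1\zeta_1'$ when $m=3$). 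Because $\zeta_0,\zeta_1\in\bH$ have phases in $(0,1]$, the phase of any such nonnegative combination is bounded below by $\theta_q=\min\{\phi(\zeta_0),\phi(\zeta_1)\}>0$; hence $Z_{gl,m}(\cA_2\setminus\{0\})\subset\bH_{\theta_q}$ and the criterion yields the support property at once.

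For the type $m=2$ the left component $\cA_1$ is of quiver type but $\cA_2=p^{*}\rCoh(\bP^1)[j_2]$ is of standard type, and the sector criterion is unavailable since $Z_{gl,2}(\cA_2)$ accumulates on the real axis. Here I would combine the support properties of $\sigma_1$ and $\sigma_2$ componentwise. The key preliminary observation is that for \emph{each} component heart the estimate $\|v_i(F)\|_i\leq C_i|Z_i(F)|$ holds for every object $F$ of the heart, not just the semistable ones: for the quiver heart because the dimension vector is nonnegative while $\zeta_0,\zeta_1$ span a cone in $\bC$ containing no line, and for $p^{*}\rCoh(\bP^1)[j_2]$ because in the $\bP^1$-picture $|Z_2(F)|$ is exactly the Euclidean norm of $(\rrank,\rdeg)$. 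Equipping $\Lambda_\bR\cong\Lambda_{1,\bR}\oplus\Lambda_{2,\bR}$ with the norm $\|v\|=\|v_1\|_1+\|v_2\|_2$, and using $v(E)=v(\lambda_1 E)+v(\rho_2 E)$ from the defining triangle \eqref{gluingexactseq} together with $\lambda_1 E\in\cA_1$ and $\rho_2 E\in\cA_2$ for $E$ in the glued heart (Definition \ref{gluingstabilitycondition}), one obtains $\|v(E)\|\leq C_1|Z_1(\lambda_1 E)|+C_2|Z_2(\rho_2 E)|$; Lemma \ref{inequalityofstabilityfunctions} then bounds both terms by a fixed multiple of $|Z_{gl,2}(E)|$ for $\sigma$-semistable $E$, finishing this case.

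The main obstacle is the type $m=4$, where both components are of standard type: now $Z_{gl,4}$ confines neither $\cA_1$ nor $\cA_2$ to a sector $\bH_\theta$, so Lemma \ref{karube2024noncommutativemmpblowupsurfaces-prop3.11} does not apply, and there is no a priori lower bound on the phase of $Z_1(\lambda_1 E)$ to run the triangle-inequality argument of the case $m=2$; a genuinely global estimate is required. I would postpone this case to Section \ref{supportpropertyofm=4} (Proposition \ref{prop:m4-support-property}): the natural strategies are either to exhibit a quadratic form on $\Lambda_\bR\cong\bZ^{\oplus 4}$ that is negative semidefinite on $\ker Z_{gl,4}$ and nonnegative on the classes of $\sigma_{gl,4}$-semistable objects, or to identify $\sigma_{gl,4}$, up to the $\widetilde{GL^{+}(2,\bR)}$-action, with a stability condition in $\overline{S_{div}}$ whose support property is already known; either route replaces the componentwise bookkeeping by a uniform Bogomolov-type inequality. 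Finally, in all four cases the support property entails local finiteness (the lattice $\Lambda=K_{num}(\hir)$ having finite rank), which gives the final assertion.
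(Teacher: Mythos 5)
Your proposal follows essentially the same route as the paper: the sector criterion of Lemma \ref{karube2024noncommutativemmpblowupsurfaces-prop3.11} for $m=1,3$, the componentwise norm estimate combined with Lemma \ref{inequalityofstabilityfunctions} for $m=2$, and deferral of $m=4$ to Proposition \ref{prop:m4-support-property}. If anything, your explicit justification that the componentwise bound $\|v_i(F)\|_i\leq C_i|Z_i(F)|$ holds for \emph{all} objects of each heart (not only $\sigma_i$-semistable ones) is a point the paper's own argument glosses over, since $\lambda_1(E)$ and $\rho_2(E)$ need not be $\sigma_i$-semistable.
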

        
For later use, we discuss alternative representations for each $Z_{gl,m}$. According to \cite{arcara2013minimal} \S 7, There are transformation matrices between Chern classes and dimension vectors. The conversion from a dimension vector to a vector of Chern class is given by 
  \begin{equation}
      C = \begin{pmatrix}
          -1 & 1 \\
          1-k & k
      \end{pmatrix}.
  \end{equation}
Conversely, the conversion from a vector of Chern class to a dimension vector is given by
  \begin{equation}
      C^{-1} = \begin{pmatrix}
          -k & 1 \\
          1-k & 1
      \end{pmatrix}.
  \end{equation}
Therefore, for example, in the case where $j=0$ in $\cA(k)[j]$, for an object $E \in \cA(k)$ with dimension vector $[n_0,n_1]_E$, its rank and degree are computed as 
 \begin{equation} \label{convDimvectToChern}     
    \begin{pmatrix}
        \rrank \ E \\
        \rdeg \ E
    \end{pmatrix} =
      C \begin{pmatrix}
          n_0 \\
          n_1
      \end{pmatrix} = \begin{pmatrix}
          n_1-n_0 \\
          (1-k)n_0+kn_1
      \end{pmatrix}.
 \end{equation}
Conversely, since the degree and rank of $\lambda_1$ and $\rho_2$ are given by proposition \ref{Prop_rankdegofadjointfunctor}, the dimension vector for the image of an object can be calculated as
\begin{equation} \label{dimensionvectoroflambda1andrho2}
    \begin{pmatrix}
        n_0 \\
        n_1
    \end{pmatrix} =
     \begin{pmatrix}
         -k & 1 \\ 1-k & 1
     \end{pmatrix} 
       \begin{pmatrix}
          r_{*} \\
          d_{*}
      \end{pmatrix} = \begin{pmatrix}
           d_{*} - k r_{*}\\
          d_{*} + (1-k)r_{*}
      \end{pmatrix}.
 \end{equation}
where $* \in \{ \lambda_1, \rho_2 \}$. This gives a conversion of the coordinates on the Grothendieck groups 
$$
    K(\bP^1) \xrightarrow{(\rrank,\rdeg )} \bZ^{\oplus 2} \ \ \underset{C}{\overset{C^{-1}}{\rightleftarrows}} \ \ \bZ^{\oplus 2} \xleftarrow{(n_0,n_1)} K(\cA(k)).
$$
Considering that $K_{num}(\bP^1) \cong \mathbb{Z}^{\oplus 2}$, this means that the stability function for a quiver stability condition $\sigma$ can be expressed as a group homomorphism of the numerical Chern characters via this transformation. Hence, $\sigma$ is a numerical stability condition. As a result, the stability function $Z_{gl,m}$ defined in Proposition \ref{prop.gluingstabilityfunction} can be rewritten in terms of Chern characters as follows:
\begin{align} \label{eq.stabilityfunctionrepresentedbyCherncharacter}
    Z_{gl,1} = (-1)^{j_1}(&- d_{\lambda_1}(E)  + ir_{\lambda_1}(E))  + (-1)^{j_2}((d_{\rho_2}-k r_{\rho_2} )\zeta_0 + (d_{\rho_2} + (1-k) r_{\rho_2})\zeta_1) \notag \\ 
    Z_{gl,2} = (-1)^{j_2}(&-d_{\rho_2}(E) + ir_{\rho_2}(E)) + (-1)^{j_1}((d_{\lambda_1}-k r_{\lambda_1} )\zeta_0 + (d_{\lambda_1} + (1-k) r_{\lambda_1})\zeta_1) \notag \\ 
    Z_{gl,3} = (-1)^{j_1}(&(d_{\lambda_1}-k r_{\lambda_1} )\zeta_0 + (d_{\lambda_1} + (1-k) r_{\lambda_1})\zeta_1) \notag \\ 
    &+(-1)^{j_2}((d_{\rho_2}-k r_{\rho_2} )\zeta'_0 + (d_{\rho_2} + (1-k) r_{\rho_2})\zeta'_1).
\end{align}
\hspace{1mm}

\subsection{The support property of $m=4$} \label{supportpropertyofm=4}
In this section, we establish the support property for the glued type $m=4$. Recall that for any object $E \in \cA_{gl,4}$, 
there exist $E_1, E_2 \in \rCoh(\bP^1)$ and a canonical short exact sequence
\begin{equation} \label{eq.exactseqofAgl4}
    0 \to \rho_2(E)=p^*E_2 \to E \to p^*E_1 \otimes \cO_{\Sigma_e}(-C_0)[1] =\lambda_1(E) \to 0
\end{equation}
(cf. \eqref{gluingexactseq}). Combining the additivity of Chern characters with this sequence yields
\begin{align}
    \rch(E) &=  \rch(p^*E_2) + \rch(p^*E_1 \otimes \cO_{\Sigma_e}(-C_0)[1]) \notag \\
            &= (r_2-r_1, r_1C_0+(d_2-d_1)f, d_1+\frac{1}{2}e r_1) \notag \\
            &=: (r,c_1,\rch_2) \label{m4chern}
\end{align}
where $r_i:=\rrank(E_i)$ and $d_i:=\rdeg(E_i)$ for $i=1,2$. On the other hand, from Proposition \ref{prop.gluingstabilityfunction} (4), Proposition \ref{rankanddegreeoflambda1andrho2}, and Remark \ref{rmk.degreeofshiftofgluedheart}, the stability function $Z_{gl,4}$ can be written as
\begin{align}
    Z_{gl,4} &= (-1)^{j_1}(-d_{\lambda_1} +ir_{\lambda_1}) + (-1)^{j_2}(-d_{\rho_2} + ir_{\rho_2} ) \notag \\
    &= d_{\lambda_1} - d_{\rho_2} + i(r_{\rho_2} - r_{\lambda_1}) \notag \\
             &= \left(  - \rch_2 +\frac{1}{2}e c_1.f\right) - \left( \rch_2+c_1.C_0+\frac{1}{2}ec_1.f \right) + i \left( c_1.f+r -(-c_1.f) \right) \notag \\
             &= -(2\rch_2+c_1C_0) + i(r + 2c_1.f). \label{m4stabilityfunc}
\end{align}
Here, we used the relation $j_1 = j_2+1$ (Remark \ref{rmk.degreeofshiftofgluedheart}) for the second equality. Substituting the Chern characters from \eqref{m4chern} into \eqref{m4stabilityfunc}, we obtain the simplified equation:
\begin{equation} \label{m4stabilityfuncasstandard}
    Z_{gl,4}(E) = -(d_1+d_2) + i(r_1+r_2) = Z_{st}(E_1) + Z_{st}(E_2).
\end{equation}
This is expressed as the sum of two standard stability functions $Z_{st} = -\rdeg +i \rrank$ on the heart $\rCoh(\bP^1)$. Hence we define 
\begin{equation} \label{slopeofZgl4}
    \mu(E) :=  \frac{d_1+d_2}{r_1+r_2}
\end{equation}
and let us define the values
$$
    D := d_1+d_2 = \rRe Z_{gl,4}(E), \qquad R:= r_1+r_2 = \rIm Z_{gl,4}(E).
$$
From the above, we see that $\mu(\lambda_1(E)) = \frac{d_1}{r_1}$ and $\mu(\rho_2(E)) = \frac{d_2}{r_2}$ coincide with the slopes $\mu(E_1)$ and $\mu(E_2)$ on the base curve. Accordingly, we denote the maximal and minimal slopes by $\mu_{\max}$ and $\mu_{\min}$, respectively.

From equation \eqref{m4stabilityfuncasstandard}, the absolute value of $Z_{gl,4}$ satisfies
\begin{equation}\label{eq:m4-absZ}
    |Z_{gl,4}(E)|=\sqrt{(d_1+d_2)^2+(r_1+r_2)^2}\ \ge\ |d_1+d_2|,\ r_1+r_2.
\end{equation}

In order to prove the support property for the case $m=4$, we first require some preliminary results.
\begin{lemma} \label{lem:m4-torsion-exclusion}
    Let $0\ne E\in\cA_{gl,4}$ be a $\sigma_{gl,4}$-semistable object and write it as in \eqref{eq.exactseqofAgl4}. Then:
    \begin{enumerate}
        \item If $R>0$, then both $E_1$ and $E_2$ are torsion-free, hence vector bundles on $\bP^1$.
        \item If $R=0$, then $E_1$ and $E_2$ are torsion sheaves and $Z_{gl,4}(E)=-(d_1+d_2)\in\bR_{<0}$.
    \end{enumerate}

    \begin{proof}
        (1) Assume $R>0$. If $E_2$ has a nonzero torsion subsheaf $T\subset E_2$, then
        $p^*T\subset p^*E_2 =\rho_2(E)\subset E$ is a subobject in $\cA_{gl,4}$. Since $Z_{st}(T)\in\bR_{<0}$, 
        we have $p^*T\in\cP(1)$, hence $\phi(p^*T)=1$. On the other hand, $\rIm Z_{gl,4}(E)=R>0$ implies $\phi^+(E)<1$. Thus $\phi(p^*T)>\phi^+(E)$, contradicting the $\sigma_{gl,4}$-semistability of $E$.
        
  Next, suppose $E_1$ has a nonzero torsion subsheaf $T \subset E_1$. Then $p^*T \otimes \cO_{\Sigma_e}(-C_0)[1]$ is a subobject of $\lambda_1(E)$ in $\cA_{gl,4}$. Since $\rho_2(p^*T \otimes \cO_{\Sigma_e}(-C_0)[1]) = 0$, this object lifts to a subobject $S \subset E$ in $\cA_{gl,4}$.
The central charge of $S$ is $Z_{gl,4}(S) = Z_{st}(T) = -\mathrm{deg}(T) \in \mathbb{R}_{<0}$, which implies $\phi(S)=1$.
On the other hand, since $\Im Z_{gl,4}(E) = R > 0$, we have $\phi(E) < 1$. The existence of a subobject $S$ with $\phi(S) > \phi(E)$ contradicts the semistability of $E$. Thus, $E_1$ and $E_2$ must be torsion-free. On $\bP^1$, they are vector bundles.
        
        (2) If $R=0$, then $r_1=r_2=0$, so $E_1$ and $E_2$ are torsion, and $Z_{gl,4}(E)=-(d_1+d_2)\in\bR_{<0}$ follows from the definition.
    \end{proof}
\end{lemma}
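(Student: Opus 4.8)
The plan is to prove part (1) by contradiction, exploiting two facts: under $Z_{st}$ a nonzero torsion sheaf on $\bP^1$ has central charge on $\bR_{<0}$, hence phase $1$; and by the reduction \eqref{m4stabilityfuncasstandard} the restriction of $Z_{gl,4}$ to a pullback from either factor agrees with $Z_{st}$. When $R>0$ the object $E$ is $\sigma_{gl,4}$-semistable of some phase $\phi(E)$ with $\rIm Z_{gl,4}(E)=R>0$, so $\phi(E)<1$. It therefore suffices to produce, out of any nonzero torsion subsheaf of $E_1$ or of $E_2$, a genuine subobject of $E$ in $\cA_{gl,4}$ of phase $1$: such an object destabilizes $E$.

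First I would dispose of $E_2$. If $T\subset E_2$ is a nonzero torsion subsheaf, then $p^*T$ is a subobject of $\rho_2(E)=p^*E_2$, and by the defining short exact sequence \eqref{eq.exactseqofAgl4} the latter is a subobject of $E$ in $\cA_{gl,4}$. Since $p^*T\in\cA_2\subset\cA_{gl,4}$ and $Z_{gl,4}(p^*T)=Z_{st}(T)\in\bR_{<0}$, this is a phase-$1$ subobject, contradicting semistability. Hence $E_2$ is torsion-free, i.e. a vector bundle on $\bP^1$.

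The subtle case, and the main obstacle, is $E_1$. Given a nonzero torsion subsheaf $T\subset E_1$, the natural candidate is $S:=p^*T\otimes\cO_{\Sigma_e}(-C_0)[1]\in\cA_1\subset\cA_{gl,4}$, which again satisfies $Z_{gl,4}(S)=Z_{st}(T)\in\bR_{<0}$, i.e. phase $1$. The difficulty is that $\lambda_1(E)$ appears in \eqref{eq.exactseqofAgl4} as a \emph{quotient} of $E$, not a subobject, so the monomorphism $S\hookrightarrow\lambda_1(E)$ induced by $T\hookrightarrow E_1$ must be lifted along the epimorphism $\pi\colon E\twoheadrightarrow\lambda_1(E)$ to a morphism $S\to E$ (which is automatically a monomorphism, as its composite with $\pi$ is mono). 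The obstruction to this lift is the class of the composite $S\hookrightarrow\lambda_1(E)\xrightarrow{w}\rho_2(E)[1]$, where $w$ is the connecting morphism of \eqref{eq.exactseqofAgl4}, living in $\rExt^1_{\cA_{gl,4}}(S,\rho_2(E))$. Here is where I would invoke that $E_2$ is \emph{already} known torsion-free: by the projection formula, together with $\cE=\bfR p_*\cO_{\Sigma_e}(C_0)$,
$$
    \rExt^1_{\cA_{gl,4}}(S,\rho_2(E))\cong\rhom_{\Sigma_e}(p^*T\otimes\cO_{\Sigma_e}(-C_0),p^*E_2)\cong\rhom_{\bP^1}(T,E_2\otimes\cE),
$$
and since $T$ is torsion while $E_2\otimes\cE$ is torsion-free (a vector bundle tensored with the locally free $\cE$), this group vanishes. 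Thus the lift exists, $S$ embeds into $E$ as a phase-$1$ subobject, and semistability is again violated. This completes (1); note the order matters, as the vanishing of the obstruction for $E_1$ relies on having first settled the $E_2$ case.

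Finally, part (2) should be immediate: $R=r_1+r_2=0$ forces $r_1=r_2=0$ by nonnegativity of ranks on $\bP^1$, so $E_1$ and $E_2$ are torsion sheaves, and $Z_{gl,4}(E)=-(d_1+d_2)$ is real. Since $E\neq0$ and $Z_{gl,4}$ is a stability function, its value lies in $\bH$; with vanishing imaginary part this forces $Z_{gl,4}(E)\in\bR_{<0}$, equivalently $d_1+d_2>0$ (as degrees of nonzero torsion sheaves are positive).
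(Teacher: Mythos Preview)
Your proof is correct, and for part (2) and the $E_2$ case of part (1) it matches the paper exactly. The difference lies in how you handle the $E_1$ case.

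You lift $S\hookrightarrow\lambda_1(E)$ to $S\hookrightarrow E$ by explicitly computing the obstruction group $\rExt^1_{\cA_{gl,4}}(S,\rho_2(E))$ via the projection formula, reducing it to $\rhom_{\bP^1}(T,E_2\otimes\cE)$, and then killing this by appealing to the torsion-freeness of $E_2$ that you established first. The paper instead observes that $S\in\cD_1$ implies $\rho_2(S)=0$, and then the adjunction $\rhom^\bullet_{D^b(\Sigma_e)}(S,\rho_2(E))\cong\rhom^\bullet_{\cD_2}(\rho_2(S),\rho_2(E))=0$ dispatches the obstruction in all degrees at once. The paper's route is shorter and, notably, does \emph{not} depend on having first proved $E_2$ torsion-free; the two cases are logically independent there. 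Your route is more hands-on and has the virtue of being self-contained (no adjunction bookkeeping), but the order-dependence you flag is an artifact of the method rather than a feature of the problem.
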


\begin{lemma} \label{lem:m4-degree-control}
    Let $0\ne E\in\cA_{gl,4}$ be $\sigma_{gl,4}$-semistable and write it as in \eqref{eq.exactseqofAgl4}. Assume $R=r_1+r_2>0$, so $E_1$ and $E_2$ are vector bundles by Lemma~\ref{lem:m4-torsion-exclusion}. Let $\mu(E)=\frac{d_1+d_2}{r_1+r_2}$ defined in \eqref{slopeofZgl4}. Then the following hold.
    \begin{enumerate}
        \item One has the inequality
            \begin{equation}
                \mu_{\max}(E_2)\ \le\ \mu(E)\ \le\ \mu_{\min}(E_1).
            \end{equation}
    
        \item If $E$ is non-split and $e>0$, then $\rhom_{D^b(\bP^1)}(E_1,E_2)\ne 0$.
    
        \item In either case (split or non-split), $E_1$ is slope-semistable and
                \begin{equation} \label{eq:m4-d1-linear}
                    d_1=\mu(E)\,r_1=\frac{r_1}{r_1+r_2}\,(d_1+d_2).
                \end{equation}
              In particular,
                \begin{equation} \label{eq:m4-d-bounds}
                    |d_1|\le |d_1+d_2|,\qquad |d_2|\le 2|d_1+d_2|.
                \end{equation}
    \end{enumerate}
    \begin{proof}
        (1) The existence of the subobject $\rho_2(E)\subset E$ and the quotient $E\twoheadrightarrow \lambda_1(E)$ forces the phase inequalities. Since $Z_{st}=-\rdeg+i\rrank$ on $\bP^1$, these inequalities are equivalent to the HN-inequalities for slope. 

        (2) By the projection formula and canonical isomorphism $ \bfR p_*\cO_{\Sigma_e}(C_0) \cong \cO \oplus \cO(-e)$, 
        we have an isomorphism
         \begin{align*}
                     \rExt^1_{\cA_{gl,4}}(\lambda_1(E),\rho_2(E)) &\cong 
                     \rhom_{D^b(\bP^1)}(E_1,E_2 \otimes (\cO \oplus \cO(-e))\\
                     &\cong \rhom_{D^b(\bP^1)}(E_1,E_2) \oplus \rhom_{D^b(\bP^1)}(E_1,E_2(-e)),
         \end{align*} 
         where the first isomorphism is given by \eqref{projectionformula1}. 
        Suppose $\rhom(E_1, E_2(-e)) \neq 0$. Then by the stability of vector bundles on curves, we have $\mu_{\min}(E_1) \le \mu_{\max}(E_2(-e)) = \mu_{\max}(E_2) - e$. Combining this with (1), we obtain
        \begin{equation*}
            \mu_{\max}(E_2) \le \mu(E) \le \mu_{\min}(E_1) \le \mu_{\max}(E_2) - e.
        \end{equation*}
        This implies $e \le 0$, which contradicts the assumption $e > 0$. Consequently, $\rhom(E_1, E_2(-e)) = 0$. Since $E$ is non-split, the extension class implies a non-zero element in the remaining summand, i.e., $\rhom(E_1, E_2) \neq 0$.
        
        (3) First assume $E$ is non-split and $e>0$. From (2) we have $\rhom(E_1,E_2)\ne 0$, which implies $\mu_{\min}(E_1) \le \mu_{\max}(E_2)$. Combining this with the inequality from (1), we obtain a chain of inequalities:
\[
    \mu_{\max}(E_2) \le \mu(E) \le \mu_{\min}(E_1) \le \mu_{\max}(E_2).
\]
Consequently, all inequalities must be equalities: $\mu_{\max}(E_2) = \mu(E) = \mu_{\min}(E_1)$. Now, suppose for contradiction that $E_1$ is not slope-semistable. Then there exists a destabilizing subsheaf $K \subset E_1$ with $\mu_{\min}(K) > \mu(E_1) \ge \mu(E)$. We define a subobject $S := p^*K \otimes \cO_{\Sigma_e}(-C_0)[1] \subset \lambda_1(E)$.
Consider the pullback extension $E' \subset E$ defined by the short exact sequence:
\[
    0 \to \rho_2(E) \to E' \to S \to 0.
\]
The extension class of this sequence lies in $\rExt^1(S, \rho_2(E))$. By the projection formula, we have
\[
    \rExt^1(S, \rho_2(E)) \cong \rhom(K, E_2) \oplus \rhom(K, E_2(-e)).
\]
Since $\mu_{\min}(K) > \mu(E) \ge \mu_{\max}(E_2)$, both Hom-groups vanish. Thus, the extension splits, i.e., $E' \cong \rho_2(E) \oplus S$. This implies that $S$ is a subobject of $E'$ and hence of $E$. Since $\phi(S) = \mu(K) > \mu(E)$, this contradicts the semistability of $E$. Hence, $E_1$ is slope-semistable of slope $\mu(E)$. The statement \eqref{eq:m4-d1-linear} follows from the slope-semistability of $E_1$, which implies \eqref{eq:m4-d-bounds}.

If $E$ is split, we have an isomorphism $E \simeq p^*E_2 \oplus (p^*E_1 \otimes \cO_{\Sigma_e}(-C_0)[1])$ in $\cA_{gl,4}$. Since $E$ is $\sigma_{gl,4}$-semistable, its direct summands must also be $\sigma_{gl,4}$-semistable with the same phase $\phi(E)$.
In particular, $\phi(p^*E_1 \otimes \cO_{\Sigma_e}(-C_0)[1]) = \phi(E)$, which implies $\mu(E_1) = \mu(E)$.
Furthermore, the semistability of $p^*E_1 \otimes \cO_{\Sigma_e}(-C_0)[1]$ in $\cA_{gl,4}$ implies the slope-semistability of $E_1$ on $\bP^1$, as any destabilizing subsheaf of $E_1$ would induce a destabilizing subobject in $\cA_{gl,4}$. Thus, $E_1$ is slope-semistable with slope $\mu(E)$, and the equalities \eqref{eq:m4-d1-linear} and \eqref{eq:m4-d-bounds} hold.
    \end{proof}
\end{lemma}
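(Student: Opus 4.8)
The plan is to exploit the clean form \eqref{m4stabilityfuncasstandard} of the central charge, namely $Z_{gl,4}(\cdot) = Z_{st}((\cdot)_1) + Z_{st}((\cdot)_2)$, together with the elementary fact that for the standard function $Z_{st} = -\rdeg + i\,\rrank$ on $\rCoh(\bP^1)$ the phase of a positive-rank object is a strictly increasing function of its slope. In this way every phase comparison coming from $\sigma_{gl,4}$-semistability of $E$ translates directly into a slope comparison on the base curve. The whole argument then reduces to producing honest sub- and quotient objects of $E$ inside $\cA_{gl,4}$ out of the Harder--Narasimhan pieces of the bundles $E_1, E_2$ on $\bP^1$, which is possible because the gluing projections $\lambda_1, \rho_2$ are exact and the sequence \eqref{eq.exactseqofAgl4} is short exact in $\cA_{gl,4}$.

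For (1), I would feed the maximal destabilizing subsheaf $E_2^{\max} \subset E_2$ through the exact functor $p^*$ to obtain a subobject $p^*E_2^{\max} \hookrightarrow \rho_2(E) \hookrightarrow E$ in $\cA_{gl,4}$; its central charge is $Z_{st}(E_2^{\max})$, so its phase encodes the slope $\mu_{\max}(E_2)$, and semistability $\phi(p^*E_2^{\max}) \le \phi(E)$ gives $\mu_{\max}(E_2) \le \mu(E)$. Dually, the minimal HN quotient $E_1 \twoheadrightarrow E_1^{\min}$ produces a quotient $E \twoheadrightarrow \lambda_1(E) \twoheadrightarrow p^*E_1^{\min}\otimes\cO_{\Sigma_e}(-C_0)[1]$ whose phase encodes $\mu_{\min}(E_1)$, and $\phi(E) \le \phi(\text{this quotient})$ gives $\mu(E) \le \mu_{\min}(E_1)$ (the degenerate cases $E_1 = 0$ or $E_2 = 0$ being vacuous under the conventions $\mu_{\min}(0)=+\infty$, $\mu_{\max}(0)=-\infty$).

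For (2), the non-split sequence \eqref{eq.exactseqofAgl4} defines a nonzero class in $\rExt^1_{\cA_{gl,4}}(\lambda_1(E), \rho_2(E))$. Using $\rExt^1(A[1], B) \cong \rhom(A, B)$ and the projection formula together with $\bfR p_* \cO_{\Sigma_e}(C_0) \cong \cO \oplus \cO(-e)$ (equivalently \eqref{projectionformula1}), I would identify this group with $\rhom_{D^b(\bP^1)}(E_1, E_2) \oplus \rhom_{D^b(\bP^1)}(E_1, E_2(-e))$. The key point is to kill the second summand: a nonzero map $E_1 \to E_2(-e)$ of bundles on $\bP^1$ forces $\mu_{\min}(E_1) \le \mu_{\max}(E_2(-e)) = \mu_{\max}(E_2) - e$, which combined with (1) yields $\mu_{\max}(E_2) \le \mu_{\max}(E_2) - e$, i.e. $e \le 0$, contradicting $e > 0$. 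Hence the nonzero extension class lies entirely in the $\rhom(E_1, E_2)$ summand, so $\rhom(E_1, E_2) \ne 0$.

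For (3), in the non-split case ($e > 0$) I would combine (2) --- which gives $\mu_{\min}(E_1) \le \mu_{\max}(E_2)$ --- with (1) to collapse the chain $\mu_{\max}(E_2) \le \mu(E) \le \mu_{\min}(E_1) \le \mu_{\max}(E_2)$ into equalities, so $\mu_{\max}(E_2) = \mu(E) = \mu_{\min}(E_1)$. If $E_1$ were not slope-semistable, its maximal destabilizing subsheaf $K$ would satisfy $\mu_{\min}(K) = \mu_{\max}(E_1) > \mu(E) = \mu_{\max}(E_2) \ge \mu_{\max}(E_2(-e))$; setting $S := p^*K \otimes \cO_{\Sigma_e}(-C_0)[1] \subset \lambda_1(E)$ and pulling back \eqref{eq.exactseqofAgl4} along $S \hookrightarrow \lambda_1(E)$ gives a sub-extension $0 \to \rho_2(E) \to E' \to S \to 0$ whose class lives in $\rExt^1(S, \rho_2(E)) \cong \rhom(K, E_2) \oplus \rhom(K, E_2(-e))$; the slope bounds make both summands vanish, the extension splits, and $S$ becomes a subobject of $E$ with $\phi(S) > \phi(E)$, contradicting semistability. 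In the split case $E \cong \rho_2(E) \oplus \lambda_1(E)$, both summands are $\sigma_{gl,4}$-semistable of phase $\phi(E)$, forcing $\mu(E_1) = \mu(E)$ and the slope-semistability of $E_1$ directly. Either way $\mu(E_1) = \mu(E)$ yields \eqref{eq:m4-d1-linear}, whence $|d_1| = |\mu(E)|\,r_1 \le |\mu(E)|\,R = |d_1 + d_2|$ and $|d_2| \le |d_1+d_2| + |d_1| \le 2|d_1+d_2|$. The main obstacle is precisely the splitting step in (3): one must verify that the pulled-back extension genuinely splits in $\cA_{gl,4}$ and that the resulting $S$ is an honest destabilizing subobject of $E$, which is exactly where the projection-formula computation of $\rExt^1(S, \rho_2(E))$ and the sharp slope inequalities from (1) and (2) are indispensable.
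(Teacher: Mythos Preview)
Your proposal is correct and follows essentially the same route as the paper's proof: the same projection-formula identification of $\rExt^1(\lambda_1(E),\rho_2(E))$, the same slope contradiction to kill the $\rhom(E_1,E_2(-e))$ summand in (2), the same collapse of the chain of inequalities in (3), and the same pulled-back-extension splitting argument with $S = p^*K\otimes\cO_{\Sigma_e}(-C_0)[1]$ to exclude a destabilizing piece of $E_1$. Your treatment of (1) is in fact more careful than the paper's: the paper only invokes $\rho_2(E)\subset E$ and $E\twoheadrightarrow\lambda_1(E)$, which a priori gives $\mu(E_2)\le\mu(E)\le\mu(E_1)$ rather than the sharper $\mu_{\max}(E_2)\le\mu(E)\le\mu_{\min}(E_1)$; your use of the HN extremal pieces $E_2^{\max}$ and $E_1^{\min}$ is exactly what is needed to justify the stated inequality.
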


Now, we fix the norm on $\Lambda_\bR = K_{num} \otimes \bR$ as  
$$
    \|\rch(G)\| := {\rm max} \left \{ |r|,|a|,|b|, |\rch_2| \right \}
$$
for $G \in D^b(\Sigma_e)$, where $c_1(G)$ is denoted by $aC_0+bf$. Using this with the formula \eqref{m4chern}, we obtain
$$
    \| \rch(E) \| = {\rm max} \left \{ |r_2-r_1|, |r_1|, |d_2-d_1|, \left| d_1+\frac{1}{2}e r_1 \right| \right \}
$$
for all $E \in \cA_{gl,4}$. 

\begin{proposition}[{\bf Support property for $m=4$}]\label{prop:m4-support-property}
The glued pre-stability condition $\sigma_{gl,4}=(Z_{gl,4},\cA_{gl,4})$ satisfies the support property. Namely, there exists a constant $C>0$ such that for every $\sigma_{gl,4}$-semistable object $0\ne E\in\cA_{gl,4}$, we have
\[
    \|\rch(E)\|\ \le\ C\,|Z_{gl,4}(E)|.
\]
\end{proposition}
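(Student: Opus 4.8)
The plan is to bound $\|\rch(E)\|$ componentwise by a fixed multiple of $|Z_{gl,4}(E)|$, leaning entirely on the two preliminary lemmas. First I would note that the claimed inequality is invariant under the shift $[1]$, since both $\rch$ and $Z_{gl,4}$ merely change sign while the norm and the absolute value are unchanged; hence it suffices to treat a nonzero $\sigma_{gl,4}$-semistable object $E\in\cA_{gl,4}$. For such $E$, write it via \eqref{eq.exactseqofAgl4} with $E_1,E_2\in\rCoh(\bP^1)$ and set $r_i=\rrank(E_i)$, $d_i=\rdeg(E_i)$. By \eqref{m4chern} the four quantities governing $\|\rch(E)\|$ are $r_2-r_1$, $r_1$, $d_2-d_1$ and $d_1+\tfrac12 e r_1$, while \eqref{eq:m4-absZ} gives $|Z_{gl,4}(E)|\ge\max\{|d_1+d_2|,\,r_1+r_2\}$. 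So the problem reduces to bounding $r_1,r_2,|d_1|,|d_2|$ separately by a constant times $\max\{|d_1+d_2|,\,R\}$, where $R:=r_1+r_2$.

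Next I would split along the dichotomy of Lemma~\ref{lem:m4-torsion-exclusion}. If $R=0$, the ranks $r_1,r_2$ vanish (ranks on $\bP^1$ are nonnegative), $E_1,E_2$ are torsion sheaves, so $d_i=\mathrm{length}(E_i)\ge 0$ and $d_1+d_2>0$ because $E\ne 0$; then $|Z_{gl,4}(E)|=d_1+d_2$ and each of the four norm entries is at most $d_1+d_2$, giving $\|\rch(E)\|\le|Z_{gl,4}(E)|$. If $R>0$, Lemma~\ref{lem:m4-torsion-exclusion}(1) makes $E_1,E_2$ vector bundles, so $r_1,r_2\ge 0$ with $r_1+r_2=R\le|Z_{gl,4}(E)|$, which bounds the two rank-type entries. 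For the degrees I would invoke Lemma~\ref{lem:m4-degree-control}(3): $E_1$ is slope-semistable and $d_1=\tfrac{r_1}{r_1+r_2}(d_1+d_2)$, so, since $0\le\tfrac{r_1}{r_1+r_2}\le 1$, one gets $|d_1|\le|d_1+d_2|$ and $|d_2|=|(d_1+d_2)-d_1|\le 2|d_1+d_2|$, which is exactly \eqref{eq:m4-d-bounds}. Hence $|d_2-d_1|\le 3|Z_{gl,4}(E)|$ and $|d_1+\tfrac12 e r_1|\le|d_1|+\tfrac{e}{2}|r_1|\le(1+\tfrac{e}{2})|Z_{gl,4}(E)|$. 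Taking $C:=\max\{3,\,1+\tfrac{e}{2}\}$, a constant depending only on $e$, we conclude $\|\rch(E)\|\le C|Z_{gl,4}(E)|$ in all cases, which is the asserted support property.

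The step I expect to be the real obstacle has in fact already been absorbed into Lemma~\ref{lem:m4-degree-control}: a priori there is no reason for $|d_1|$ and $|d_2|$ to be controlled by the sum $|d_1+d_2|$, because $Z_{gl,4}$ only records $d_1+d_2$ and massive cancellation between $d_1$ and $d_2$ is conceivable — this is precisely the feature that distinguishes $m=4$ from the quiver cases, where a lower phase bound is automatic. What rules it out is the slope sandwich $\mu_{\max}(E_2)\le\mu(E)\le\mu_{\min}(E_1)$ of Lemma~\ref{lem:m4-degree-control}(1), which — via the $\rhom(E_1,E_2)\ne 0$ argument when $e>0$, and directly in the split and $e=0$ situations — forces $E_1$ to be slope-semistable of slope exactly $\mu(E)$, thereby pinning $d_1$ to the line $d_1=\mu(E)\,r_1$. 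Granting that input, the proof of the proposition itself is just the short case analysis over $R=0$ and $R>0$ sketched above, together with collecting constants.
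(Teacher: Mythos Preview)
Your proof is correct and follows essentially the same route as the paper: split into $R=0$ and $R>0$ via Lemma~\ref{lem:m4-torsion-exclusion}, in the positive-rank case invoke Lemma~\ref{lem:m4-degree-control}(3) to get $|d_1|\le|d_1+d_2|$ and $|d_2|\le 2|d_1+d_2|$, then bound each entry of $\rch(E)$ and arrive at the same constant $C=\max\{3,\,1+\tfrac{e}{2}\}$. Your remark reducing to $E\in\cA_{gl,4}$ by shift-invariance is a small clarifying addition the paper leaves implicit.
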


\begin{proof}
    Let $E$ be as in \eqref{eq.exactseqofAgl4} and set $D = d_1+d_2$, $R = r_1+r_2$ defined in \eqref{slopeofZgl4}. If $R=0$, 
    then $r_1=r_2=0$ and $E_1,E_2$ are torsion. In this case $|Z_{gl,4}(E)|=|D|$. Using \eqref{m4chern} with $r_1=r_2=0$, we have $r=0$, $a=0$, $b=d_2-d_1$, and $\rch_2=d_1$. Hence,
    \[
        \|\rch(E)\|=\max\{|d_2-d_1|,\ |d_1|\}\le |d_1|+|d_2|= |D|=|Z_{gl,4}(E)|,
    \]
    and hence the inequality holds with constant $1$.

    Assume now $R>0$. Then $E_1,E_2$ are vector bundles, and Lemma \ref{lem:m4-degree-control} holds. In particular, we have $|d_1|\le |D|$ and $|d_2|\le 2|D|$.
    Using \eqref{m4chern}, the Chern character is given by
    \[
        \rch(E)=(r,aC_0+bf,\rch_2),\quad r=r_2-r_1,\ a=r_1,\ b=d_2-d_1,\ \rch_2=d_1+\frac{e}{2}r_1.
    \]
    We estimate each component in terms of $|Z_{gl,4}(E)|$:
    \begin{itemize}
        \item $|r|=|r_2-r_1|\le R\le |Z_{gl,4}(E)|$.
        \item $|a|=r_1\le R\le |Z_{gl,4}(E)|$.
        \item $|b|=|d_2-d_1|\le |d_1|+|d_2|\le 3|D|\le 3|Z_{gl,4}(E)|$.
        \item $|\rch_2|=\left|d_1+\frac{e}{2}r_1\right| \le |d_1|+\frac{e}{2}|r_1| \le |D|+\frac{e}{2}R \le \Bigl(1+\frac{e}{2}\Bigr)|Z_{gl,4}(E)|$.
    \end{itemize}
    Taking $C := \max\{3, 1+\frac{e}{2}\}$, the inequality holds for all semistable objects.
\end{proof}

\hspace{1mm}

\subsection{Gluing perversity}
In this section, we define gluing perversity and provide the condition for a skyscraper sheaf is $\sigma_{gl}$-stable. In \cite{Uch}, for a pre-stability condition $\sigma \in \cG l_{pre}((Z_1,\cP_1),(Z_2,\cP_2))$ with $\cP_1(0) = p^*\cP_{st}(\phi) \otimes \cO_{\hir}(-C_0)$ and $\cP_2(0) = p^*\cP_{st}$, the {\it gluing perversity} is defined as
$$
    \rper(\sigma) := \phi_1-\phi_2.
$$
In \cite{Uch}, this concept serves roughly three purposes: First, to measure the phase discrepancy of skyscraper sheaves between $\sigma_1$ and $\sigma_2$; second, to prove that $\sigma$ is locally finite stability condition; and finally, to provide a sufficient condition for the gluing condition \eqref{gleq1} to be satisfied. In this paper, however, the latter two properties are established by Proposition \ref{Gluingheart} and Proposition \ref{supportpropertyofgluingstabilitycondition}, respectively. Therefore, we adopt a slightly different definition from \cite{Uch}, redefining perversity for the more limited purpose of discussing the phase discrepancy of skyscraper sheaves. 

\begin{definition} \label{gluingperbersity}
    Let $\sigma \in \cG l(\sigma_1,\sigma_2)$. For a point $x \in \Sigma_e$, the {\it gluing perversity} of $\sigma$ is defined as
    \begin{equation}
         {\rm per}(\sigma) = \phi(\lambda_1(\cO_x)) - \phi(\rho_2(\cO_x))
    \end{equation} 
    where $\lambda_1$ and $\rho_2$ are the adjoint functors defined in Lemma \ref{Uch.Proposition3.3}. 
\end{definition}

\vspace{2mm}
Note that by definition, we have $\rho_2(\cO_x) = \cO_f$ and $\lambda_1(\cO_x) = \cO_f(-C_0)[1]$ for any $x \in \Sigma_e$. Thus the above definition can be rephrased as 
$$
    {\rm per}(\sigma) = \phi(\cO_f(-C_0)[1]) - \phi(\cO_f).
$$

\begin{lemma} \label{phaseoflambda1Oxandrho2Ox}
Let $\sigma \in \cG l(\sigma_1,\sigma_2)$. Then, we have 
\begin{equation}
\phi(\lambda_1(\cO_x)) =  \left \{ \begin{aligned}  
            & 1 & {\rm if  \ \sigma_1 \ is \ standard} \\
	          & \psi(Z_1(\cO(k))+ Z_1(\cO(k-1)[1])	& {\rm if  \ \sigma_1 \ is \ algebraic} \end{aligned}
              \right.
\end{equation}
Similarly, 
\begin{equation}    
\phi(\rho_2(\cO_x)) =  \left \{ \begin{aligned}  
            & 1 & {\rm if  \ \sigma_2 \ is \ standard} \\
	          & \psi(Z_2(\cO(k))+ Z_2(\cO(k-1)[1])	& {\rm if  \ \sigma_2 \ is \ algebraic} \end{aligned}
              \right .
\end{equation}
where $\psi(E)$ is the phase of object $E$ on $\rStab(\bP^1)$.

\begin{proof}
We will first prove the statement for $\phi(\rho_2(\cO_x))$. Assume that $\sigma \in \cG l (\sigma_1,\sigma_2)$. If $\sigma_2$ is standard, then from Proposition \ref{prop.gluingstabilityfunction} and simple calculation of Chern characters, we have $\phi(Z_2(\rho_2(\cO_x))) = 1$ in $\cA_2[j_2] = \cP_2(0,1]$. Assume $\sigma_2$ is algebraic. Pulling back the triangle \eqref{eq.Oka05Lemma3.1(3)} via $p^*$ yields a triangle
\begin{equation} \label{eq.trianglofOf}
	 p^*\cO(k)[j] \to \cO_f[j] \to p^*\cO(k-1)[j+1].
\end{equation}
on $p^*D^b(\bP^1)$. Since $p^*\cO(k-1)[j+1]$ and $p^*\cO(k)[j]$ are objects within the heart $ \cP_2(0,1] = p^*\cA(k)[j]$, the object $\cO_f[j]$ must also be in $\cP_2(0,1]$ by the extension closedness of $p^*\cA(k)[j]$. Therefore we get 
$$
    Z_2(\rho_2(\cO_x)) = Z_2(O_f[j]) = Z_2(p^*O(k)[j]) + Z_2(p^*O(k-1)[j+1]).
$$
The phases of objects in $p^*\sigma_2 \in \rStab(p^*D^b(\bP^1))$ is equals to the phase of objects in $\sigma_2 \in \rStab(\bP^1)$, since the phases is preserved under $p^*$ (\cite{CP} Proposition 2.2(3)). Therefore, this proves the claim for the algebraic case. The statement for $\sigma_1$ is obtained by tensoring the triangle \eqref{eq.Oka05Lemma3.1(3)} with $\cO_{\hir}(-C_0)$, and making a similar argument.
\end{proof}
\end{lemma}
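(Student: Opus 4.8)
The plan is to reduce both phase computations to the structure of the hearts of $D^b(\bP^1)$ and transport the answer along the phase-preserving functor $p^*$. Since $\lambda_1(\cO_x)\in\cD_1$ and $\rho_2(\cO_x)\in\cD_2$, and since in Definition \ref{gluingperbersity} the phase is computed in $\sigma$, which on objects of the semiorthogonal summands coincides with the $\sigma_i$-phase (cf. \cite{CP}), it suffices to compute the $\sigma_2$-phase of $\rho_2(\cO_x)=\cO_f=p^*\cO_{p(x)}$ and the $\sigma_1$-phase of $\lambda_1(\cO_x)=\cO_f(-C_0)[1]$. By \cite{CP} Proposition 2.2(3) these equal the phases on $\rStab(\bP^1)$ of the objects corresponding to them under the identifications $\cD_2\simeq p^*D^b(\bP^1)$ and $\cD_1\simeq p^*D^b(\bP^1)\otimes\cO_{\hir}(-C_0)$. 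I will carry out the argument for $\rho_2(\cO_x)$; the statement for $\lambda_1(\cO_x)$ then follows verbatim after tensoring the relevant triangle by $\cO_{\hir}(-C_0)$ and invoking Remark \ref{rmk.degreeofshiftofgluedheart} ($j_1=j_2+1$) to place the image in the correctly shifted heart.

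If $\sigma_2$ is standard, the object in play is the pullback of the skyscraper $\cO_{p(x)}\in\rCoh(\bP^1)$, a torsion sheaf; its standard central charge $Z_{st}(\cO_{p(x)})=-\rdeg\,\cO_{p(x)}$ lies on the negative real axis, so the phase is $1$. Equivalently one reads this off from Proposition \ref{prop.gluingstabilityfunction} together with the Chern-character computation $\rch(\cO_f)=(0,f,0)$, which via Proposition \ref{Prop_rankdegofadjointfunctor} gives $r_{\rho_2}(\cO_x)=0$ and $d_{\rho_2}(\cO_x)=1$; that $\rho_2(\cO_x)$ genuinely sits at the top of the heart $\cP_2((0,1])$ is what pins the phase to exactly $1$. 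If instead $\sigma_2$ is algebraic, I would pull back Oka's triangle $\cO(k)\to\cO_x\to\cO(k-1)[1]$ from Lemma \ref{lemOka3.1}(2) (up to the ambient shift $[j_2]$) to obtain $p^*\cO(k)[j_2]\to\cO_f[j_2]\to p^*\cO(k-1)[j_2+1]$ in $p^*D^b(\bP^1)$. Its two outer terms are precisely the Ext-exceptional generators of the heart $p^*\cA(k)[j_2]=\cP_2((0,1])$, so by extension-closedness $\cO_f[j_2]$ lies in this heart, and hence its phase equals $\tfrac1\pi\arg Z_2(\cO_f[j_2])=\tfrac1\pi\arg\bigl(Z_2(p^*\cO(k)[j_2])+Z_2(p^*\cO(k-1)[j_2+1])\bigr)$; undoing the $[j_2]$-shift and transporting along $p^*$ identifies this with $\psi\bigl(Z_2(\cO(k))+Z_2(\cO(k-1)[1])\bigr)$, as claimed.

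The one point genuinely requiring care is that the phase of a sum of central charges is only a well-defined element of $(0,1]$ once one knows the underlying object lies in the heart — which is exactly what the extension-closedness step supplies — together with the more tedious task of checking that the shifts $j_1=j_2+1$ and the signs $(-1)^{j_i}$ appearing in the explicit formula for $Z_{gl}$ (Proposition \ref{prop.gluingstabilityfunction}) are consistently absorbed, so that no spurious integer is added to the phases in the statement. Beyond Oka's triangles, the phase-invariance of $p^*$, and the extension-closedness of the Ext-exceptional hearts, no further input is needed: once the right triangle is pulled back the argument is essentially bookkeeping.
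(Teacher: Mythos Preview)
Your proposal is correct and follows essentially the same approach as the paper: both arguments compute the standard case via the Chern character (or equivalently by noting the skyscraper is torsion), handle the algebraic case by pulling back Oka's triangle $\cO(k)\to\cO_{p(x)}\to\cO(k-1)[1]$ via $p^*$ and invoking extension-closedness of the Ext-exceptional heart together with the phase-preservation of \cite{CP} Proposition 2.2(3), and treat the $\lambda_1$-side by tensoring with $\cO_{\hir}(-C_0)$. Your additional remarks on the shift bookkeeping ($j_1=j_2+1$) and on why landing in the heart is needed for the phase to be well-defined are accurate refinements, but the underlying strategy is identical.
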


For the glued types $m=1$ and $m=2$, we verify the following property.
\begin{lemma} \label{lem.phaseofzeta0zeta1}
    Let $\sigma=\sigma_{gl,m} \in \cG l(\sigma_1,\sigma_2)$ be a gluing stability condition with $\rper(\sigma) = 0$. Suppose $m \in \{1, 2\}$, so that one of the stability conditions $\sigma_i$ is a quiver stability condition $\sigma_{k,\zeta_0,\zeta_1}$ with $\zeta_0,\zeta_1 \in \bH$. Then the parameters $\zeta_0$ and $\zeta_1$ must lie on the negative real axis $\mathbb{R}_{<0}$.
    \begin{proof}
    We begin with the case $m=1$. By definition, $\sigma_1$ is a standard stability condition, while $\sigma_2$ is of quiver type. By Lemma \ref{phaseoflambda1Oxandrho2Ox}, we identify the phases as $\phi(\lambda_1(\cO_x))=1$ and $\phi(\rho_2(\cO_x)) = \phi(\zeta_0+\zeta_1)$. The vanishing of the gluing perversity, $\rper(\sigma)=0$, imposes the equality:
    $$
        \phi(\lambda_1(\cO_x)) = 1 = \phi(\rho_2(\cO_x)) = \phi(\zeta_0+\zeta_1).
    $$
    This equality forces the sum $\zeta_0+\zeta_1$ to lie on the negative real axis $\mathbb{R}_{<0}$.  Recall that the parameters $\zeta_i$ belong to $\mathbb{H} = \{ r e^{i \pi \phi} \mid r > 0, 0 < \phi \le 1 \}$, which implies $\mathrm{Im}(\zeta_i) \ge 0$ for $i=0,1$. Therefore, the sum $\zeta_0+\zeta_1$ is real if and only if $\mathrm{Im}(\zeta_0) = \mathrm{Im}(\zeta_1) = 0$. Consequently, $\zeta_0$ and $\zeta_1$ must be negative real numbers. For the case $m=2$, the same argument applies by exchanging the roles of $\sigma_1$ and $\sigma_2$.    \end{proof}
\end{lemma}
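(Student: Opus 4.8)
The plan is to compute the two phases $\phi(\lambda_1(\cO_x))$ and $\phi(\rho_2(\cO_x))$ using Lemma~\ref{phaseoflambda1Oxandrho2Ox}, impose the hypothesis $\rper(\sigma)=0$, and then read off a reality constraint on the quiver parameters from the resulting phase equality.

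First I would treat the case $m=1$. By Definition~\ref{typeofgluingstab}, here $\sigma_1$ is a standard stability condition on $\cD_1 = p^*D^b(\bP^1)\otimes\cO_{\hir}(-C_0)$ while $\sigma_2 = \sigma_{k,\zeta_0,\zeta_1}$ is a quiver stability condition on $\cD_2 = p^*D^b(\bP^1)$. Lemma~\ref{phaseoflambda1Oxandrho2Ox} then gives $\phi(\lambda_1(\cO_x)) = 1$ (the standard side) and $\phi(\rho_2(\cO_x)) = \psi\big(Z_2(\cO(k)) + Z_2(\cO(k-1)[1])\big)$, where $\psi$ denotes the phase in $\rStab(\bP^1)$; on the algebraic side this identity comes from the triangle \eqref{eq.Oka05Lemma3.1(3)} pulled back along $p^*$ together with additivity of the central charge (and $p^*$-invariance of phases, as already recorded inside that lemma). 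Substituting the defining values $Z_2(\cO(k)[j]) = \zeta_1$ and $Z_2(\cO(k-1)[j+1]) = \zeta_0$ from \eqref{eq.alg.stab.func.}, I obtain $\phi(\rho_2(\cO_x)) = \phi(\zeta_0+\zeta_1)$.

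Next, the hypothesis $\rper(\sigma) = \phi(\lambda_1(\cO_x)) - \phi(\rho_2(\cO_x)) = 0$ forces $\phi(\zeta_0+\zeta_1) = 1$, i.e. $\zeta_0+\zeta_1 \in \bR_{<0}$. The last step is the elementary observation that $\zeta_0,\zeta_1 \in \bH$ implies $\mathrm{Im}\,\zeta_i \ge 0$ for $i=0,1$, so $\mathrm{Im}(\zeta_0+\zeta_1)=0$ is possible only when $\mathrm{Im}\,\zeta_0 = \mathrm{Im}\,\zeta_1 = 0$; combined with membership in $\bH$ (phase in $(0,1]$), this pins both $\zeta_0$ and $\zeta_1$ to the negative real axis. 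For $m=2$ the argument is verbatim after interchanging the roles of $\sigma_1$ and $\sigma_2$, so that $\sigma_1 = \sigma_{k,\zeta_0,\zeta_1}$, $\sigma_2$ is standard, $\phi(\rho_2(\cO_x)) = 1$, and $\phi(\lambda_1(\cO_x)) = \phi(\zeta_0+\zeta_1)$; the same conclusion follows.

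I do not expect a genuine obstacle here. The one point deserving care is the extremality of the phase of a sum: inside $\bH$ the phase of $\zeta_0+\zeta_1$ equals $1$ precisely when both summands are negative reals, which relies on $\bH$ being closed under addition and on $\mathrm{Im}$ being additive and nonnegative on $\bH$. One should also be sure to invoke Lemma~\ref{phaseoflambda1Oxandrho2Ox} for the identification $\phi(\rho_2(\cO_x)) = \phi(\zeta_0+\zeta_1)$ rather than re-deriving the additivity along the skyscraper triangle from scratch.
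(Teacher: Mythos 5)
Your proposal is correct and follows essentially the same route as the paper: identify $\phi(\lambda_1(\cO_x))=1$ and $\phi(\rho_2(\cO_x))=\phi(\zeta_0+\zeta_1)$ via Lemma \ref{phaseoflambda1Oxandrho2Ox}, use $\rper(\sigma)=0$ to force $\zeta_0+\zeta_1\in\bR_{<0}$, and conclude from $\mathrm{Im}\,\zeta_i\ge 0$ that both parameters are negative reals, with $m=2$ handled by swapping the roles of $\sigma_1$ and $\sigma_2$. No gaps.
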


\begin{definition}
    Let $\sigma \in \cG l(\sigma_1,\sigma_2)$. For $i=1,2$, we define the following phase values:
    \begin{equation}
        {\rm per}_{i}(\sigma) := \left \{    \begin{aligned}
            &1  & {\rm if \ } \sigma_i {\rm \ is \ standard} \\
            &\phi(Z_i(\cO(k))) - \phi(Z_i(\cO(k)[j+1])) &\quad  {\rm if \ } \sigma_i  {\rm \ is \ quiver}
        \end{aligned} \right .
    \end{equation}
\end{definition}

\vspace{1mm}
\begin{remark}
    \begin{enumerate}
        \item The structure sheaf of a fiber $\cO_f$ is the simple object of $p^*\rCoh(C)$(\cite{Uch} Lemma 3.10). That is, its only proper subobjects is the zero object.
        \item If $\sigma_{gl} \in \cG l(\sigma_1,\sigma_2)$, then for every $\phi \in \bR$, we have $\cP_1(\phi) \subset \cP_{gl}(\phi)$ and $\cP_2(\phi) \subset \cP_{gl}(\phi)$ by \cite{CP} Lemma 2.2 (3). \label{rmk1.Gluingstab}
    \end{enumerate} 
\end{remark}

We now discuss the conditions under which a skyscraper sheaf becomes $\sigma_{gl}$-stable. For this purpose, we establish some preliminary results. 

Let $\sigma =(Z,\cA)$ be a stability condition on either $\rStab(p^*D^b(\bP^1))$ or $\rStab(p^*D^b(\bP^1) \otimes \cO_{\hir}(-C_0))$. If $\sigma$ is standard, $\cO_f$ and $\cO_f(-C_0)[1]$ are simple objects in the corresponding heart (\cite{Uch} Lemma 3.10). However, when $\sigma$ is algebraic, i.e., $\cA = \langle p^*\cO(k-1)[1],p^*\cO(k) \rangle$, it is not immediately clear whether $\cO_f$ and $\cO_f(-C_0)[1]$ are simple in $\cA_1$.
More precisely, since $p^*\cO(k-1)[1]$ and $p^*\cO(k)$ are simple, $\cO_f$ is not simple in $\cP_1(\phi)$ if $\phi(p^*\cO(k-1)[1]) = \phi(\cO_f)$ or $\phi(p^*\cO(k)) = \phi(\cO_f)$. But, the converse does not hold in general.

For convenience, let $\cO_{\hir}(n,m)$ denote the line bundle $\cO_{\hir}(nC_0+mf)$ for $n,m \in \bZ$. Note that $p^*\cO(k) \cong \cO_{\hir}(0,k)$, so the heart $\cA$ can be written in this notation as $\cA = \langle \cO_{\hir}(0,k-1)[1], \cO_{\hir}(0,k) \rangle$. 

Recall that the Hirzebruch surface has a strong full exceptional collection 
$$
      \cE := (E_0,E_1,E_2,E_3) = (\cO_{\hir}(0,k-1),\cO_{\hir}(0,k),\cO_{\hir}(1,k-1+e),\cO_{\hir}(1,k+e)).
$$
Using this, we can construct the heart of a bounded t-structure $\cK \subset D^b(\Sigma_e)$ generated by an Ext-exceptional collection as follows:
$$
    \cK = \langle  \cO_{\hir}(0,k-1)[1],\cO_{\hir}(0,k),\cO_{\hir}(1,k-1+e)[-1],\cO_{\hir}(1,k+e)[-2]  \rangle.
$$
A heart $\cA$ is a subcategory of $\cK$. The general objects of $\cK$ are sequences of the form:
$$
    \bC^{m_0} \otimes \cO_{\hir}(0,k-1) \to \bC^{m_1} \otimes \cO_{\hir}(0,k) \to \bC^{m_2}  \otimes\cO_{\hir}(1,k-1+e) \to \bC^{m_3} \otimes \cO_{\hir}(1,k+e),
$$
whose terms are concentrated in degrees $-1,0,1$ and $2$, and are associated with the dimension vector $[m_0,m_1,m_2,m_3]$. In this context, $\cA$ is described as the subcategory of objects in $\cK$ with a specific dimension vector:
$$
    \cA = \{ E \in \cK \mid E \ {\rm has \ a \ dimension \ vector \ }[m_0,m_1,0,0] \}.
$$
A collection $\cE$ induces the equivalence $\Psi_{\cE} : D^b(\Sigma_e) \to D^b({\textrm{\textbf{Mod}}_A^{\textrm{f.g.}}})$, where $A := \textrm{End}(\oplus_{i=0}^3 E_i)$ is the tilting object of $\cE$ \eqref{Bondalequiv}. Moreover, $\Psi_{\cE}$ induces an isomorphism of Grothendieck groups $\psi : K(\hir) \to K(\textrm{\textbf{Mod}}_A^{\textrm{f.g.}})$. We have coordinates on these Grothendieck groups given by the isomorphisms
$$
    K(\hir) \overset{(\rrank(v),\rdeg_{C_0}(v), \rdeg_f(v),\rch_2(v))}{\xrightarrow[]{}} \quad \bZ^{\oplus 4} \quad  \overset{(m_0,m_1,m_2,m_3)}{\xleftarrow[]{}} K(\textrm{\textbf{Mod}}_A^{\textrm{f.g.}}).
$$
The coordinate transformation between these is given by the matrix
\begin{equation}
    \begin{pmatrix}
        m_0 \\
        m_1 \\
        m_2 \\
        m_3
    \end{pmatrix} = \begin{pmatrix}
        -k  & 1  & k+\frac{e}{2} & -1 \\
        1-k & 1  & k-1+\frac{e}{2} & -1 \\
        0   & 0  & -k-\frac{e}{2} &  1 \\
        0   & 0 & 1-k-\frac{e}{2} &  1
    \end{pmatrix} 
    \begin{pmatrix}
        \rrank(v) \\
        \rdeg_{C_0}(v)\\
        \rdeg_f(v) \\
        \rch_2(v)
    \end{pmatrix}, \label{mat11}
\end{equation}
\begin{equation}    
    \begin{pmatrix}
        \rrank(v) \\
        \rdeg_{C_0}(v) \\
        \rdeg_f(v) \\
        \rch_2(v)
    \end{pmatrix} = \begin{pmatrix}
        -1 & 1 & -1 & 1 \\
        1-k & k & 1-k & k \\
        0 & 0 & -1 & 1 \\
        0 & 0 & 1-k-\frac{e}{2} & k+\frac{e}{2}
    \end{pmatrix} 
    \begin{pmatrix}
        m_0 \\
        m_1 \\
        m_2 \\
        m_3
    \end{pmatrix}. \label{mat12}
\end{equation}
Similarly, $\cA \otimes \cO_{\hir}(-C_0)$ is the subcategory of 
$$
    \cK \otimes \cO_{\hir}(-C_0) = \langle \cO_{\hir}(-1,k-1)[1],\cO_{\hir}(-1,k),\cO_{\hir}(0,k-1+e)[-1],\cO_{\hir}(0,k+e)[-2] \rangle
$$
and the coordinate transformation given by 
\begin{equation}
    \begin{pmatrix}
        m_0 \\
        m_1 \\
        m_2 \\
        m_3
    \end{pmatrix} =  \begin{pmatrix}
        0   &  0 &  k+\frac{e}{2} & -1 \\
        0   &  0 &  k-1+\frac{e}{2} & -1 \\
        -k-e  &  1 &  -k-\frac{e}{2} &  1 \\
        1-k-e &  1 &  1-k-\frac{e}{2} &  1
    \end{pmatrix} 
    \begin{pmatrix}
        \rrank(v) \\
        \rdeg_{C_0}(v) \\
        \rdeg_f(v) \\
        \rch_2(v)
    \end{pmatrix}, \label{mat21}
\end{equation}
\begin{equation}    
    \begin{pmatrix}
        \rrank(v) \\
        \rdeg_{C_0}(v) \\
        \rdeg_f(v) \\
        \rch_2(v)
    \end{pmatrix} =  \begin{pmatrix}
        -1 & 1 & -1 & 1 \\
        1-k-e & k+e & 1-k-e & k+e \\
        1 & -1 & 0 & 0 \\
        k-1+\frac{e}{2} & -k-\frac{e}{2} & 0 & 0
    \end{pmatrix} 
    \begin{pmatrix}
        m_0 \\
        m_1 \\
        m_2 \\
        m_3
    \end{pmatrix}. \label{mat22}
\end{equation}

\vspace{1mm}
\begin{lemma} \label{JH-filtofOf}
    Let $\sigma = (Z,\cA)$ be a quiver stability condition where $\cA = p^*\cA(k)[j]$. Then the following hold:
    \begin{enumerate}
        \item $\cO_f[j]$ is included in $\cA$. 
        \item The only nonzero proper subobjects of $\cO_f[j]$ in $\cA$ are exactly $\cO_{\hir}(0,k-1)[j+1]$ and $\cO_{\hir}(0,k)[j]$. 
        \item $\cO_f$ is strictly $\sigma$-semistable if and only if the equality $\phi(\cO_f)=\phi(\cO_{\hir}(0,k))=\phi(\cO_{\hir}(0,k-1)[1])$ holds. In this case, the Jordan-H$\ddot{o}$lder filtration of $\cO_f[j]$ is given by
        $$
            0 \to \cO_{\hir}(0,k)[j] \to \cO_f[j] \to \cO_{\hir}(0,k-1)[j+1] \to 0.
        $$
    \end{enumerate}
    An analogous statement holds when $\cA$ is replaced by $p^*\cA(k)[j] \otimes \cO_{\hir}(-C_0)$ and $\cO_f[j]$ by $\cO_f(-C_0)[j]$.
 \begin{proof}
        For (1), the statement follows from the existence of the triangle \eqref{eq.trianglofOf}. We prove (2). Assume that $F$ be a subobject of $\cO_f$. By a simple calculation of Chern character, we have $v := \rch(\cO_f) = (0,f,0)$, and hence its class has coordinates $(0,1,0,0)$ in $K(\Sigma_e)$.       
        Applying the transformation \eqref{mat11} yields the corresponding dimension vector $ \underline{\rdim}(v) =[1,1,0,0]$. If $E$ is an object in $\cA$ with dimension vector $[m_0,m_1,m_2,m_3]$, then the dimension vector of any subobject, $[m_0',m_1',m_2',m_3']$, satisfies $m_i' \leq m_i$ for all $i$. Therefore, by applying \eqref{mat11}, the nonzero proper subobjects of $\cO_f$ have the dimension vector either $[1,0,0,0]$ or $[0,1,0,0]$, which correspond to $\cO_{\hir}(0,k-1)[1]$ and $\cO_{\hir}(0,k)$ respectively. The general statement for any shift $j$ follows. The analogous claim for $\cO_f(-C_0)[j]$ is proven similarly, using its Chern character $\rch(\cO_f(-C_0)[j]) = (-1)^j(0,f,-1)$ (so its class has coordinates $(-1)^j(0,1,0,-1)$ in $K(\Sigma_e)$) and using matrix \eqref{mat21}.
        For (3), the strict semistability is a direct consequence of (2). Since $\cO_{\hir}(0,k)[j]$ and $\cO_{\hir}(0,k-1)[j+1]$ are simple in $\cA$, the latter statement follows from (2).
    \end{proof}
\end{lemma}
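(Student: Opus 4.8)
The plan is to transport the whole question to the Kronecker quiver via the equivalence $\Psi_{\cE}$ of Theorem~\ref{Bondalequiv}, reading everything off the coordinate dictionary \eqref{mat11}--\eqref{mat22}. For part (1), I would apply the exact, fully faithful functor $p^{*}$ to the triangle $\cO(k)\to\cO_x\to\cO(k-1)[1]$ of Lemma~\ref{lemOka3.1}(2) and shift by $[j]$, obtaining a triangle $\cO_{\hir}(0,k)[j]\to\cO_f[j]\to\cO_{\hir}(0,k-1)[j+1]$ in $D^b(\Sigma_e)$ (using $p^{*}\cO_x=\cO_f$ and $p^{*}\cO(k)=\cO_{\hir}(0,k)$). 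Its outer terms generate $\cA=p^{*}\cA(k)[j]$, so extension-closedness gives $\cO_f[j]\in\cA$; moreover, since all three terms lie in the heart $\cA$, this triangle is a short exact sequence in $\cA$, which exhibits $\cO_{\hir}(0,k)[j]$ as a subobject of $\cO_f[j]$ with quotient $\cO_{\hir}(0,k-1)[j+1]$.

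For part (2), I would compute $\rch(\cO_f)=(0,f,0)$, so by \eqref{mat11} the class of $\cO_f[j]$ carries dimension vector $[1,1,0,0]$. Under $\Psi_{\cE}$ the heart $\cA$ is identified with the category of finite-dimensional representations of the Kronecker quiver (the $A$-modules supported on the first two vertices): a finite-length abelian category in which the dimension vector is additive, $\bZ_{\ge 0}^{4}$-valued, and, for a subobject, dominated componentwise by the dimension vector of the ambient object. This last fact also shows that $\cA$ is closed under taking subobjects in $\cK$, since a subobject of an object with vanishing last two coordinates again has them vanishing. Hence every nonzero proper subobject of $\cO_f[j]$ has dimension vector $[1,0,0,0]$ or $[0,1,0,0]$, i.e.\ is isomorphic to one of the two simple objects $\cO_{\hir}(0,k-1)[j+1]$, $\cO_{\hir}(0,k)[j]$; combining this with (1), which realizes $\cO_{\hir}(0,k)[j]$ as an actual subobject, while $\rhom_{\Sigma_e}(\cO_{\hir}(0,k-1)[j+1],\cO_f[j])\cong\rExt^{-1}_{\bP^1}(\cO(k-1),\cO_x)=0$ shows the other occurs only as a quotient, pins down the subobject lattice and the composition factors of $\cO_f[j]$.

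Part (3) is then formal: $\cO_f[j]$ has length two, with simple sub $\cO_{\hir}(0,k)[j]$ and simple quotient $\cO_{\hir}(0,k-1)[j+1]$, and $Z(\cO_f[j])=Z(\cO_{\hir}(0,k)[j])+Z(\cO_{\hir}(0,k-1)[j+1])$; since the phase of a sum lies between those of its summands, $\cO_f[j]$ is semistable exactly when $\phi(\cO_{\hir}(0,k)[j])\le\phi(\cO_f[j])$, and fails to be stable precisely when equality holds, which forces all three phases to coincide, so the sequence of (1) is the Jordan--H\"{o}lder filtration. The analogous statement follows verbatim by running the same argument after $-\otimes\cO_{\hir}(-C_0)$: twist Lemma~\ref{lemOka3.1}(2), use $\rch(\cO_f(-C_0)[j])=(-1)^{j}(0,f,-1)$ and the matrix \eqref{mat21} in place of \eqref{mat11}. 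I expect the only real obstacle to be making the dimension-vector step precise — componentwise domination of subobject dimension vectors, closure of $\cA$ under subobjects inside $\cK$, and the identification of an object with dimension vector $[1,0,0,0]$ or $[0,1,0,0]$ with one of the two exceptional objects — which is exactly where the concrete quiver picture of Theorem~\ref{Bondalequiv} and the explicit matrices \eqref{mat11}--\eqref{mat22} are indispensable.
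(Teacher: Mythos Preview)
Your proposal is correct and follows essentially the same route as the paper: the pulled-back triangle for (1), the dimension-vector computation via \eqref{mat11} (and \eqref{mat21} for the twisted case) for (2), and formal consequences of the length-two structure for (3). Your argument is in fact slightly more careful than the paper's, since you verify that only $\cO_{\hir}(0,k)[j]$ actually occurs as a subobject (the other simple admits no nonzero map to $\cO_f[j]$), whereas the paper's proof lists both simples as the candidates arising from the dimension-vector bound without distinguishing which is the sub and which the quotient.
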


\begin{proposition} \label{PropHN-filtofAgl}    
    Let $\sigma_{gl}=(Z_{gl},\cA_{gl}) \in \cG l (\sigma_1,\sigma_2)$. Then the following statements hold:
\begin{enumerate}
      \item The phase of $\cO_f(-C_0)[1]$ is greater than or equal to the phase of $\cO_f$.
	   \item The skyscraper sheaf $\cO_x$ is $\sigma_{gl}$-stable of phase $\phi(\cO_x)$ if and only if $\rper(\sigma_{gl}) > 0$.
	   \item If $per(\sigma_{gl}) = 0$ then $\cO_x$ is strictly semistable, and its Jordan-Hölder filtration in $\mathcal{A}_{gl}$ is given as follows:
        \begin{enumerate}
            \item If ${\rm per}_{i}(\sigma_{gl}) \neq 0$ for $i=1,2$, then the filtration is
            $$
                 0 \to \cO_f \to \cO_x \to \cO_f(-C_0)[1] \to 0.
            $$                
            \item If $\rper_{i}(\sigma_{gl})=0$ for $i=1,2$, the filtration of $\mathcal{O}_x$ consists of the sequence above, followed by the filtrations of its factors: 
            \begin{itemize}                   
                \item $0 \to \cO_{\hir}(kf) \to \cO_f \to \cO_{\hir}((k-1)f)[1] \to 0$
                \item $0 \to \cO_{\hir}(-C_0+kf)[1] \to \cO_f(-C_0)[1] \to \cO_{\hir}(-C_0+(k-1)f)[2] \to 0$
            \end{itemize}
        \end{enumerate}
\end{enumerate}
\begin{proof}
    Let $\phi(E)$ denote the phase of an object $E \in \cA_{gl}$ with respect to $Z_{gl}$. For the assertion (1). For any object $E \in \cA_{gl}$, there is an exact sequence in $\cA_{gl}$ of the form:
    \begin{equation}        
        0 \to \rho_2(E) \to E \to \lambda_1(E) \to 0.  \label{lem3.17eq1}
    \end{equation}
    This implies the phase inequality $\phi(\rho_2(E)) \leq \phi(E) \leq \phi(\lambda_1(E))$. Applying this to $E=\cO_x$ and recalling that $\rho_2(\cO_x) = \cO_f$ and $\lambda_1(\cO_x) = \cO_f(-C_0)[1]$, we immediately prove  (1). To prove the necessity of (2), assume $\cO_x$ is $\sigma_{gl}$-stable. From (1), we knew $\rper(\sigma_{gl}) \geq 0$. If we had ${\rm per}(\sigma_{gl}) = 0$, the phases would be equal: $\phi(\cO_x) = \phi(\cO_f) = \phi(\cO_f(-C_0)[1])$. The exact sequence 
    \begin{equation} \label{exactseqofOx}        
        0 \to \cO_f \to \cO_x \to \cO_f(-C_0)[1] \to 0
    \end{equation}
    shows that $\cO_x$ has a proper subobject $\cO_f$ with the same phase. This contradicts the definition of stability. Therefore, we must have $\rper(\sigma_{gl}) > 0$. For the sufficiency of (2), assume ${\rm per}(\sigma_{gl}) > 0$. Suppose, for the sake of contradiction, that $\cO_{x}$ is not stable in $\cP(\phi(\cO_x))$. Then there must exist a simple subobject $F \subset \cO_x$ such that $\phi(F) = \phi(\cO_x)$. We have an exact sequence in $\cA_{gl}$ of the form:
    \begin{equation}
        \begin{CD}
            0 @>>> F @>>> \cO_x @>>> \cO_x / F @>>> 0 \\
            @. @. @VVV @. @. \\
            0 @>>> \cO_f @>>> \cO_x @>\gamma>> \cO_f(-C_0)[1] @>>> 0.
        \end{CD}
    \end{equation}
    Now, consider the composition of maps $\alpha : F \hookrightarrow \cO_x \overset{\gamma}{\twoheadrightarrow} \cO_f(-C_0)[1]$. 
    We have two cases: 
    \begin{itemize}
        \item Case $\alpha=0$; In this case, $F$ is contained in $\textrm{Ker}(\gamma)$, which means $F$ is a subobject of $\cO_f$. This implies $\phi(F) \leq \phi(\cO_f)$. Since we assumed $\rper(\sigma_{gl})>0$, we have the strictly inequality $\phi(\cO_f) < \phi(\cO_x)$. Combining these gives $\phi(F) < \phi(\cO_x)$, which contradicts our assumption that $\phi(F)=\phi(\cO_x)$.

        \item Case $\alpha \neq 0$; Clearly, $\alpha$ is surjective. Moreover, $\alpha$ must be injective since $F$ is simple. Thus $\alpha$ is an isomorphism, which implies $\phi(F) = \phi(\cO_f(-C_0)[1])$. However, our assumption $\rper(\sigma_{gl}) > 0$ means $\phi(\cO_x) < \phi(\cO_f(-C_0)[1])$. This leads to $\phi(\cO_x) < \phi(F)$, which contradicts the inequality $\phi(F) = \phi(\cO_x)$.
    \end{itemize}
    Since both cases lead to a contradiction, our assumption that $\cO_x$ is not stable must be false. The assertion (3) follows directly from Lemma \ref{JH-filtofOf}, parts (2) and (3).
\end{proof}
\end{proposition}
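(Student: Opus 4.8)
The plan is to deduce all three assertions from the canonical short exact sequence in the glued heart,
$$
    0 \to \rho_2(\cO_x)=\cO_f \to \cO_x \xrightarrow{\ \gamma\ } \lambda_1(\cO_x)=\cO_f(-C_0)[1] \to 0,
$$
obtained from the semiorthogonal triangle \eqref{gluingexactseq}, together with the preliminary remark that both $\cO_f$ and $\cO_f(-C_0)[1]$ are $\sigma_{gl}$-semistable. Indeed, by Lemma \ref{JH-filtofOf} and its $\cO_{\hir}(-C_0)$-twist each of them is either stable or strictly semistable already inside $\cA_2$, resp. $\cA_1$, hence $\sigma_{gl}$-semistable via the inclusions $\cP_i(\phi)\subset\cP_{gl}(\phi)$ (Remark \ref{rmk1.Gluingstab}). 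Assertion (1) is then immediate: in the displayed sequence $\cO_f$ is a subobject and $\cO_f(-C_0)[1]$ a quotient of $\cO_x$ in $\cA_{gl}$, which forces $\phi(\cO_f)\le\phi(\cO_x)\le\phi(\cO_f(-C_0)[1])$.

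For the necessity half of (2): if $\cO_x$ is $\sigma_{gl}$-stable then $\rper(\sigma_{gl})\ge 0$ by (1), and $\rper(\sigma_{gl})=0$ is excluded because, together with (1), it would force $\phi(\cO_f)=\phi(\cO_x)$, so that \eqref{exactseqofOx} would exhibit $\cO_f$ as a proper subobject of $\cO_x$ of the same phase. For the sufficiency, assume $\rper(\sigma_{gl})>0$; from $Z_{gl}(\cO_x)=Z_{gl}(\cO_f)+Z_{gl}(\cO_f(-C_0)[1])$ with the two summands of distinct phase one gets the strict chain $\phi(\cO_f)<\phi(\cO_x)<\phi(\cO_f(-C_0)[1])$. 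Suppose $\cO_x$ were not stable; then there is a simple subobject $F\hookrightarrow\cO_x$ with $\phi(F)=\phi(\cO_x)$, and we examine $\alpha:=\gamma|_F:F\to\cO_f(-C_0)[1]$. If $\alpha=0$, then $F$ is a subobject of $\ker\gamma=\cO_f$, so $\phi(F)\le\phi(\cO_f)<\phi(\cO_x)$: contradiction. If $\alpha\ne 0$, then $\alpha$ is injective since $F$ is simple, its image is a nonzero subobject of $\cO_f(-C_0)[1]$, and using the list of such subobjects from Lemma \ref{JH-filtofOf} together with the vanishing of $\rhom_{D^b(\hir)}$ from the (shifted) line-bundle generators of $\cA_1$ into $\cO_x$ one concludes the image is all of $\cO_f(-C_0)[1]$, so $\alpha$ is an isomorphism and $\phi(F)=\phi(\cO_f(-C_0)[1])>\phi(\cO_x)$: contradiction. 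Hence $\cO_x$ is $\sigma_{gl}$-stable.

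For (3), when $\rper(\sigma_{gl})=0$ all three phases in \eqref{exactseqofOx} coincide, so $\cO_x$ is strictly semistable and \eqref{exactseqofOx} is the first step of a Jordan--H\"older filtration with factors $\cO_f$ and $\cO_f(-C_0)[1]$. If furthermore $\rper_i(\sigma_{gl})\ne 0$ for $i=1,2$, then by Lemma \ref{JH-filtofOf}(3) and its twist $\cO_f$ and $\cO_f(-C_0)[1]$ are $\sigma_{gl}$-stable, so \eqref{exactseqofOx} is already a complete Jordan--H\"older filtration; if $\rper_i(\sigma_{gl})=0$, the same lemma provides the filtrations $0\to\cO_{\hir}(kf)\to\cO_f\to\cO_{\hir}((k-1)f)[1]\to 0$ and its $(-C_0)$-twist, which refine \eqref{exactseqofOx} into the stated four-step filtration.

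The main obstacle I anticipate is the sufficiency half of (2), and within it the case $\alpha\ne 0$: surjectivity of $\alpha$ is not automatic, because $\cO_f(-C_0)[1]$ need not be a simple object of $\cA_{gl}$ once $\sigma_1$ is a quiver stability condition. In that case one must eliminate by hand the possibility that $F$ is one of the exceptional generators $\cO_{\hir}(-C_0+kf)[1]$ or $\cO_{\hir}(-C_0+(k-1)f)[2]$, using that $\rhom_{D^b(\hir)}$ from these into $\cO_x$ vanishes; carrying the bookkeeping of which of $\sigma_1,\sigma_2$ is standard and which is quiver through all of this is the only genuine subtlety, since (1) and (3) become formal once the semistability of $\cO_f,\cO_f(-C_0)[1]$ and Lemma \ref{JH-filtofOf} are available.
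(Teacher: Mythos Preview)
Your proof is correct and follows essentially the same approach as the paper: the canonical exact sequence $0\to\cO_f\to\cO_x\to\cO_f(-C_0)[1]\to 0$, then the case split on the composition $\alpha$ for the sufficiency in (2), and the appeal to Lemma \ref{JH-filtofOf} for (3). In fact you are more careful than the paper in the $\alpha\ne 0$ case: the paper simply asserts ``clearly $\alpha$ is surjective,'' whereas you correctly recognize that $\cO_f(-C_0)[1]$ need not be simple in $\cA_{gl}$ when $\sigma_1$ is quiver, and you supply the missing argument by ruling out the line-bundle generators via $\rhom(\cO_{\hir}(-C_0+kf)[1],\cO_x)=\rhom(\cO_{\hir}(-C_0+(k-1)f)[2],\cO_x)=0$.
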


In general, for a smooth projective surface $S$ over $\bC$, if $\cU$ is the set of geometric stability condition of $\rStab_\Lambda(S)$ (i.e. the skyscraper sheaves are stable of the same phase), then it is a well-known fact that $\cU$ is an open subset of $\rStab_\Lambda(S)$ (\cite{Br08}). Consequentry, the boundary $\partial \cU := \overline{\cU} \setminus \cU$ coincides with the set of stability conditions where all skyscraper sheaves are strictly $\sigma$-semistable. From \ref{PropHN-filtofAgl}, for gluing stability conditions, this strictly $\sigma$-semistability is equivalent to the condition $\rper(\sigma) = 0$.

This suggests the following definition
\begin{definition} \label{dfn.wallofgluingperversity0}
    We define the {\it wall} $\cW_0$ as the subset of $\rStab_\Lambda(\Sigma_e)$ consists of all gluing stability conditions with zero gluing perversity.
\end{definition}

The condition $\rper(\sigma) = 0$ is equivalent to the phase-matching equation
\begin{equation}    
    \rRe Z_{gl}(\lambda_1(\cO_x)) \rIm Z_{gl}(\rho_2(\cO_x)) - \rIm Z_{gl}(\lambda_1(\cO_x)) \rRe Z_{gl}(\rho_2(\cO_x))=0. \label{equationofwall}
\end{equation}
This is a single, non-trivial real equation on the eight-dimensional real manifold $\mathrm{Stab}_\Lambda(\Sigma_e)$. Therefore, its solution set $\cW_0$ forms a real codimension-one submanifold (a 7-dimensional real submanifold) of $\mathrm{Stab}_\Lambda(\Sigma_e)$, which lies on the boundary $\partial \mathcal{U}$.

We denote by $\mathcal{S}_m$ the set of gluing stability conditions of type $m$. We are interested in the trace of this global wall on each specific type:
\begin{definition} \label{localwall}
    For each type $m=1,2,3,4$, we define the subset $\cW_{0,m}$ as the intersection the global wall $\cW_0$ and the subset $\cS_m$: $\cW_0 \cap \cS_m$.
\end{definition}

\subsection{Destabilizing wall of skyscraper sheaves}
In this section, we prove that the set $\cW_{0}$ defined in \eqref{dfn.wallofgluingperversity0}, is a destabilizing wall of skyscraper sheaves. We identify $\rhom_\bZ(K_{num}(\Sigma_e),\bC)$ with $K_{num}(\Sigma_e) \otimes_\bZ \bC$. 

The main purpose of \cite{Uch} is to find the {\it destabilizing wall of skyscraper sheaves} in $\rStab_\Lambda(S)$.
\begin{definition}[\cite{Uch} Definition 2.8] \label{dfn.destabilizingwallofskyscrapersheaves}
    A set $\cW \subset \rStab_\Lambda(S)$ is called a {\it destabilizing wall of skyscraper sheaves} if it satisfies the following properties:
    \begin{enumerate}
        \item $\cW$ is the real codimension one submanifold of $\rStab_\Lambda(S)$. \label{cnd.destabilizingwall1}
        \item For any $\sigma=(Z,\cP) \in \cW$ and any point $x \in S$, there exists an exact sequence $0 \to T \to \cO_x \to F \to 0$ of $\sigma$-semistable objects in $\cP(\phi)$ for some $\phi \in \bR$. \label{cnd.destabilizingwall2}
        \item For any $\sigma=(Z,\cP) \in \cW$, there exists an $\epsilon_0 > 0$ such that if $0 < \epsilon < \epsilon_0$ and $W : \Lambda \to \bC$ satisfying
        $$
            |W(E) - Z(E)| < {\rm sin}(\pi \epsilon) |Z(E)|
        $$
        for all $E$ which $\sigma$-semistable, then there is a geometric stability condition $(W,\cQ)$ with $d(\cP,\cQ) < \epsilon$. \label{cnd.destabilizingwall3}
    \end{enumerate}
\end{definition}
Property (3) requires that a destabilizing wall intersects the boundary of the set of geometric stability conditions.

We have already seen that for any stability condition in $\cW_{0}$, the skyscraper sheaves $\cO_x$ has the Jordan-H$\ddot{o}$lder filtration given in \eqref{PropHN-filtofAgl}. This shows that $\cW_{0}$ has condition (2) of Definition \ref{dfn.destabilizingwallofskyscrapersheaves}. Furthermore, it satisfies condition (1), since the equation \eqref{equationofwall}. 

We will now show that it also satisfies the condition (3). The strategy for proving (3) is to replace the assumption $\phi(\cO_x)=1$ in the proof of \cite{Uch} Lemma 4.2 by a general value.
\begin{proposition}
    For $ \sigma_0 := (Z_0,\cP_0) \in \cW_{0}$, there exists an $\epsilon_0 > 0$ with the following property: for any $0 < \epsilon < \epsilon_0$ and any group homomorphism $W : \Lambda \to \bC$ satisfying 
    \begin{itemize}
        \item $\phi(W(\cO_f(-C_0))[1]) > \phi(W(\cO_f))$
        \item $|W(E) - Z_0(E)| < {\rm sin} (\pi \epsilon) |W(E)|$ for any $E \in D^b(S)$ semistable in $\sigma_0$,
    \end{itemize}
    there exists a unique locally finite geometric stability condition $\sigma = (W,\cQ)$ with $d(\cP_0,\cQ) < \epsilon$.
    \begin{proof}
        The approach is similar to that in \cite{Uch} Lemma 4.2. Let $\sigma_0 = (Z_0, \mathcal{P}_0) \in \mathcal{W}_{0}$. By Proposition \ref{thm.Br07Theorem7.1}, there exists a locally finite stability condition $\sigma = (W,\cQ)$. Let $r = \phi_0(\mathcal{O}_x)$ be the phase of the skyscraper sheaf. We may set the enveloping subcategory to be $\mathcal{P}_0((r-2\epsilon, r+2\epsilon))$. Furthermore, by rotating $\sigma_0$ using the $\mathbb{C}^*$-action on $\mathcal{W}_{0}$, we can assume $r=1$. By construction, the image of $Z_0$ is a discrete subset of $\mathbb{C}$. Therefore, the proof is completed by applying an argument similar to that in \cite{Uch} Lemma 4.2.
    \end{proof}
\end{proposition}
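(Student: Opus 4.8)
The plan is to follow the proof of \cite{Uch} Lemma 4.2, modified so that the skyscraper sheaf is allowed an arbitrary phase. First I would apply the deformation result Proposition \ref{thm.Br07Theorem7.1}: since $\sigma_0=(Z_0,\cP_0)$ is locally finite and, after shrinking $\epsilon_0$ if necessary, the hypothesis $|W(E)-Z_0(E)|<\sin(\pi\epsilon)|W(E)|$ on $\sigma_0$-semistable $E$ implies an inequality $|W(E)-Z_0(E)|<\sin(\pi\epsilon')|Z_0(E)|$ for a comparable $\epsilon'$, there is a locally finite stability condition $\sigma=(W,\cQ)$ with $d(\cP_0,\cQ)<\epsilon$ whose slices are described by the enveloping construction \eqref{eq.envelopedsubcat}. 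Uniqueness of such a $\sigma$ is the injectivity of Bridgeland's local homeomorphism to $\rhom_\bZ(\Lambda,\bC)$, and the support property of $\sigma$ is inherited from $\sigma_0$ since $\|W-Z_0\|$ and $d(\cP_0,\cQ)$ are small. Thus the only substantive point is that $\sigma$ is geometric, i.e. $\cO_x$ is $\sigma$-stable for every $x\in\hir$.

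Since the entire statement is equivariant under the $\bC$-action on $\rStab_\Lambda(\hir)$ — which rotates $\sigma_0$, $W$ and $\cQ$ simultaneously, shifts all phases by a common amount, and hence preserves the Jordan--H\"older structure of $\cO_x$, local finiteness, and the discreteness of $Z_0(\Lambda)$ — I may assume $\phi_0(\cO_x)=1$, and then take the thin enveloping subcategory in \eqref{eq.envelopedsubcat} to be $\cP_0((1-2\epsilon,1+2\epsilon))$, which is legitimate once $4\epsilon<1$. Because $\sigma_0\in\cW_0$, Proposition \ref{PropHN-filtofAgl} supplies the non-split short exact sequence $0\to\cO_f\to\cO_x\to\cO_f(-C_0)[1]\to0$ in $\cA_{gl}$ with both factors $\sigma_0$-semistable of phase $1$; the gluing condition $\rhom^{\leq 0}(\cA_1,\cA_2)=0$ yields $\rhom(\cO_f(-C_0)[1],\cO_f)=0$ (so this presentation is unique) and also $\rhom(\cO_f(-C_0)[1],\cO_x)=0$. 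Both $\cO_f$ and $\cO_f(-C_0)[1]$ are enveloped by $\cP_0((1-2\epsilon,1+2\epsilon))$ and $W$ is close to $Z_0$ on them, so they remain $\sigma$-semistable; writing $\phi_W$ for their $\sigma$-phases, the hypothesis $\phi_W(\cO_f(-C_0)[1])>\phi_W(\cO_f)$ together with $W(\cO_x)=W(\cO_f)+W(\cO_f(-C_0)[1])$ forces $\phi_W(\cO_f)<\tfrac1\pi\arg W(\cO_x)<\phi_W(\cO_f(-C_0)[1])$.

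Finally I would show $\cO_x$ is $\sigma$-stable. If it were not, it would admit a nonzero proper $\sigma$-semistable subobject $F$ with $\phi_W(F)\geq\tfrac1\pi\arg W(\cO_x)$, and since $d(\cP_0,\cQ)<\epsilon$ this $F$ is enveloped by $\cP_0((1-2\epsilon,1+2\epsilon))$. Using the local finiteness of $\sigma_0$ and the discreteness of $Z_0(\Lambda)$ exactly as in \cite{Uch} Lemma 4.2, one sees that the only $\sigma_0$-semistable objects occurring as sub- or quotient objects of $\cO_x$ with phase in $(1-2\epsilon,1+2\epsilon)$ are extensions of $\cO_f$ and $\cO_f(-C_0)[1]$; combining this with $\rhom(\cO_f(-C_0)[1],\cO_x)=0$ and the non-splitness of the sequence above leaves only $F\in\{\cO_f,\cO_x\}$, and the strict inequality $\phi_W(\cO_f)<\tfrac1\pi\arg W(\cO_x)$ excludes $F=\cO_f$, so $F=\cO_x$. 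Hence $\cO_x$ is $\sigma$-stable for all $x$ and $(W,\cQ)$ is geometric. The main obstacle is precisely this last identification of the admissible destabilizers: one must rule out "new" $\sigma$-semistable objects near phase $1$ produced by the perturbation, which is where discreteness of $Z_0(\Lambda)$ and local finiteness of $\sigma_0$ are essential, and where the argument of \cite{Uch} Lemma 4.2 is reused up to the bookkeeping introduced by the normalization $\phi_0(\cO_x)=1$.
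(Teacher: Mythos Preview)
Your proposal is correct and follows essentially the same approach as the paper: apply Bridgeland's deformation result to obtain $(W,\cQ)$, normalize via the $\bC^*$-action so that $\phi_0(\cO_x)=1$, take the enveloping subcategory $\cP_0((1-2\epsilon,1+2\epsilon))$, and then invoke the discreteness of $Z_0(\Lambda)$ together with the argument of \cite{Uch} Lemma~4.2 to conclude that $\cO_x$ is $\sigma$-stable. The paper's own proof is much terser and simply points to \cite{Uch} Lemma~4.2 for the final step, whereas you have spelled out the destabilizer analysis explicitly; but the underlying strategy is identical.
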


\section{Relation to divisorial stability conditions} \label{Relationtodivisorialstability}
In this section, we consider divisorial stability conditions and discuss the destabilizing wall within this space. We begin by fixing our notation:
\begin{equation}
    B \in \rns(\Sigma_e)_\bR, \quad \omega \in \ramp(\Sigma_e)_\bR. \label{eq.divisornotation}
\end{equation}
Recall that for a smooth projective surface $S$ over $\bC$, a stability condition
\begin{equation} \label{divisorialstabilitycondition} 
    \sigma_{\omega,B}=(Z_{\omega,B},\cA_{\omega,B}) 
\end{equation} is called a {\it divisorial stability condition} if its stability function $Z_{\omega,B}$ has the form 
$$
    Z_{\omega,B}(E) = - \int_S \rexp(B+ i \omega)\rch(E)
$$
for some $B \in \rns(S)$ and $\omega \in \ramp(S)$, and its heart $\cA_{\omega,B}$ is constructed as follows:  The $\mu_\omega$-slope of a sheaf $E \in \rCoh(S)$ is defined by
$$
    \mu_{\omega}(E) = \frac{c_1(E). \omega}{\rrank(E)}.
$$
For any $B \in \rns(S)$ and $\omega \in \ramp(S)$, there exists a unique torsion pair $(\cT_{\omega,B}, \cF_{\omega,B})$ on the category $\rCoh(S)$, where $\cT_{\omega,B}$ consists of sheaves whose torsion-free parts have $\mu_\omega$-semistable Harder-Narasimhan factors with slope $\mu_{\omega,B} > B . \omega$, and $\cF_{\omega,B}$ consists of torsion free sheaves on $S$ all of whose $\mu_\omega$-semistable Harder Narasimhan factors have slope $\mu_\omega \leq B.\omega$.  Moreover, the extension closure $\cA_{\omega,B} := \langle \cF_{\omega,B}[1],\cT_{\omega,B} \rangle$ is the heart of a bounded t-structure on $D^b(S)$. For a smooth projective surface $S$ over $\bC$, there is a natural continuous embedding
\begin{equation} \label{divisorialembedding}    
     \ramp(S)_\bR \times \rns(S)_\bR \hookrightarrow \rStab_\Lambda(S)
\end{equation}
given by the correspondence $(\omega,B) \mapsto \sigma_{\omega,B}$. We denote the image of this map by $S_{div}$.

As shown in \cite{Br07} \S 6, $\rStab_\Lambda(S)$ has the natural structure of  a complex manifold of dimension $\rrank(\Lambda)$. Its complex structure is induced by the local homeomorphism
\begin{equation}
    \mathcal{\pi} : \rStab_\Lambda(S) \to \rhom_\bZ(\Lambda,\bC) \label{localhomeo}
\end{equation}
defined by $(Z,\cA) \mapsto Z$. Furthermore, $\Lambda = K_{num}(S)$ possesses a symmetric bilinear form $\langle -,-\rangle_M$, known as the {\it Mukai pairing}, defined for $\rch(E) = (r,c_1,\rch_2)$ and $\rch(E')=(r',c_1',\rch_2')$ as
\begin{equation}    
    \langle E,E'\rangle_M = c_1.c_1' - r'\rch_2 - r\rch_2'. \label{MuakiPairing}
\end{equation}
Recall that for any numerical stability condition $\sigma = (Z,A) \in \rStab_{\Lambda}(S)$, there is a vector $\pi(\sigma) \in \Lambda \otimes_\bZ \bC$ such that 
\begin{equation} \label{numericalstab}
    Z(E) = \langle \pi(\sigma),\rch(E) \rangle_M.
\end{equation}
 (\cite{Oh} \S 3, \cite{Uch} \S2). 

It is a well-known fact that any stability condition $\sigma \in S_{div}$ is geometric. In \cite{Oh}, the author provides the following criterion for a stability condition to be divisorial. 
\begin{proposition} \label{ohkawa.proposition3.6}
    $\sigma \in \rStab(S)$ is dvisorial if and only if 
    \begin{enumerate}
    \item $\sigma$ is geometric, and
    \item There exists $M \in GL^+(2,\bR)$ and $B , \omega\in NS(S)_\bR$, where $\omega$ is ample, such that 
$$
    \pi(\sigma) M = \rexp(B+i \omega)
$$
where $\pi$ is the local homeomorphism given in \eqref{localhomeo}.    
\end{enumerate}
\end{proposition}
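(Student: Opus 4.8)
The plan is to establish the two implications separately; the real content lies in the direction ``(1) and (2) $\Rightarrow$ $\sigma$ divisorial''.

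\emph{Divisorial implies (1) and (2).} Every $\sigma = \sigma_{\omega,B}\in S_{div}$ is geometric — the fact recalled just before the statement — so (1) is free. For (2) I would carry out the direct computation of $\pi(\sigma_{\omega,B}) = Z_{\omega,B}$ via the Mukai pairing \eqref{MuakiPairing} and Grothendieck--Riemann--Roch: expanding $-\int_{\Sigma_e}\exp(B+i\omega)\rch(E)$ in $\rch(E) = (r,c_1,\rch_2)$ and the intersection form on $\Sigma_e$ identifies the vector $\pi(\sigma_{\omega,B})\in\Lambda\otimes_\bZ\bC$ with the class $\exp(B+i\omega)$ up to a fixed element of $GL^+(2,\bR)$; taking $M$ to be that element yields $\pi(\sigma_{\omega,B})M = \exp(B+i\omega)$ with $\omega$ ample, which is (2).

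\emph{(1) and (2) imply divisorial.} Given a geometric $\sigma = (Z,\cP)$ with $\pi(\sigma)M = \exp(B+i\omega)$, I would first normalize the central charge using the $\widetilde{GL^+(2,\bR)}$-action on $\rStab_\Lambda(\Sigma_e)$: choosing a lift of $M$ (or of $M^{-1}$, according to the sign convention for the action on central charges) and acting on $\sigma$ produces a stability condition $\sigma'$ whose central charge is $Z_{\omega,B}$. The key observation is that $\sigma'$ is \emph{again geometric}, since the $\widetilde{GL^+(2,\bR)}$-action merely reparametrizes phases and hence preserves the property that all skyscraper sheaves are stable of a common phase. It therefore suffices to prove the rigidity statement that a geometric stability condition with central charge $Z_{\omega,B}$ must coincide with $\sigma_{\omega,B}$; granting this, $\sigma' = \sigma_{\omega,B}$, and transporting the equality back through the $\widetilde{GL^+(2,\bR)}$-element exhibits $\sigma$ as a (translate of a) divisorial stability condition.

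\emph{The rigidity step, which I expect to be the main obstacle.} Here one must recover the heart of $\sigma'$. Since $Z_{\omega,B}(\cO_x) = -1$, every skyscraper $\cO_x$ lies in $\cP'((0,1])$ with phase $1$, and by the standard analysis of geometric stability conditions on surfaces $\cP'((0,1])$ is then a tilt $\langle\cF[1],\cT\rangle$ of $\rCoh(\Sigma_e)$ at a torsion pair. The torsion pair is determined by the sign of $\rIm Z_{\omega,B}$ on $\mu_\omega$-semistable torsion-free sheaves, and this sign is controlled precisely by the $B$-twisted $\omega$-slope; so the only possibility is $(\cT,\cF) = (\cT_{\omega,B},\cF_{\omega,B})$, whence $\cP'((0,1]) = \cA_{\omega,B}$. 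With heart and central charge now both matching those of $\sigma_{\omega,B}$, the Harder--Narasimhan filtrations — hence the entire slicing — are determined, giving $\sigma' = \sigma_{\omega,B}$. The delicate point will be the borderline sheaves $E$ with $\rIm Z_{\omega,B}(E) = 0$, i.e. those $\mu_\omega$-semistable of slope exactly $B.\omega$, which a priori could be placed in either $\cT$ or $\cF$: showing that the placement forced by $\sigma'$ being a genuine stability condition is the one defining $\cA_{\omega,B}$ requires the Bogomolov inequality and the Hodge index theorem for the ample class $\omega$, and this is where the hypotheses ``$\sigma$ geometric'' and ``$\omega$ ample'' really enter.
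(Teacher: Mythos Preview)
The paper does not actually prove this proposition: it is quoted as a known criterion from \cite{Oh} and no argument is given. There is therefore nothing in the paper to compare your attempt against.

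That said, your outline is the standard route and is correct in its essentials. A couple of remarks. First, the statement is for a general smooth projective surface $S$, while you occasionally specialize to $\Sigma_e$; nothing in your argument uses the Hirzebruch structure, so this is cosmetic. Second, in the forward direction you should be slightly more careful: one checks directly that $Z_{\omega,B}(E) = \langle \exp(B+i\omega), \rch(E)\rangle_M$ (up to the sign convention in \eqref{MuakiPairing}), so $\pi(\sigma_{\omega,B}) = \exp(B+i\omega)$ already, and $M$ may be taken to be the identity --- no ``fixed element of $GL^+(2,\bR)$'' is needed. Third, your identification of the rigidity step as the main content is exactly right, and your plan for it (classify hearts of geometric stability conditions as tilts of $\rCoh(S)$, then pin down the torsion pair via the sign of $\rIm Z_{\omega,B}$ on $\mu_\omega$-semistable sheaves, with Bogomolov and Hodge index handling the boundary slope) is precisely the argument carried out in the literature (e.g.\ \cite{Br08} for K3 surfaces, and its surface analogues). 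The borderline case you flag is genuine and is where ampleness of $\omega$ is used.
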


In the case of Hirzebruch surface, $S_{div}$ can be identified with the following positive cone via the embedding \eqref{divisorialembedding}:
\begin{equation} \label{Sdivtocone}    
    S_{div} = \{ (x,y,z,w) \in \bR^4 \mid z > 0, \ w > ze \}
\end{equation}
where $e = \rdeg \ \cE$, $B = xC_0+yf$, and $\omega = zC_0+wf$ is ample. This set is open in its closure $\overline{S_{div}}$, which is the locus within the nef cone defined by $z \geq 0$ and $w \geq ze$. We denote the boundaries by $\partial_z = \{ (x,y,z,w) \mid z =0, \ w \geq 0 \}$ and $\partial_w = \{ (x,y,z,w) \mid  z \geq0, \ w = ze \}$. The boundary of the closure can then be written as
\begin{equation} 
    \partial \overline{S_{div}} = \partial_z \cup \partial_w.\label{boundaryofwall}
\end{equation}
Recall that $\cW_{0,m}$ is defined as the set of gluing stability conditions of glued type $m$ with gluing perversity zero \eqref{localwall}. We will perform an explicit calculation of $\partial \overline{S_{div}} \cap \cW_{0,m}$, following the method in \cite{Uch} Theorem 4.4.

In \cite{Uch}, the author calculates the image of the local homeomorphism $\pi : \rStab(S)_\Lambda \to \rhom(\Lambda,\bC)$ for a gluing stability condition $\sigma$ in the case where $g(C)>0$.
\begin{lemma}[\cite{Uch} Proposition 3.5] \label{Uch.prop3.5}
    Let $M = \begin{pmatrix}
        a & b \\ c & d
    \end{pmatrix}\in GL^+(2,\bR)$. Suppose that $\sigma_1$ be the stability condition on $p^*D^b(C) \otimes \cO_{\hir}(-C_0)$ and $\sigma_2$ be the standard stability condition on $p^*D^b(C)$. Then gluing stability condition $\sigma_{gl} = (Z_{gl},\cP_{gl})$ glued from $\sigma_1M$ and $\sigma_2$ satisfies
    \begin{equation*}
        \pi(\sigma) = \left(1-a-ic, -C_0+ \left(-\frac{1}{2} e(a+1) -b + i \left(-\frac{1}{2}ce+1-d \right) \right)f, -i \right)
    \end{equation*}
\end{lemma}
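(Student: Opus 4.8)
The plan is to determine the vector $\pi(\sigma)\in\Lambda\otimes_\bZ\bC$ directly from its defining property $Z_{gl}(E)=\mup{\pi(\sigma)}{\rch(E)}$ (see \eqref{numericalstab}), which makes sense because the glued stability condition is numerical. Write $\pi(\sigma)=(\rho,\delta,s)$ in the decomposition $\mnH^*_{alg}(\hir)\otimes\bC=\mnH^0\oplus\rns_\bC\oplus\mnH^4$; expanding the Mukai pairing \eqref{MuakiPairing} turns the defining identity into
\begin{equation*}
    Z_{gl}(E)=\delta.c_1(E)-s\,\rrank(E)-\rho\,\rch_2(E).
\end{equation*}
So it suffices to write $Z_{gl}$ explicitly as a $\bC$-linear functional of $\rch(E)=(r,c_1,\rch_2)$ and then read off $\rho$, $\delta$, $s$ by matching the coefficients of $\rrank(E)$, $c_1.C_0$, $c_1.f$ and $\rch_2$; equivalently, one may evaluate both sides on the four classes $\rch(\cO_{\hir})$, $\rch(\cO_{\hir}(C_0))$, $\rch(\cO_f)$, $\rch(\cO_x)$ and solve the resulting linear system.

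To make $Z_{gl}$ explicit I would expand $Z_{gl}=Z_1\circ\lambda_1+Z_2\circ\rho_2$ one summand at a time. The adjoint functors $\lambda_1,\rho_2$ are given by closed formulas in Lemma \ref{Uch.Proposition3.3}, so Grothendieck--Riemann--Roch along $p$ (exactly as in Proposition \ref{Prop_rankdegofadjointfunctor}) expresses $r_*=\rch_0(*(E))$ and $d_*=\rch_1(*(E))$ for $*\in\{\lambda_1,\rho_2\}$ as $\bZ$-linear functions of $(r,\,c_1.C_0,\,c_1.f,\,\rch_2)$, using the intersection numbers $C_0^2=-e$, $C_0.f=1$, $f^2=0$; along the way one checks that the canonical-type twist $(2g-2+\rdeg\cE)f$ inside $\lambda_1$ and the factor $T_C$ (of degree $2-2g$) contribute genus terms that cancel, so that the answer depends only on $e=-\rdeg\cE$. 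Since $\sigma_2=\sigma_{st}$, we have $Z_2(\rho_2(E))=-d_{\rho_2}+i\,r_{\rho_2}$. For the other summand, $\sigma_1$ is the induced standard condition on $\cD_1$ acted on by $M=\begin{pmatrix}a&b\\c&d\end{pmatrix}$, so unwinding the $GL^+(2,\bR)$-action on central charges, $Z_1(\lambda_1(E))$ is obtained by applying $M$ to the real $2$-vector $(-d_{\lambda_1},\,r_{\lambda_1})$: its real part is $-a\,d_{\lambda_1}+b\,r_{\lambda_1}$ and its imaginary part is $-c\,d_{\lambda_1}+d\,r_{\lambda_1}$.

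Summing the two contributions and collecting coefficients gives $Z_{gl}(E)$ as an explicit complex-linear form in $\rch(E)$; comparing it with $\delta.c_1-s\,\rrank(E)-\rho\,\rch_2(E)$ (and using $\delta.c_1=\delta_1(c_1.C_0)+\delta_2(c_1.f)$ for $\delta=\delta_1C_0+\delta_2f$) should recover the claimed values. The coefficient of $\rch_2$ yields the $\mnH^0$-component $\rho=1-a-ic$, its $M$-dependence entering through the twisted $d_{\lambda_1}$; the coefficient of $\rrank(E)$ pins down the $\mnH^4$-component $s=-i$, contributed solely by $r_{\rho_2}$ (neither $\rch_0$ nor $\rch_1$ of $\lambda_1(E)$ involves the rank); and the coefficients of $c_1.C_0$ and $c_1.f$ give $\delta=-C_0+\bigl(-\tfrac12 e(a+1)-b+i(-\tfrac12 ce+1-d)\bigr)f$, with the $C_0$-coefficient equal to $-1$ regardless of $M$.

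The routine but delicate part of this argument is the Grothendieck--Riemann--Roch bookkeeping: one must carefully track the shift $[1]$ inside $\lambda_1$ (which negates the Chern character), the full chain of twists ($-C_0$, $(2g-2+\rdeg\cE)f$, $T_C$ and the outer $\otimes\cO_{\hir}(-C_0)$), the pushforward $p_*$ of a degree-two cycle, the Todd classes of $\hir$ and $C$, and the sign conventions coming from the shifts of the glued heart. One also has to fix the precise normalization of the $GL^+(2,\bR)$-action (here $M$ acts directly on the pair $(\rRe Z,\rIm Z)$, with no factor of $\det M$) so that the entries $a,b,c,d$ appear in the final formula exactly as stated. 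The mild surprise that the genus drops out --- a consequence of the $\bP^1$-fibration having relative dualizing sheaf independent of $g(C)$ --- is what makes the closed form above as clean as it is; once $Z_{gl}$ is in hand as a linear functional on $\rch(E)$, extracting $\pi(\sigma)$ is immediate.
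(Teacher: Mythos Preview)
The paper does not prove this lemma; it is quoted verbatim from \cite{Uch} Proposition~3.5. However, your approach is exactly the method the paper itself uses to carry out the parallel computations of $\pi(\sigma)$ for glued types $m=1,2,3$ in Propositions~\ref{cofficientsofpisigma1}, \ref{cofficientsofpisigma2}, and~\ref{cofficientsofpisigma3}: expand $Z_{gl}$ as a linear form in $(r,c_1.C_0,c_1.f,\rch_2)$ using Proposition~\ref{Prop_rankdegofadjointfunctor} for $r_{\lambda_1},d_{\lambda_1},r_{\rho_2},d_{\rho_2}$, then match coefficients against $\langle\pi(\sigma),\rch(E)\rangle_M$. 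Your sanity checks (the rank $r$ only enters through $r_{\rho_2}$, giving $s=-i$; the $\rch_2$-coefficient is $a-1+ic$, giving $\rho=1-a-ic$) are correct, and your remark that the genus contributions from the twist $(2g-2+\rdeg\cE)f$ and from $T_C$ cancel is the reason the formula is valid for arbitrary $g(C)$, not just the Hirzebruch case treated in this paper.

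One small caution: in this particular lemma there is no extra $(-1)^{j_i}$ from heart shifts --- the shift~$[1]$ already sits inside the formula for $\lambda_1$ in Lemma~\ref{Uch.Proposition3.3} and is absorbed into the signs of $r_{\lambda_1},d_{\lambda_1}$ as computed in Proposition~\ref{Prop_rankdegofadjointfunctor}. So the calculation goes through with $Z_1(\lambda_1(E))$ obtained by applying $M$ directly to $(-d_{\lambda_1},r_{\lambda_1})$, as you describe, without further sign corrections.
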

This calculation corresponds to our case for glued type $m=4$. However, for the cases where $m=1$, $2$, and $3$, which involve at least one quiver stability condition, the definition of the central charge differs. Therefore, the computation of $\pi(\sigma)$ must be adjusted accordingly.

Recall that if $\cA_{gl}$ is glued from $\cA_1[j_1]$ and $\cA_{2}[j_2]$, we fix the shift degrees as $j_1 = 1$ and $j_2 = 0$. As explained in the previous section, the stability function $Z_{gl,m}$ can be written as 
\begin{equation} \label{eq.Zgl1}
Z_{gl,1} = (-1)(- d_{\lambda_1}(E)  + ir_{\lambda_1}(E))  +(d_{\rho_2}-k r_{\rho_2} )\zeta_0 + (d_{\rho_2} + (1-k) r_{\rho_2})\zeta_1
\end{equation}
\begin{equation} \label{eq.Zgl2}
Z_{gl,2} = -d_{\rho_2}(E) + ir_{\rho_2}(E)+ (-1)((d_{\lambda_1}-k r_{\lambda_1} )\zeta_0 + (d_{\lambda_1} + (1-k) r_{\lambda_1})\zeta_1)
\end{equation}
\begin{align} \label{eq.Zgl3}
        Z_{gl,3} = (-1)(d_{\lambda_1}-k r_{\lambda_1} )\zeta_0 &+ (d_{\lambda_1} + (1-k) r_{\lambda_1})\zeta_1) \\ 
        &+ (d_{\rho_2}-k r_{\rho_2} )\zeta'_0 + (d_{\rho_2} + (1-k) r_{\rho_2})\zeta'_1.
\end{align} More precisely, 
\begin{eqnarray*} 
    & & d_{\lambda_1} - k r_{\lambda_1} =  -\rch_2 + (\frac{1}{2}e+k) c_1.f\\
    & &d_{\rho_2} -k r_{\rho_2} = \rch_2 + c_1.C_0 + (\frac{1}{2}e-k) c_1.f - kr. \\
\end{eqnarray*}
We now calculate the image of the local homeomorphism $\pi(\sigma)$ for these stability conditions. As explained in \eqref{numericalstab}, any stability function $Z$ associated with a stability condition in $\mathrm{Stab}_\Lambda(\Sigma_e)$ can be written as $Z(E) = \langle \pi(\sigma)(E),\rch(E) \rangle_M$ (\cite{Uch} \S 2). Therefore, computing $\pi(\sigma)$ reduces to solving the equation:
\begin{equation}
    Z_{gl,m} = \langle \pi(\sigma),\rch\rangle_M. \label{eq.Zglm=Mukaipairing}
\end{equation}
Before proceeding with the calculation, let us establish some general properties that hold for any divisorial stability condition. Let $\pi(\sigma) = (\xi_0,\xi_1,\xi_2,\xi_3)$ be a solution to the above equation. By the proposition \ref{ohkawa.proposition3.6}, $\sigma$ is divisorial if and only if 
$$
    \pi(\sigma)M = e^{B+ i \omega}
$$
for some $B = xC_0+yf$, $\omega = zC_0 + fw$ with $\omega$ ample, and $M= \begin{pmatrix}
    a & b \\
    c & d
\end{pmatrix} \in GL^+(2,\bR)$. Viewing both sides as vectors in the  $8$-dimensional real vector space $\mnH^0(\Sigma_e,\bC) \oplus \rns(\Sigma_e)_\bC \oplus \mnH^4(\hir,\bC)$, the left-hand side can be written as 
\begin{equation}
    \pi(\sigma)M = \left( 
       \begin{pmatrix} a\rRe \xi_i + b \rIm \xi_i \\ c \rRe \xi_i + d \rIm \xi_i\end{pmatrix} \right)_{i=0,1,2,3}.
\end{equation}
where we have decomposed $\pi(\sigma)$ into components corresponding to $[1], [C_0], [f], [\textrm{pt}]$. Similary, the right-hand side is 
\begin{equation}
    e^{B+i \omega} = \left (\begin{pmatrix} 1 \\ 0 \end{pmatrix},\begin{pmatrix} x \\ z \end{pmatrix} C_0,\begin{pmatrix} y \\ w \end{pmatrix}f, \begin{pmatrix} \frac{1}{2}((z^2-x^2)e + 2(xy-zw)) \\ yz+xw-xze \end{pmatrix} \textrm{pt} \right).
\end{equation}

Forcusing on the imaginaly part of the first term(corresponding to $\mnH^0(\Sigma_e,\bC)$-components), we find $c \rRe \xi_0 + d \rIm \xi_0 = 0$. This implies that a vector $\begin{pmatrix}
    c \\ 
    d
\end{pmatrix}$ is orthogonal to $\begin{pmatrix}
    \rRe \xi_0 \\ 
    \rIm \xi_0
\end{pmatrix}$. 
Thus, there exists a non-zero proportiona
lity constant $t \in \bR$ such that $c = -t \rIm  \xi_0$ and $d = t \rRe \xi_0$. Substituiting these into the expression for $z = c \rRe \xi_1 + d \rIm \xi_1$ and $w = c \rRe \xi_2 + d \rIm \xi_2$ yields 
\begin{equation} \label{eq.z}
    z = t  \cdot \rdet \begin{pmatrix}
        \rRe \xi_0 & \rRe \xi_1 \\
        \rIm \xi_0 & \rIm \xi_1
    \end{pmatrix},
\end{equation}
\begin{equation} \label{eq.w}
    w = t  \cdot \rdet \begin{pmatrix}
        \rRe \xi_0 & \rRe \xi_2 \\
        \rIm \xi_0 & \rIm \xi_2
    \end{pmatrix}.
\end{equation}
For convinience, we denote $\rdet(\xi_{i,j}) =  \rdet \begin{pmatrix}
        \rRe \xi_i & \rRe \xi_j \\
        \rIm \xi_i & \rIm \xi_j
    \end{pmatrix}$. The ampleness of $\omega$ requiers $z >0$ and $w > ze$, which leads to the following conclusion.
\begin{lemma} \label{lem.condw>ze}
    Let $\sigma$ be an arbitary stability condition on $\rStab_\Lambda(\Sigma_e)$. If $\sigma$ is divisorial, then the following inequality hold:
    $$
        \frac{\rdet (\xi_{0,2}) - e \cdot \rdet (\xi_{0,1})}{\rdet(\xi_{0,1})} > 0
    $$    
    \begin{proof}
        The inequality is obtained by substituting the expressions for $z$ and $w$ from \eqref{eq.z} and \eqref{eq.w} into the ampleness condition $z > 0$ and $w > ze$.
    \end{proof}
\end{lemma}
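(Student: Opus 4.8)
The plan is to unwind the divisoriality criterion of Proposition \ref{ohkawa.proposition3.6} and simply read off the two ampleness inequalities it forces. Write $\pi(\sigma) = (\xi_0,\xi_1,\xi_2,\xi_3)$ in the basis $[1],[C_0],[f],[\mathrm{pt}]$ of $K_{num}(\hir)\otimes\bC$. Since $\sigma$ is assumed divisorial, there is some $M = \begin{pmatrix} a & b \\ c & d \end{pmatrix} \in GL^+(2,\bR)$ with $\pi(\sigma)M = e^{B+i\omega}$, where $B = xC_0+yf$ and $\omega = zC_0+wf$ is ample. First I would compare the $\mnH^0(\hir,\bC)$-components of both sides and take imaginary parts: the right-hand side has imaginary part $0$ there, so $c\,\rRe\xi_0 + d\,\rIm\xi_0 = 0$, whence $(c,d) = t\,(-\rIm\xi_0,\,\rRe\xi_0)$ for some $t \in \bR$. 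Because $M \in GL^+(2,\bR)$ is invertible, $(c,d)\ne(0,0)$, so $t \neq 0$.

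Next I would substitute $c = -t\,\rIm\xi_0$, $d = t\,\rRe\xi_0$ into the second coordinates of the $[C_0]$- and $[f]$-components of $\pi(\sigma)M$, matching them against $z$ and $w$ respectively. This yields exactly the identities
$$
    z = t\cdot\rdet(\xi_{0,1}), \qquad w = t\cdot\rdet(\xi_{0,2})
$$
recorded in \eqref{eq.z} and \eqref{eq.w}, in the notation $\rdet(\xi_{i,j}) = \rdet\begin{pmatrix}\rRe\xi_i & \rRe\xi_j \\ \rIm\xi_i & \rIm\xi_j\end{pmatrix}$. With these in hand, ampleness of $\omega$ gives $z > 0$ and $w - ze > 0$. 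From $z > 0$ we get $t\,\rdet(\xi_{0,1}) > 0$, which in particular shows $\rdet(\xi_{0,1}) \neq 0$, so the quotient in the statement is well defined; from $w - ze > 0$ we get $t\big(\rdet(\xi_{0,2}) - e\,\rdet(\xi_{0,1})\big) > 0$. Dividing, the nonzero factor $t$ cancels and
$$
    \frac{\rdet(\xi_{0,2}) - e\,\rdet(\xi_{0,1})}{\rdet(\xi_{0,1})} = \frac{t\big(\rdet(\xi_{0,2}) - e\,\rdet(\xi_{0,1})\big)}{t\,\rdet(\xi_{0,1})} = \frac{w-ze}{z} > 0,
$$
which is the asserted inequality.

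I do not expect any genuine obstacle here: the lemma is essentially a repackaging of $z>0$ and $w>ze$ after the substitution $(c,d)=t(-\rIm\xi_0,\rRe\xi_0)$. The only points requiring a little care are the sign bookkeeping in the coordinate expansion of $e^{B+i\omega}$ (already tabulated in the excerpt) and the remark that $t\neq0$, which is what legitimizes cancelling $t$ in the final display and guarantees $\rdet(\xi_{0,1})\neq0$ so that the ratio makes sense.
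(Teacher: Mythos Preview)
Your argument is correct and follows exactly the paper's approach: you substitute the expressions $z = t\,\rdet(\xi_{0,1})$ and $w = t\,\rdet(\xi_{0,2})$ from \eqref{eq.z}--\eqref{eq.w} into the ampleness conditions $z>0$, $w>ze$, and cancel the nonzero factor $t$. Your explicit remark that $t\ne 0$ (ensuring $\rdet(\xi_{0,1})\ne 0$ and legitimizing the division) is a nice point of care that the paper leaves implicit.
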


We will compute an equation \eqref{eq.Zglm=Mukaipairing} and determine the boundary $\partial \overline{S_{div}} \cap \cW_{0,m}$ sepalately for each case of $m=1,2,3$.

\subsection{The case of $m=1$}
We now compute the image of $\pi(\sigma)$ for the case $m=1$. Recall that a stablity condition $\sigma_{gl,1} = (Z_{gl,1},\cA_{gl,1})$ of this type is constructed by: 
$$
    Z_{gl,1} = (-1)(- d_{\lambda_1}(E)  + ir_{\lambda_1}(E))  +(d_{\rho_2}-k r_{\rho_2} )\zeta_0 + (d_{\rho_2} + (1-k) r_{\rho_2})\zeta_1
$$
$$
    \cA_{gl,1} = \cG l(p^* \cA(k)[1] \otimes \cO_{\hir}(-C_0), p^*\rCoh(\bP^1)).
$$
where $\zeta_j = x_j+iy_j \in \mathbb{H}$ for $j=0,1$.

\begin{proposition} \label{cofficientsofpisigma1} 
    Let $\sigma = \sigma_{gl,1}$. Then the corresponding element $\pi(\sigma) \in \rhom_\bZ(\Lambda,\bC)$ has comopnents $(\xi_0,\xi_1,\xi_2, \xi_3)$ given by:   
    \begin{eqnarray*} &\xi_0& = 1-(x_0+x_1)-i(y_0+y_1), \\
        &\xi_{1}& = x_0+x_1 + i(y_0+y_1), \\
        &\xi_{2}& = \frac{1}{2}e+(\frac{1}{2}e-k)x_0+(\frac{1}{2}e-k+1)x_1+i(1+(\frac{1}{2}e-k)y_0+(\frac{1}{2}e-k+1)y_1), \\
        &\xi_3& = kx_0-(1-k)x_1+i(ky_0-(1-k)y_1). 
    \end{eqnarray*}
    \begin{proof}
        Our goal is to solve the equation 
        $$
            Z_{gl,1} = \langle \pi(\sigma),\rch(E)\rangle_M
        $$
        for $\pi(\sigma)$. Let $\rch(E) = (r,c_1,\rch_2)$ and let components of $\pi(\sigma)$ be $(\xi_0,\xi_1C_0+\xi_2f,\xi_3)$. The right-hand side of the equation, the Mukai pairing, can be written as
        $$
             \langle \pi(\sigma),\rch(E)\rangle_M = c_1(\xi_1C_0+\xi_2f) - r\xi_3 - \rch_2\xi_0.
        $$
        The left-hand side,$Z_{gl,1}$, can be expanded using the formulas from \eqref{eq.Zgl1}:
        $$
            Z_{gl,1} = (-1)(- d_{\lambda_1} + ir _{\lambda_1} )+ (d_{\rho_2} - kr_{\rho_2})\zeta_0 + (d_{\rho_2} - (1-k) r_{\rho_2})\zeta_1.
        $$ 
        By substituiting these formulas and rearranging the terms with respect to the components of Chern characters $(r,c_1,\rch_2)$, we get:
       \begin{eqnarray*}
             Z_{gl,1}&=&((\zeta_0+\zeta_1)C_0+(\frac{1}{2}e+(\frac{1}{2}e-k)\zeta_0+(\frac{1}{2}e-k+1)\zeta_1+i)f)c_1 \\
            &-& \left(k \zeta_0-(1-k)\zeta_1 \right)r \\
            &-& (1-\zeta_0-\zeta_1)\rch_2.
        \end{eqnarray*}
        By compairing the cofficients $r$, $c_1$ and $\rch_2$ on both sides, we obtaine the expressions for $\xi_0,\xi_1,\xi_2$ and $\xi_3$ as follows:
         \begin{eqnarray*} &\xi_0& = 1-(x_0+x_1)-i(y_0+y_1), \\
        &\xi_{1}& = x_0+x_1 + i(y_0+y_1), \\
        &\xi_{2}& = \frac{1}{2}e+(\frac{1}{2}e-k)x_0+(\frac{1}{2}e-k+1)x_1+i(1+(\frac{1}{2}e-k)y_0+(\frac{1}{2}e-k+1)y_1), \\
        &\xi_3& = kx_0-(1-k)x_1+i(ky_0-(1-k)y_1). 
    \end{eqnarray*}

    \end{proof}
\end{proposition}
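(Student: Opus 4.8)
The plan is to solve, for every $E \in D^b(\Sigma_e)$, the identity $Z_{gl,1}(E) = \langle \pi(\sigma),\rch(E)\rangle_M$ for the unknown vector $\pi(\sigma) = (\xi_0,\ \xi_1 C_0 + \xi_2 f,\ \xi_3) \in K_{num}(\Sigma_e)\otimes_\bZ\bC$. Such a vector exists because $\sigma_{gl,1}$ is a numerical stability condition, so $Z_{gl,1}$ is represented by a Mukai pairing as in \eqref{numericalstab}; and it is uniquely determined once we note that the four linear functionals $E\mapsto r$, $E\mapsto c_1.C_0$, $E\mapsto c_1.f$, $E\mapsto \rch_2$ form a basis of $\rhom(K_{num}(\Sigma_e),\bQ)$ (since $K_{num}(\Sigma_e)\cong\bZ^{\oplus 4}$ with $\rns$ generated by $C_0,f$). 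Hence it suffices to expand both sides in these coordinates and match coefficients.

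First I would expand the left-hand side. Starting from \eqref{eq.Zgl1} and substituting the rank/degree formulas of Proposition \ref{Prop_rankdegofadjointfunctor} — conveniently, the combinations $d_{\lambda_1}-k r_{\lambda_1} = -\rch_2 + (\tfrac12 e + k)\,c_1.f$ and $d_{\rho_2}-k r_{\rho_2} = \rch_2 + c_1.C_0 + (\tfrac12 e - k)\,c_1.f - kr$, together with the variant of the latter with $1-k$ in place of $-k$ in $r_{\rho_2}$, are already recorded just before the statement — and then regrouping terms by $r$, $c_1.C_0$, $c_1.f$, $\rch_2$, I obtain the displayed expansion of $Z_{gl,1}$.

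Next I would expand the right-hand side using the definition \eqref{MuakiPairing} of the Mukai pairing. The one small computation to do carefully is the intersection-theoretic identity $(\xi_1 C_0 + \xi_2 f).c_1 = \xi_1\,(c_1.C_0) + \xi_2\,(c_1.f)$, obtained by writing $c_1 = \alpha C_0 + \beta f$ and using $C_0^2 = -e$, $C_0.f = 1$, $f^2 = 0$; it says precisely that on the $\rns$-part the Mukai pairing is diagonal for the pairing of $(C_0,f)$ against $(c_1.C_0, c_1.f)$. Granting this, $\langle \pi(\sigma),\rch(E)\rangle_M = \xi_1\,(c_1.C_0) + \xi_2\,(c_1.f) - \xi_3\, r - \xi_0\,\rch_2$, and matching the four coefficients against the expansion of $Z_{gl,1}$ forces $\xi_0 = 1-\zeta_0-\zeta_1$, $\xi_1 = \zeta_0+\zeta_1$, $\xi_2 = \tfrac12 e + (\tfrac12 e - k)\zeta_0 + (\tfrac12 e - k + 1)\zeta_1 + i$ and $\xi_3 = k\zeta_0 - (1-k)\zeta_1$. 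Substituting $\zeta_j = x_j + i y_j$ and separating real and imaginary parts yields the asserted formulas.

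The argument is essentially mechanical, so there is no conceptual obstacle; the only places warranting attention are (i) the form of the divisor part of the Mukai pairing on $\Sigma_e$ noted above, which is where the intersection numbers enter, and (ii) keeping the coefficient bookkeeping consistent through the substitution of the adjoint-functor formulas — in particular handling the shift signs $(-1)^{j_1}$ and $(-1)^{j_2}$ correctly with the normalization $j_1 = 1$, $j_2 = 0$ (Remark \ref{rmk.degreeofshiftofgluedheart}) already in force.
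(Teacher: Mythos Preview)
Your proposal is correct and follows essentially the same approach as the paper: expand $Z_{gl,1}$ in the Chern-character coordinates $(r,\,c_1.C_0,\,c_1.f,\,\rch_2)$ using Proposition \ref{Prop_rankdegofadjointfunctor}, expand the Mukai pairing, and match coefficients. You supply a bit more care than the paper does (the uniqueness remark, the explicit verification of $(\xi_1 C_0+\xi_2 f).c_1 = \xi_1(c_1.C_0)+\xi_2(c_1.f)$, and the handling of the shift signs), but the argument is the same mechanical comparison.
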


\begin{thm} \label{thm.boundaryofW01}
    The intersection $\partial \overline{S_{div}} \cap \cW_{0,1}$ is presisely $\cW_{0,1}$. In particular, $\mathcal{W}_{0,1}$ is contained entirely within the boundary component $\partial_{z}$ and does not intersect the vertex of $\partial \overline{S_{div}}$. 
    \begin{proof}
        By definition of $\mathcal{W}_{0,1}$, any stability condition $\sigma \in \mathcal{W}_{0,1}$ satisfies $\mathrm{per}(\sigma)=0$. Lemma \ref{lem.phaseofzeta0zeta1} then implies that
        \begin{center}
            $r := x_0+x_1 \in \mathbb{R}_{< 0}$ and $y_0=y_1 = 0$ \label{eq.thmboundaryofW01}
        \end{center} 
       Substituiting these into the formulas from Proposition \ref{cofficientsofpisigma1}, we find the first two components of $\pi(\sigma)$ to be $\xi_0=2$ and $\xi_1 = r$. Since both $\xi_0$ and $\xi_1$ are real, we have $\rIm \xi_0=\rIm \xi_1=0$. The condition for a point to lie on the boundary component $\partial_z$ is $z=0$, which, from \eqref{eq.z}, is equivalent to $\rdet(\xi_{0,1})=0$. We can verify this:
       $\rdet(\xi_{0,1}) = \begin{pmatrix}
        \rRe \xi_0 & \rRe \xi_1 \\
        \rIm \xi_0 & \rIm \xi_1
        \end{pmatrix} = \begin{pmatrix}
        2 & r \\
        0 & 0
        \end{pmatrix} = 0.$
        Thus, any $\sigma \in \cW_{0,1}$ lies on $\partial_z$. The vertex of $\partial \overline{S_{div}}$ is locus where both $z=0$ and $w=ze=0$ hold. From equation \eqref{eq.w}, the condition $w=0$ is equivalent to $\rdet(\xi_{0,2})=0$. Using $y_0=y_1=0$, the imagenary part of $\xi_2$ is 
        $$
            \rIm \xi_2=1+(\frac{1}{2}e-k)y_0+(\frac{1}{2}e-k+1)y_1 = 1.
        $$
        Therefore, 
        $$
            \mathrm{det}(\xi_{0,2}) = \mathrm{det} \begin{pmatrix} \mathrm{Re}\,\xi_0 & \mathrm{Re}\,\xi_2 \\ \mathrm{Im}\,\xi_0 & \mathrm{Im}\,\xi_2 \end{pmatrix} = \mathrm{det} \begin{pmatrix} 2 & \mathrm{Re}\,\xi_2 \\ 0 & 1 \end{pmatrix} = 2 \neq 0.
        $$
        Since $w \neq 0$, the wall $\mathcal{W}_{0,1}$ does not intersect the vertex $\partial_z \cap \partial_w$.
    \end{proof}
\end{thm}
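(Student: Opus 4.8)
The plan is to follow the computational template set up in the preamble of Section~\ref{Relationtodivisorialstability} (itself modeled on \cite{Uch} Theorem 4.4): for $\sigma \in \cW_{0,1}$ I will write down the vector $\pi(\sigma) = (\xi_0,\xi_1,\xi_2,\xi_3)$ explicitly using Proposition~\ref{cofficientsofpisigma1}, and then read off the boundary invariants $z$ and $w$ through the determinant formulas \eqref{eq.z} and \eqref{eq.w}.

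First I would unwind the hypothesis. By Definition~\ref{localwall}, a point $\sigma \in \cW_{0,1}$ is a gluing stability condition of glued type $1$ with $\rper(\sigma) = 0$, whose second factor is a quiver stability condition $\sigma_{k,\zeta_0,\zeta_1}$ with $\zeta_j = x_j + iy_j$. Since $\rper(\sigma) = 0$ and $m = 1$, Lemma~\ref{lem.phaseofzeta0zeta1} forces $\zeta_0,\zeta_1 \in \bR_{<0}$, i.e.\ $y_0 = y_1 = 0$ and $r := x_0 + x_1 < 0$. Feeding $y_0 = y_1 = 0$ into Proposition~\ref{cofficientsofpisigma1} makes the first two coordinates of $\pi(\sigma)$ real: $\rIm\,\xi_0 = \rIm\,\xi_1 = 0$, with $\rRe\,\xi_0 = 1 - r > 0$ and $\rRe\,\xi_1 = r < 0$. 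Hence $\rdet(\xi_{0,1}) = 0$, because its bottom row vanishes, and \eqref{eq.z} gives $z = t\cdot\rdet(\xi_{0,1}) = 0$. Thus $\sigma$ lies on the boundary component $\partial_z$ of $\overline{S_{div}}$, and together with the trivial inclusion $\partial\overline{S_{div}}\cap\cW_{0,1}\subseteq\cW_{0,1}$ this gives the first assertion $\partial\overline{S_{div}}\cap\cW_{0,1} = \cW_{0,1}$.

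Next I would exclude the vertex $\partial_z\cap\partial_w$, which is the locus $z = w = 0$. Plugging $y_0 = y_1 = 0$ into the formula for $\xi_2$ gives $\rIm\,\xi_2 = 1$, so
\[
    \rdet(\xi_{0,2}) = \rdet\begin{pmatrix}\rRe\,\xi_0 & \rRe\,\xi_2 \\ 0 & 1\end{pmatrix} = \rRe\,\xi_0 = 1 - r \neq 0.
\]
Since the proportionality constant $t$ of \eqref{eq.w} is nonzero, $w = t\cdot\rdet(\xi_{0,2}) \neq 0$, so $\sigma \notin \partial_w$; in particular $\sigma$ is not the vertex. (If one insists on checking that $\sigma$ genuinely lies in $\overline{S_{div}}$ rather than merely satisfying $z=0$, one picks the sign of $t$, equivalently of $M \in GL^+(2,\bR)$, so that $w > 0$ and $\omega = wf$ is nef; this is the only freedom used.)

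I do not expect a deep obstacle: once Proposition~\ref{cofficientsofpisigma1} and Lemma~\ref{lem.phaseofzeta0zeta1} are in hand the rest is bookkeeping. The only point that genuinely needs care is that $\rIm\,\xi_0 = \rIm\,\xi_1 = 0$ must be \emph{forced} and not merely arranged --- this is precisely what Lemma~\ref{lem.phaseofzeta0zeta1} supplies, since it concludes that $\zeta_0,\zeta_1$ are honest negative reals --- and that $t \neq 0$, so that the vanishing of $\rdet(\xi_{0,1})$ and the non-vanishing of $\rdet(\xi_{0,2})$ really do control $z$ and $w$.
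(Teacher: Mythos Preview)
Your proposal is correct and follows exactly the same route as the paper: invoke Lemma~\ref{lem.phaseofzeta0zeta1} to force $y_0=y_1=0$, substitute into Proposition~\ref{cofficientsofpisigma1}, and read off $z$ and $w$ via the determinants $\rdet(\xi_{0,1})$ and $\rdet(\xi_{0,2})$. In fact your arithmetic is cleaner: the paper writes $\xi_0=2$, but Proposition~\ref{cofficientsofpisigma1} gives $\xi_0=1-r$, which is what you have (the conclusion is unaffected since $1-r>0$ either way).
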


\begin{figure}[htbp]
  \centering 
  \begin{tikzpicture}
    \draw[->, thick] (-1, 0) -- (5, 0) node[below left] {$z$};
    \draw[->, thick] (0, -1) -- (0, 5) node[below left] {$w$};
    \node[below left] at (0,0) {$O$};

    \fill[blue!20] (0,0) -- (0,4) -- (4,4) -- cycle;
    \node[below left] at (1.7,3) {$S_{div}$};
    \draw[blue, very thick] (0,4) -- (0,0) node[midway,right] {$\partial_z$};
    \draw[blue, very thick] (0,0) -- (4,4) node[midway,right] {$\partial_w$};
    \draw[red, very thick] (0,3) -- (0,1) node[midway,left] {$\cW_{0,m}$};

    \node[above, rotate=45] at (2.5, 2.5) {$w=ze$};
    \node[above right] at (0, 4) {$z=0$};
  \end{tikzpicture}

  \caption{The region $\overline{S_{div}}$ for glued type $m=1,2$ at a fixed point $(x,y) \in \bR^2$, and the wall $\cW_{0,m}$ contained within the boundary $\partial_z$.}
  \label{fig:zw_region12} 
\end{figure}
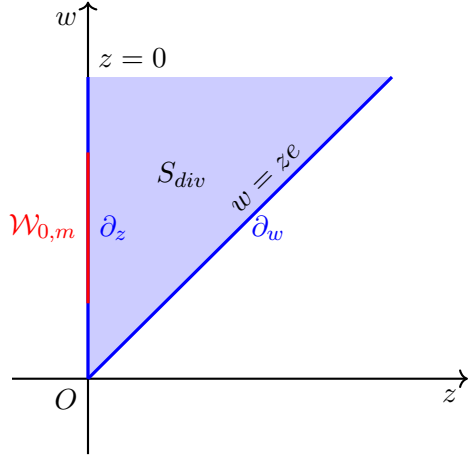

\subsection{The case of $m=2$}
Next, we compute the components of $\pi(\sigma)$ for the case $m=2$. A stability condition $\sigma_{gl,2}=(Z_{gl,2},\mathcal{A}_{gl,2})$ of this type is constructed by
$$
    Z_{gl,2} = -d_{\rho_2} + i r_{\rho_2} + (-1)((d_{\lambda_1}-k r_{\lambda_1} )\zeta_0 + (d_{\lambda_1} + (1-k) r_{\lambda_1})\zeta_1)
$$
and
$$
    \cA_{gl,2} = \cG l(p^* \rCoh(\bP^1)[1] \otimes \cO_{\hir}(-C_0), p^*\cA(k)).
$$
where $\zeta_j = x_j + iy_j \in \bH$ for $j=0,1$.

\begin{proposition} \label{cofficientsofpisigma2} 
    Let $\sigma = \sigma_{gl,2}$. Then the corresponding element $\pi(\sigma) \in \rhom_\bZ(\Lambda,\bC)$ has comopnents $(\xi_0,\xi_1,\xi_2, \xi_3)$ given by:   
    \begin{eqnarray*} &\xi_0& = 1-(x_0+x_1)-i(y_0+y_1), \\
        &\xi_{1}& = -1, \\
        &\xi_{2}& = -\frac{1}{2}e-((\frac{1}{2}e+k)x_0+(\frac{1}{2}e+k-1)x_1)+i(1-((\frac{1}{2}e+k)y_0+(\frac{1}{2}e+k-1)y_1)), \\
        &\xi_3& = -i. 
    \end{eqnarray*}

    \begin{proof}
        Following the same method as in the proof of Proposition \ref{cofficientsofpisigma1}. We solve the equation $Z_{gl,2} = \langle \pi(\sigma),\mathrm{ch}(E) \rangle_{M}$ for $\pi(\sigma) = (\xi_0, \xi_1 C_0 + \xi_2 f, \xi_3)$. The left-hand side, $Z_{gl,2}$, can be expanded using Proposition \eqref{eq.Zgl2}. Rearranging the terms with respect to the components of $\mathrm{ch}(E) = (r, c_1, \mathrm{ch}_2)$ yields:
        \begin{align*} 
            Z_{gl,2} =  & ( -C_{0}+(-\frac{1}{2}e-((\frac{1}{2}e+k)x_{0}+(\frac{1}{2}e+k-1)x_{1}) \\
                        & \qquad \qquad + i(1-((\frac{1}{2}e+k)y_{0}+(\frac{1}{2}e+k-1)y_{1})))f )c_1 \\ 
                        & - (-i)r \\ 
                        & - (1-(x_{0}+x_{1})-i(y_{0}+y_{1}))\mathrm{ch}_2. 
        \end{align*}
        By comparing the coefficients of $r$, $c_1$ and $\mathrm{ch}_2$ with those of $\langle \pi(\sigma),\mathrm{ch}(E) \rangle_{M} = c_1 \cdot (\xi_1 C_0 + \xi_2 f) - r\xi_3 - \mathrm{ch}_2\xi_0$, we obtain the expressions for $\xi_0, \xi_1, \xi_2$, and $\xi_3$.
        \begin{eqnarray*} &\xi_0& = 1-(x_0+x_1)-i(y_0+y_1), \\
        &\xi_{1}& = -1, \\
        &\xi_{2}& = -\frac{1}{2}e-((\frac{1}{2}e+k)x_0+(\frac{1}{2}e+k-1)x_1)+i(1-((\frac{1}{2}e+k)y_0+(\frac{1}{2}e+k-1)y_1)), \\
        &\xi_3& = -i. 
        \end{eqnarray*}

    \end{proof}
\end{proposition}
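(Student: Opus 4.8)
The plan is to repeat, \emph{mutatis mutandis}, the coefficient-matching argument from the proof of Proposition \ref{cofficientsofpisigma1}. Write $\rch(E) = (r, c_1, \rch_2)$ and let the sought vector be $\pi(\sigma) = (\xi_0,\ \xi_1 C_0 + \xi_2 f,\ \xi_3)$. The Mukai pairing \eqref{MuakiPairing} then expands as
$$
    \langle \pi(\sigma), \rch(E)\rangle_M = c_1.(\xi_1 C_0 + \xi_2 f) - r\,\xi_3 - \rch_2\,\xi_0,
$$
which is $\bC$-linear in $(r, c_1, \rch_2)$; the task is to expand $Z_{gl,2}(E)$ in the same variables and identify the four coefficients.

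To make $Z_{gl,2}$ explicit I would start from \eqref{eq.Zgl2} and substitute the rank/degree formulas $r_{\lambda_1} = -c_1.f$, $d_{\lambda_1} = -\rch_2 + \frac{1}{2}e\,c_1.f$, $r_{\rho_2} = c_1.f + r$, $d_{\rho_2} = \rch_2 + c_1.C_0 + \frac{1}{2}e\,c_1.f$ from Proposition \ref{Prop_rankdegofadjointfunctor}, together with the auxiliary identities $d_{\lambda_1} - k r_{\lambda_1} = -\rch_2 + (\frac{1}{2}e + k)\,c_1.f$ and $d_{\lambda_1} + (1-k) r_{\lambda_1} = -\rch_2 + (\frac{1}{2}e + k - 1)\,c_1.f$. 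Writing $\zeta_j = x_j + i y_j$ and collecting the terms multiplying $r$, $c_1.C_0$, $c_1.f$ and $\rch_2$ should yield the expansion in which the coefficient of $r$ is $i$, the coefficient of $\rch_2$ is $-\bigl(1 - (x_0+x_1) - i(y_0+y_1)\bigr)$, and the coefficient of $c_1$ is $-C_0$ plus the $f$-term whose real part is $-\frac{1}{2}e - \bigl((\frac{1}{2}e+k)x_0 + (\frac{1}{2}e+k-1)x_1\bigr)$ and whose imaginary part is $1 - \bigl((\frac{1}{2}e+k)y_0 + (\frac{1}{2}e+k-1)y_1\bigr)$. Matching these against the Mukai expansion gives $\xi_3 = -i$, $\xi_0 = 1 - (x_0+x_1) - i(y_0+y_1)$, $\xi_1 = -1$, and the asserted value of $\xi_2$.

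There is no conceptual content beyond this linear algebra; the only hazard is bookkeeping. One must apply correctly the signs $(-1)^{j_1}$, $(-1)^{j_2}$ coming from the fixed shift degrees $j_1 = 1$, $j_2 = 0$ (Remark \ref{rmk.degreeofshiftofgluedheart}) --- these are already built into \eqref{eq.Zgl2}. More importantly, in the $m=2$ case the \emph{standard} summand of $Z_{gl,2}$ is contributed by $\rho_2$ while the \emph{quiver} summand is contributed by $\lambda_1$, which is the reverse of the situation in Proposition \ref{cofficientsofpisigma1}; consequently the roles of $(r_{\lambda_1}, d_{\lambda_1})$ and $(r_{\rho_2}, d_{\rho_2})$ are interchanged, and this swap is precisely what produces $\xi_1 = -1$ and $\xi_3 = -i$ here in place of the $m=1$ values. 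I expect the sign-tracking in this swap to be the main place a computational error could creep in.
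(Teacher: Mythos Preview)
Your proposal is correct and follows exactly the same coefficient-matching approach as the paper: expand $Z_{gl,2}$ via \eqref{eq.Zgl2} and Proposition \ref{Prop_rankdegofadjointfunctor}, collect terms in $r$, $c_1.C_0$, $c_1.f$, $\rch_2$, and read off $\xi_0,\xi_1,\xi_2,\xi_3$ by comparison with the Mukai pairing. Your remark about the swap of the standard and quiver summands relative to the $m=1$ case is the only point requiring care, and you have identified it correctly.
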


\begin{thm} \label{thm.boundaryofW02}
    The boundary $\partial \overline{S_{div}} \cap \cW_{0,2}$ is presisely $\cW_{0,2}$. In particular, $\cW_{0,2}$ exactly contained in $z$-boundary $\partial_z$ \eqref{boundaryofwall}, and it does not intersects the vertex of $\partial \overline{S_{div}}$.
    \begin{proof}
        Replace heart $\cA_1$ with $\cA_2$, and proceed with the same argument as for the proof of Theorem \ref{thm.boundaryofW01}.
    \end{proof}
\end{thm}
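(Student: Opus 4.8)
The plan is to run the argument of Theorem~\ref{thm.boundaryofW01} almost verbatim, the only structural change being that Proposition~\ref{cofficientsofpisigma2} replaces Proposition~\ref{cofficientsofpisigma1} (the quiver factor now sitting in $\cD_1$ rather than $\cD_2$, i.e.\ ``$\cA_1$ plays the role previously played by $\cA_2$''). First I would fix $\sigma = \sigma_{gl,2} \in \cW_{0,2}$, so that $\rper(\sigma) = 0$. Since Lemma~\ref{lem.phaseofzeta0zeta1} is stated uniformly for $m \in \{1,2\}$, it applies here and forces the quiver parameters onto the negative real axis: $y_0 = y_1 = 0$ and $x_0 + x_1 < 0$. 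Substituting $y_0 = y_1 = 0$ into Proposition~\ref{cofficientsofpisigma2} then gives $\xi_0 = 1-(x_0+x_1)$ and $\xi_1 = -1$, both real, together with $\rIm\,\xi_2 = 1$ and $\xi_3 = -i$.

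Next I would feed this data into the relations \eqref{eq.z} and \eqref{eq.w}, which (derived just before Lemma~\ref{lem.condw>ze} and valid along $\overline{S_{div}}$) express $z = t\,\rdet(\xi_{0,1})$ and $w = t\,\rdet(\xi_{0,2})$. From $\rIm\,\xi_0 = \rIm\,\xi_1 = 0$ we obtain $\rdet(\xi_{0,1}) = 0$, hence $z = 0$, so $\sigma$ lies in the stratum $\partial_z$; moreover $\sigma$ cannot lie in the interior $S_{div}$, which is consistent with Proposition~\ref{PropHN-filtofAgl}(3) (when $\rper(\sigma) = 0$ the skyscraper $\cO_x$ is strictly semistable, so $\sigma$ is not geometric and hence not divisorial by Proposition~\ref{ohkawa.proposition3.6}). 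To rule out the vertex $\partial_z \cap \partial_w$ I would then compute
\[
    \rdet(\xi_{0,2}) = \rdet\begin{pmatrix} \rRe\,\xi_0 & \rRe\,\xi_2 \\ \rIm\,\xi_0 & \rIm\,\xi_2 \end{pmatrix} = \rdet\begin{pmatrix} 1-(x_0+x_1) & \rRe\,\xi_2 \\ 0 & 1 \end{pmatrix} = 1-(x_0+x_1) > 0,
\]
the strict inequality coming from $x_0+x_1 < 0$. Hence $w = t\,\rdet(\xi_{0,2}) \ne 0 = ze$ by \eqref{eq.w}, so $\sigma \notin \partial_w$. Thus $\cW_{0,2} \subseteq \partial_z \setminus \partial_w$; since $\partial\overline{S_{div}} \cap \cW_{0,2} \subseteq \cW_{0,2}$ is automatic, we conclude $\partial\overline{S_{div}} \cap \cW_{0,2} = \cW_{0,2}$.

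The only genuinely new bookkeeping relative to the case $m=1$ is that the $(k,e)$-dependent coefficients appearing in $\xi_2$ for this glued type are different; the substance of the check is simply that, after imposing $y_0 = y_1 = 0$, the imaginary parts of $\xi_0$ and $\xi_1$ still vanish and $\rIm\,\xi_2$ is still the nonzero constant $1$, which is exactly what Proposition~\ref{cofficientsofpisigma2} yields. I do not anticipate a structural obstacle; the one point that needs the same care as in Theorem~\ref{thm.boundaryofW01} is that the formulas \eqref{eq.z}--\eqref{eq.w} were obtained for divisorial stability conditions and are applied here to $\sigma \in \cW_{0,2}$, which is legitimate because these relations extend continuously to $\overline{S_{div}}$ and, by the results of the preceding section, $\cW_0$ sits on the boundary of the geometric locus.
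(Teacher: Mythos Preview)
Your proposal is correct and follows essentially the same approach as the paper's proof, which simply instructs one to rerun the argument of Theorem~\ref{thm.boundaryofW01} with the roles of $\cA_1$ and $\cA_2$ exchanged. You have carried this out explicitly: the key point, exactly as in the $m=1$ case, is that Lemma~\ref{lem.phaseofzeta0zeta1} forces $y_0=y_1=0$, whence Proposition~\ref{cofficientsofpisigma2} yields $\rIm\,\xi_0=\rIm\,\xi_1=0$ (giving $z=0$) and $\rIm\,\xi_2=1$ (giving $\rdet(\xi_{0,2})=1-(x_0+x_1)\neq 0$, so $w\neq 0$).
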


\subsection{The case of $m=3$}
In this case, we observe $\sigma_{gl,3} = (Z_{gl,3},\cA_{gl,3})$ : 
$$
    Z_{gl,3} = (-1)((d_{\lambda_1}-k r_{\lambda_1} )\zeta_0 + (d_{\lambda_1} + (1-k) r_{\lambda_1})\zeta_1)  + (d_{\rho_2}-k' r_{\rho_2} )\zeta'_0 + (d_{\rho_2} + (1-k') r_{\rho_2})\zeta'_1
$$
$$
    \cA_{gl,3} = \cG l(p^* \cA(k)[1] \otimes \cO_{\hir}(-C_0), p^*\cA(k'))
$$
where $\zeta_j = x_j + iy_j \in \bH$ for $j=0,1$.

\begin{proposition} \label{cofficientsofpisigma3} 
     Let $\sigma = \sigma_{gl,3}$. Then the corresponding element $\pi(\sigma) \subset \rhom_\bZ(\Lambda,\bC)$ has component $(\xi_0,\xi_1, \xi_2, \xi_3)$ given by:   
    \begin{align*} 
        &\xi_0 = -(\zeta_0+\zeta_1+\zeta_0'+\zeta_1'), \\
        &\xi_{1} = \zeta_0'+\zeta_1', \\
        &\xi_{2} = -((\frac{1}{2}e+k)\zeta_0+(\frac{1}{2}e+k-1)\zeta_1)+ (\frac{1}{2}e-k')\zeta_0'+(\frac{1}{2}e-k'+1)\zeta_1' \\
        &\xi_3 = k \zeta'_0 + (k-1) \zeta'_1. 
    \end{align*}
 
    \begin{proof}
        We first expand $Z_{gl,3}$ (assuming $j_1=1, j_2=0$) in terms of $\mathrm{ch}(E)=(r,c_1,\mathrm{ch}_2)$ by substituting the formulas from Proposition \ref{Prop_rankdegofadjointfunctor} and \eqref{convDimvectToChern}. As in the cases m=1, 2, we solve the equation $Z_{gl,3}=\langle\pi(\sigma),\mathrm{ch}(E)\rangle_{M}$ for $\pi(\sigma)$.
        \begin{align*}
            Z_{gl,3} = & -((d_{\lambda_{1}}-k r_{\lambda_{1}}) \zeta_{0}+(d_{\lambda_{1}}+(1-k) r_{\lambda_{1}}) \zeta_{1}) \\
            & + ((d_{\rho_{2}}-k^{\prime} r_{\rho_{2}}) \zeta_{0}^{\prime}+(d_{\rho_{2}}+(1-k^{\prime}) r_{\rho_{2}})\zeta_{1}^{\prime})
        \end{align*}
        Rearranging the terms with respect to $r, c_1$, and $\mathrm{ch}_2$ yields:
        \begin{align*}
            Z_{gl,3} = & ((\zeta_{0}^{\prime}+\zeta_{1}^{\prime}) C_{0} + (-((\frac{1}{2} e+k) \zeta_{0}+(\frac{1}{2} e+k-1) \zeta_{1}) + (\frac{1}{2} e-k^{\prime}) \zeta_{0}^{\prime} +(\frac{1}{2} e-k^{\prime}+1) \zeta_{1}^{\prime})f) c_1 \\
            & - (k \zeta'_{0}+(k-1)\zeta'_{1})r \\
            & - (-(\zeta_{0}+\zeta_{1}) - (\zeta_{0}^{\prime}+\zeta_{1}^{\prime})) \mathrm{ch}_{2}
        \end{align*}
        We compare this to the Mukai pairing $\langle\pi(\sigma),\mathrm{ch}(E)\rangle{M} = c_1 \cdot (\xi_1 C_0 + \xi_2 f) - r\xi_3 - \mathrm{ch}2\xi_0$. By comparing the coefficients of $r, c_1$, and $\mathrm{ch}_2$ on both sides, we obtain the expressions:
        \begin{align*}
            \xi_{0} &= -(\zeta_{0}+\zeta_{1}+\zeta_{0}^{\prime}+\zeta_{1}^{\prime}), \\
            \xi_{1} &= \zeta_{0}^{\prime}+\zeta_{1}^{\prime}, \\
            \xi_{2} &= -((\tfrac{1}{2}e+k)\zeta_{0}+(\tfrac{1}{2}e+k-1)\zeta_{1})+(\tfrac{1}{2}e-k^{\prime})\zeta_{0}^{\prime}+(\tfrac{1}{2}e-k^{\prime}+1)\zeta_{1}^{\prime}, \\
            \xi_{3} &= k\zeta'_{0}+(k-1)\zeta'_{1}.
        \end{align*}
    \end{proof}
\end{proposition}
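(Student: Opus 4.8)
The plan is to solve the defining relation $Z_{gl,3}(E) = \langle \pi(\sigma), \rch(E)\rangle_M$ for the unknown class $\pi(\sigma) = (\xi_0, \xi_1 C_0 + \xi_2 f, \xi_3)$, following verbatim the strategy used for the cases $m=1$ and $m=2$ in Propositions~\ref{cofficientsofpisigma1} and~\ref{cofficientsofpisigma2}. First I would expand both sides as $\bC$-linear functionals in the Chern character coordinates $(r, c_1, \rch_2)$ of $E$. Using the Mukai pairing \eqref{MuakiPairing} together with $C_0^2 = -e$, $C_0.f = 1$, $f^2 = 0$, the right-hand side becomes
$$
    \langle \pi(\sigma), \rch(E)\rangle_M = c_1 \cdot (\xi_1 C_0 + \xi_2 f) - r\xi_3 - \rch_2\,\xi_0.
$$
For the left-hand side I substitute into $Z_{gl,3}$ (with the fixed shifts $j_1 = 1$, $j_2 = 0$, cf.\ Remark~\ref{rmk.degreeofshiftofgluedheart}) the explicit values of $r_{\lambda_1}, d_{\lambda_1}, r_{\rho_2}, d_{\rho_2}$ from Proposition~\ref{Prop_rankdegofadjointfunctor}, together with the combinations $d_{\lambda_1} - k r_{\lambda_1} = -\rch_2 + (\tfrac12 e + k)\,c_1.f$ and $d_{\rho_2} - k' r_{\rho_2} = \rch_2 + c_1.C_0 + (\tfrac12 e - k')\,c_1.f - k' r$ recorded immediately before the statement.

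Next I would collect the resulting expression for $Z_{gl,3}$ according to the coefficients of $r$, of $c_1$ (a class in $\rns(\hir)_\bC$, hence decomposed further into its $C_0$-part and $f$-part via the intersection pairing), and of $\rch_2$. Matching the four resulting linear identities against the right-hand side above reads off $\xi_1 = \zeta_0' + \zeta_1'$ from the $C_0$-coefficient of $c_1$, $\xi_3 = k\zeta_0' + (k-1)\zeta_1'$ from the $r$-coefficient, $\xi_0 = -(\zeta_0 + \zeta_1 + \zeta_0' + \zeta_1')$ from the $\rch_2$-coefficient, and the asserted formula for $\xi_2$ from the $f$-coefficient of $c_1$.

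There is no conceptual obstacle; the argument is a linear-algebra bookkeeping exercise, and the only points requiring care are organizational. One must keep the two quiver parameters distinct throughout: $k$ is attached to the first summand $p^*\cA(k)[1]\otimes\cO_{\hir}(-C_0)$ (so it enters through $\lambda_1$ and the charges $\zeta_0,\zeta_1$), while $k'$ is attached to the second summand $p^*\cA(k')$ (entering through $\rho_2$ and $\zeta_0',\zeta_1'$); and one must track the sign $(-1)^{j_1} = -1$ multiplying the entire $\lambda_1$-contribution in \eqref{eq.Zgl3}. The step most prone to an arithmetic slip is the $\xi_2$ computation, where the $f$-coefficient of $c_1\cdot(\xi_1 C_0 + \xi_2 f)$ picks up contributions from both $c_1.C_0$ and $c_1.f$ and must be reconciled with all the $c_1.f$-dependence hidden inside $d_{\lambda_1}$ and $d_{\rho_2}$; I would sanity-check the final $\xi_2$ by specializing to $e = 0$ and $k = k' = 0$ and comparing with the $m=1,2$ formulas.
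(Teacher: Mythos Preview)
Your approach is essentially identical to the paper's: both expand $Z_{gl,3}$ in terms of $(r,c_1,\rch_2)$ using Proposition~\ref{Prop_rankdegofadjointfunctor} and the dimension-vector conversion, then read off $\xi_0,\xi_1,\xi_2,\xi_3$ by matching coefficients against the Mukai pairing $c_1\cdot(\xi_1C_0+\xi_2f)-r\xi_3-\rch_2\xi_0$. The organizational caveats you flag (distinguishing $k$ from $k'$, tracking the sign $(-1)^{j_1}=-1$) are exactly the points of care the paper's computation implicitly handles.
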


We denote $Z_{gl,3}|_{\cA_i} := Z_i$ for $i=1,2$ and denote $\cO(k) := p^*\cO_{\bP^1}(k)$ for short. Recall that by construction, we have $\zeta_{0} = Z_1(\cO(k-1)[1])$ and $\zeta_{1} = Z_1(\cO(k))$. Similarly, $\zeta'_{0} = Z_2(\cO(k-1)[1])$ and $\zeta'_{1} = Z_2(\cO(k))$. 
\begin{thm} \label{thm.m=3}
    The intersection $\partial \overline{S_{div}} \cap \cW_{0,3}$ is precisely $\cW_{0,3}$. In particular, $\cW_{0,3}$ is exactly contained in the boundary component $\partial_z$ \eqref{boundaryofwall}, and it intersects the vertex of $\partial \overline{S_{div}}$ if and only if  
    $$
        \rdet \begin{pmatrix}
             \rRe \zeta_0 & \rRe \zeta_1 \\
             \rIm \zeta_0 & \rIm \zeta_1
        \end{pmatrix} -         
        \rdet \begin{pmatrix}
             \rRe (\zeta_0+\zeta_1) & \rRe \zeta'_0 \\
             \rIm (\zeta_0+\zeta_1) & \rIm \zeta'_0
        \end{pmatrix}=0.
    $$
    \begin{proof}
        We will use the notation $\rdet(z_0,z_1) = \begin{pmatrix}
             \rRe z_0 & \rRe z_1 \\
             \rIm z_0 & \rIm z_1
        \end{pmatrix}$ for complex numbers $z_0,z_1$. Notice that, $\sigma \in \cW_{0,3}$ (namely, $\rper(\sigma)=0$) implies that, the phases of $Z_1(\lambda_1(\cO_x))$ and $Z_2(\rho_2(\cO_x))$ are equal. From Lemma \ref{phaseoflambda1Oxandrho2Ox}, this means $\phi(\zeta_0+\zeta_1) = \phi(\zeta_0' + \zeta_1')$. Therefore, there exists a positive real cofficient $c$ such that $\zeta_0'+\zeta_1' = c(\zeta_0+\zeta_1)$. Substituting this into the formulas from  Proposition \ref{cofficientsofpisigma3} yields the following.
        \begin{align*} 
            &\xi_0 = -(c+1)(\zeta_0+\zeta_1), \\
            &\xi_1 = c(\zeta_0+\zeta_1)
        \end{align*}
        Note that $z = t \cdot \rdet(\xi_{0,1})$ by \eqref{eq.z}. Thus the conditions $\cW_{0,3} \subset \partial_z$ is equivalent to the contision $\rdet(\xi_{0,1})=0$ holds for all $\sigma \in \cW_{0,3} $. From the expression above, $\xi_0$ and $\xi_1$ are proportional. This proportionality means the columns of the matrix for $\rdet(\xi_{0,1})$ are linearly dependent. Therefore $\rdet(\xi_{0,1})=0$.
        This obviously holds true regardless of the parameters (as long as $c>0$), and thus $\mathcal{W}_{0,3}\subset\partial_{z}$. From this, it follows that 
        $$
            \overline{\partial S_{div}}\cap\mathcal{W}_{0,3}=(\partial_{z}\cup\partial_{w})\cap\mathcal{W}_{0,3}=\mathcal{W}_{0,3}.
        $$
        The vertex $\partial_{z}\cap\partial_{w}$ corresponds to the locus within $\mathcal{W}_{0,3}$ satisfying both $z=0$ and $w=ze=0$. From \eqref{eq.w}, the condition $w=0$ is equivalent to $\rdet(\xi_{0,2})=0$. Substituting the above coefficients and rearranging, we have:
        \begin{align*}
            \rdet(\xi_{0,2}) &= -(c+1) \rdet(\zeta_0+\zeta_1,\xi_2) \\
            &= -(c+1) (\rdet(\zeta_0,\zeta_1) - \rdet(\zeta_0+\zeta_1,\zeta'_0)) \\
            &= 0
        \end{align*}
        Since $c > 0$ (as established earlier), we know $c+1 \ne 0$. This implies,  $
        \rdet(\zeta_0,\zeta_1) - \rdet(\zeta_0+\zeta_1,\zeta'_0) = 0$, which is the desired formula.  
        \end{proof}
\end{thm}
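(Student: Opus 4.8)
The plan is to mimic the template of the cases $m=1,2$ (Theorems~\ref{thm.boundaryofW01} and \ref{thm.boundaryofW02}): translate the condition $\rper(\sigma)=0$ into a linear relation among the quiver parameters, substitute it into the explicit formulas for $\pi(\sigma)=(\xi_0,\xi_1,\xi_2,\xi_3)$ supplied by Proposition~\ref{cofficientsofpisigma3}, and then read off the determinants $\rdet(\xi_{0,1})$ and $\rdet(\xi_{0,2})$ which control $z$ and $w$ through \eqref{eq.z} and \eqref{eq.w}.

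\textbf{Step 1: the constraint from $\rper(\sigma)=0$.} For $\sigma=\sigma_{gl,3}\in\cW_{0,3}$ we have $\phi(\lambda_1(\cO_x))=\phi(\rho_2(\cO_x))$ by definition, and Lemma~\ref{phaseoflambda1Oxandrho2Ox} identifies these two phases with $\phi(\zeta_0+\zeta_1)$ and $\phi(\zeta_0'+\zeta_1')$ respectively (here $Z_{gl,3}|_{\cA_1}=Z_1$, $Z_{gl,3}|_{\cA_2}=Z_2$, with $\zeta_0=Z_1(\cO(k-1)[1])$, $\zeta_1=Z_1(\cO(k))$, and similarly for $\zeta_0',\zeta_1'$). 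Hence $\zeta_0'+\zeta_1'$ and $\zeta_0+\zeta_1$ have equal argument; since both lie in $\bH$, there is a genuinely positive constant $c\in\bR_{>0}$ with $\zeta_0'+\zeta_1'=c(\zeta_0+\zeta_1)$.

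\textbf{Step 2: $\cW_{0,3}\subset\partial_z$.} Feeding this relation into Proposition~\ref{cofficientsofpisigma3} gives $\xi_0=-(\zeta_0+\zeta_1)-(\zeta_0'+\zeta_1')=-(c+1)(\zeta_0+\zeta_1)$ and $\xi_1=\zeta_0'+\zeta_1'=c(\zeta_0+\zeta_1)$. Thus $\xi_0$ and $\xi_1$ are real scalar multiples of the single complex number $\zeta_0+\zeta_1$, so the two columns of the matrix defining $\rdet(\xi_{0,1})$ are linearly dependent and $\rdet(\xi_{0,1})=0$. By \eqref{eq.z} this forces $z=0$ for every $\sigma\in\cW_{0,3}$, i.e. $\cW_{0,3}\subset\partial_z$, whence $\partial\overline{S_{div}}\cap\cW_{0,3}=(\partial_z\cup\partial_w)\cap\cW_{0,3}=\cW_{0,3}$.

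\textbf{Step 3: the vertex condition.} Since $\cW_{0,3}$ already lies in $\{z=0\}$, a point of it lies on the vertex $\partial_z\cap\partial_w$ exactly when $w=0$, equivalently $\rdet(\xi_{0,2})=0$ by \eqref{eq.w}. Using $\xi_0=-(c+1)(\zeta_0+\zeta_1)$ and the bilinearity and alternating property of the $2\times2$ real determinant $\rdet(\,\cdot\,,\,\cdot\,)$ formed from real and imaginary parts, $\rdet(\xi_{0,2})=-(c+1)\,\rdet(\zeta_0+\zeta_1,\xi_2)$. I would then expand $\xi_2$ from Proposition~\ref{cofficientsofpisigma3} in the "basis" $\zeta_0,\zeta_1,\zeta_0',\zeta_1'$, apply $\rdet(\zeta_0+\zeta_1,\zeta_0)=-\rdet(\zeta_0,\zeta_1)$ and $\rdet(\zeta_0+\zeta_1,\zeta_1)=\rdet(\zeta_0,\zeta_1)$, and eliminate $\zeta_1'$ via $\zeta_1'=c(\zeta_0+\zeta_1)-\zeta_0'$ (so $\rdet(\zeta_0+\zeta_1,\zeta_1')=-\rdet(\zeta_0+\zeta_1,\zeta_0')$). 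The $e$- and $k$-dependent coefficients then telescope to coefficient $1$ on $\rdet(\zeta_0,\zeta_1)$ and $-1$ on $\rdet(\zeta_0+\zeta_1,\zeta_0')$, giving $\rdet(\zeta_0+\zeta_1,\xi_2)=\rdet(\zeta_0,\zeta_1)-\rdet(\zeta_0+\zeta_1,\zeta_0')$. As $c+1>0$, the locus $w=0$ is precisely $\rdet(\zeta_0,\zeta_1)-\rdet(\zeta_0+\zeta_1,\zeta_0')=0$, the asserted determinantal condition.

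The main obstacle is the bookkeeping in Step~3: arranging the alternating/bilinear manipulations and the substitution $\zeta_1'=c(\zeta_0+\zeta_1)-\zeta_0'$ so that all dependence on $e,k,k'$ cancels and the clean parameter-independent expression survives. I would also want to verify carefully that $c$ is strictly positive (so $c+1\neq0$ and the "iff" is not vacuous) and that the argument in Step~2 is unaffected in the degenerate-looking case where $\zeta_0+\zeta_1$ is real, since $\rdet(\zeta_0,\zeta_1)$ can still be nonzero there.
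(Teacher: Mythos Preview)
Your proposal is correct and follows essentially the same approach as the paper: both arguments translate $\rper(\sigma)=0$ into the proportionality $\zeta_0'+\zeta_1'=c(\zeta_0+\zeta_1)$ with $c>0$, deduce that $\xi_0,\xi_1$ are real multiples of $\zeta_0+\zeta_1$ to obtain $\rdet(\xi_{0,1})=0$ and hence $\cW_{0,3}\subset\partial_z$, and then expand $\rdet(\xi_{0,2})=-(c+1)\rdet(\zeta_0+\zeta_1,\xi_2)$ to reach the stated determinantal condition. Your Step~3 in fact spells out the bilinear/alternating bookkeeping and the substitution $\zeta_1'=c(\zeta_0+\zeta_1)-\zeta_0'$ more explicitly than the paper does, and the cancellation of the $e,k,k'$-coefficients works exactly as you describe.
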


\begin{example}    
    A stability condition $\sigma_{gl,3} \in \mathcal{W}_{0,3}$ that intersects the vertex $\partial_{z}\cap\partial_{w}$ does indeed exist. For example, let the stability conditions on the components be $\sigma_1 = \sigma_{k,(-r+ri),(r+ri)}$ and $\sigma_2 = \sigma_{k',(r+ri),(-r+ri)}$ for some $r \in \mathbb{R}_{>0}$. If $\sigma_{gl,3} \in \mathcal{G}l(\sigma_1,\sigma_2)$ is the resulting glued stability condition, then it satisfies both the wall condition ($per(\sigma)=0$) and the vertex condition derived in Theorem \ref{thm.m=3}.
\end{example}

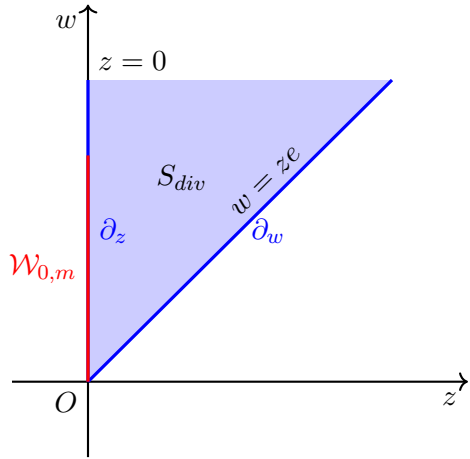
\begin{figure}[htbp]
  \centering 
  \begin{tikzpicture}
    \draw[->, thick] (-1, 0) -- (5, 0) node[below left] {$z$};
    \draw[->, thick] (0, -1) -- (0, 5) node[below left] {$w$};
    \node[below left] at (0,0) {$O$};

    \fill[blue!20] (0,0) -- (0,4) -- (4,4) -- cycle;
    \node[below left] at (1.7,3) {$S_{div}$};
    \draw[blue, very thick] (0,4) -- (0,0) node[midway,right] {$\partial_z$};
    \draw[blue, very thick] (0,0) -- (4,4) node[midway,right] {$\partial_w$};
    \draw[red, very thick] (0,3) -- (0,0) node[midway,left] {$\cW_{0,m}$};

    \node[above, rotate=45] at (2.5, 2.5) {$w=ze$};
    \node[above right] at (0, 4) {$z=0$};
  \end{tikzpicture}

  \caption{The region $\overline{S_{div}}$ for glued type $m=3$ at a fixed point $(x,y) \in \bR^2$. The wall $\cW_{0,3}$ intersects the boundary, and touches the vertex of $\overline{S_{div}}$.}
  \label{fig:zw_region3} 
\end{figure}

\section{Moduli spaces of $\sigma_{gl}$-stable objects}
Let $\sigma=\sigma_{gl,m}$ be a gluing stability condition of type $m=1,2,3,4$. We denote by $M^{\sigma ss}([\cO_x])$ the set of isomorphism classes of $\sigma$-semistable objects $F$ such that $\phi(F) = \phi(\cO_x)$ and $\rch(F) = \rch(\cO_x)$.  

Recall that if $\cA$ is a finite length abelian category, any object $E \in \cA$ admits a Jordan-H$\ddot{o}$lder filtration 
$$
    0 = E_0 \subset E_1 \subset ... \subset E_n = E
$$
such that every quotient $F_i := E_i / E_{i-1}$ is simple in $\cA$. In particular, the object
\begin{equation}
    {\rm gr}(E) = \bigoplus_{i=1}^n F_i
\end{equation}
is uniquely determined. Objects $E,E' \in \cA$ are called {\it S-equivalent} if ${\rm gr}(E) = {\rm gr}(E')$ ( \cite{HL} \S 1.5). We call $\rgr(E)$ the {\it semisimplification} of $E$. As shown in Proposition \ref{supportpropertyofgluingstabilitycondition}, any $\sigma_{gl,m}=(Z_{gl,m},\cA_{gl,m}) \in \cG l(\sigma_1,\sigma_2)$ is a locally finite, so the heart $\cA_{gl,m}$ has finite length.

\begin{lemma} \label{lemmasetofMsigma}
    Let $\sigma=\sigma_{gl,m} \in \cG l(\sigma_1,\sigma_2)$. We use the notation $\cO_{\Sigma_e}(n,m)$ for the line bundle $\cO_{\Sigma_e}(nC_0+mf)$. We also denote the following sets:
    \begin{align*}
        S_{p} := \{ \cO_x &\mid x \in \Sigma_e \}, \quad S_{f} := \{\cO_f \oplus \cO_f(-C_0)[1] \mid f \subset \Sigma_e \}, \\
        &S_{l} := \left \{\bigoplus_{j,l=0}^1 \cO_{\Sigma_e}(-l,k-j)[j+l] \right \}.
    \end{align*}
    Then, the set $M^{\sigma ss}([\cO_x])$ of $\sigma$-semistable objects with Chern character $\rch(\cO_x)$ is as follows:
    \begin{enumerate}
        \item If $m=1$, then
        $$
            M^{\sigma ss}([\cO_x]) = \begin{cases} 
                 S_p \cup S_f \qquad & {\rm if \ per(\sigma)=0 \ and \ per_i(\sigma) \neq 0} \\
                 S_p \cup S_f \cup S_l & {\rm if \ \rper(\sigma)=0 \ and \ \rper_i(\sigma) = 0} \\
                \emptyset  & {\rm if \ per}(\sigma) \neq 0
            \end{cases}
        $$
        \item If $m=2,3$, then
        $$
            M^{\sigma ss}([\cO_x]) = \begin{cases}
                S_p & {\rm if \ per}(\sigma) > 0 \\
                S_p \cup S_f \qquad & {\rm if \ per(\sigma)=0 \ and \ per_i(\sigma) \neq 0} \\
                S_p \cup S_f \cup S_l  & {\rm if \ per(\sigma)=0 \ and \ per_i(\sigma) = 0} \\
                \emptyset & {\rm if \ per}(\sigma) < 0
            \end{cases}
        $$
        \item If $m=4$, then $M^{\sigma ss}([\cO_x]) = S_p \cup S_f$
    \end{enumerate}
    \begin{proof}
        We first show the case of $m=1$. In this case, $\sigma_1$ is a quiver stability condition and $\sigma_2$ is a standard stability condition. Thus by Lemma \ref{phaseoflambda1Oxandrho2Ox}, we have $\phi(\cO_f)= 1$ and $ \phi(\cO_f(-C_0)[1]) \in (0,1]$. By definition \eqref{gluingperbersity}, we have $\rper(\sigma) = \phi(\cO_f(-C_0)[1]) - \phi(\cO_f) \leq 0$. If $\rper(\sigma) < 0$, then all skyscraper sheaves are unstable by Proposition \ref{PropHN-filtofAgl} (1). Namely, $M^{\sigma ss}([\cO_x]) = \emptyset$. Assume that $\rper(\sigma) = 0$. Then there are two subcases: $\rper_i(\sigma) = 0$ or $\rper_i(\sigma) \neq 0$. Note that in both subcases, $\cO_x$ is $\sigma$-semistable. If $\rper_i(\sigma) = 0$, then the semisimplification $\rgr(\cO_x)$ is given by
        \begin{equation} \label{semisimplificationofOx}            
            \rgr_0(\cO_x) = \cO_{\Sigma_e}(kf) \oplus \cO_{\Sigma_e}((k-1)f)[1] \oplus \cO_{\Sigma_e}(-C_0+kf)[1] \oplus \cO_{\Sigma_e}(-C_0+(k-1)f)[2]
        \end{equation}
        by Proposition \ref{PropHN-filtofAgl} (3). By a simple calculation, we have 
        \begin{align}
            &\rch_0(\rgr_0(\cO_x)) = 1-1-1+1 = 0, \\
            &\rch_1(\rgr_0(\cO_x)) = kf-(k-1)f-(-C_0+kf)+(-C_0+(k-1)f) = 0, \\
            &\rch_2(\rgr_0(\cO_x)) = \frac{2k+e}{2} + \frac{-2k+2-e}{2} = 1
        \end{align}
        and so we have $\rch(\rgr_0(\cO_x)) = \rch(\cO_x)$. If $\rper_i(\sigma) \neq 0$, the simplification is $\rgr(\cO_x) = \cO_f \oplus \cO_f(-C_0)[1]$. As shown in the calculation for Proposition \ref{PropHN-filtofAgl}, $\rch(\cO_f \oplus \cO_f(-C_0)[1]) = \rch(\cO_x)$. Since all objects listed $\cO_x, \cO_f \oplus \cO_f(-C_0)[1]$ and $\rgr_0(\cO_x)$ have the correct Chern character and $\sigma$-semistable when $\rper(\sigma)=0$, they belong to $M^{\sigma ss}([\cO_x])$. This justifies the statement for $m=1$. Next, we show the case of $m=2$. In this case, $\sigma_1$ is a standard and $\sigma_2$ is a quiver. We have $\phi(\cO_f(-C_0)[1])=1$ and $\phi(\cO_f) \in (0,1]$ by Lemma \ref{phaseoflambda1Oxandrho2Ox}. Thus, we have $\rper(\sigma)\geq 0$. Assume that $\rper(\sigma)>0$. Then the skyscraper sheaves $\cO_x$ are the only stable objects in $M^{\sigma ss}([\cO_x])$ by Proposition \ref{PropHN-filtofAgl} (2). The case of $\rper(\sigma)=0$ is the same as in the case of $m=1$. By combining the argument for $m=1$ and $m=2$, we obtain the argument for the case $m=3$, which corresponds to the situation where both $\phi(\cO_f),\phi(\cO_f(-C_0)[1]) \in (0,1]$. Finally, we show the case of $m=4$. In this case, $\sigma_1$ and $\sigma_2$ are both standard, so by Lemma \ref{phaseoflambda1Oxandrho2Ox}, $\phi(\cO_f(-C_0)[1]) = \phi(\cO_f)=1$, which implies this case only exists when $\rper(\sigma)=0$. Since $\rper_i(\sigma) = 1 \ne 0$, we are in the situation of Proposition \ref{PropHN-filtofAgl} (3)(a). This proves the assertion for $m=4$.  
        \end{proof}
\end{lemma}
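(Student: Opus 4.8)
The plan is to reduce the statement to the behaviour of the skyscraper $\cO_x$ itself, which is governed by Proposition \ref{PropHN-filtofAgl}, together with a classification of \emph{all} $\sigma$-semistable objects of class $\rch(\cO_x)=(0,0,1)$. As a first step I would record, type by type, the phases of the canonical building blocks $\cO_f=\rho_2(\cO_x)$ and $\cO_f(-C_0)[1]=\lambda_1(\cO_x)$ using Lemma \ref{phaseoflambda1Oxandrho2Ox}: a standard component contributes phase $1$ and a quiver component contributes $\phi(\zeta_0+\zeta_1)\in(0,1]$, so for $m=1$ one has $\rper(\sigma)\le0$, for $m=2$ one has $\rper(\sigma)\ge0$, for $m=3$ the sign is unconstrained, and for $m=4$ both phases equal $1$, forcing $\rper(\sigma)=0$ and $\rper_i(\sigma)=1\neq0$. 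Proposition \ref{PropHN-filtofAgl} then gives the trichotomy: $\cO_x$ is $\sigma$-stable when $\rper(\sigma)>0$, $\sigma$-unstable when $\rper(\sigma)<0$, and strictly $\sigma$-semistable when $\rper(\sigma)=0$, with semisimplification $\cO_f\oplus\cO_f(-C_0)[1]$ if $\rper_i(\sigma)\neq0$ and the four-term object of $S_l$ if $\rper_i(\sigma)=0$. (Lemma \ref{lem.phaseofzeta0zeta1} is used here to see that in the $m\in\{1,2\}$, $\rper(\sigma)=0$ situation the quiver parameters lie on $\bR_{<0}$, so the phases in play really do coincide.)

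A short Chern-character computation, using additivity and $C_0^2=-e$, $C_0.f=1$, $f^2=0$, confirms that $\rch(\cO_f\oplus\cO_f(-C_0)[1])$ and the class of the object of $S_l$ both equal $\rch(\cO_x)$. Since each of these objects, and $\cO_x$ itself, is visibly $\sigma$-semistable of phase $\phi(\cO_x)$ in the stated range, this yields the inclusion $\supseteq$ in every line of the lemma at once, and in particular identifies the case $m=4$ as the sub-case $\rper(\sigma)=0$, $\rper_i(\sigma)\neq0$.

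The substance --- and the step I expect to be the main obstacle --- is the reverse inclusion: that nothing else is $\sigma$-semistable of class $\rch(\cO_x)$, and hence $M^{\sigma ss}([\cO_x])=\emptyset$ once $\cO_x$ is unstable. Here I would take an arbitrary $\sigma$-semistable $F\in\cA_{gl,m}$ with $\rch(F)=\rch(\cO_x)$ and analyze the canonical sequence $0\to\rho_2(F)\to F\to\lambda_1(F)\to0$. By Proposition \ref{Prop_rankdegofadjointfunctor}, the numerics of $\rch(\cO_x)$ force $\rho_2(F)$ and $\lambda_1(F)$ to be pullbacks --- twisted by $\cO_{\Sigma_e}(-C_0)[1]$ on the $\lambda_1$-side --- of base objects of rank $0$ and degree $\pm1$ on $\bP^1$. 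On a standard component such a base object must be a skyscraper $\cO_y$, so $\rho_2(F)\cong\cO_f$ resp. $\lambda_1(F)\cong\cO_{f'}(-C_0)[1]$; on a quiver component the coordinate change \eqref{mat11} (or \eqref{mat21}) shows the base object has dimension vector $[1,1]$, and Lemma \ref{JH-filtofOf} identifies the objects of that dimension vector as either $\cO_y$ or the split sum of the two exceptional generators --- the latter being compatible with $\sigma$-semistability of phase $\phi(\cO_x)$ exactly when $\rper_i(\sigma)=0$, which is where the extra factors in $S_l$ come from.

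It then remains to decide which extensions of $\lambda_1(F)$ by $\rho_2(F)$ are $\sigma$-semistable with the correct class. The phase inequalities $\phi(\rho_2(F))\le\phi(F)\le\phi(\lambda_1(F))$ coming from the sequence, together with the sign of $\rper(\sigma)$ and Proposition \ref{PropHN-filtofAgl}, at once eliminate every $F$ when $\rper(\sigma)<0$. When $\rper(\sigma)>0$ a direct summand whose phase differs from $\phi(F)$ is impossible in a semistable $F$, so the extension is non-split; computing $\rExt^1(\lambda_1(F),\rho_2(F))$ via the projection formula and $\bfR p_*\cO_{\Sigma_e}(C_0)\cong\cO\oplus\cO(-e)$ shows it is nonzero only when $f'=f$, in which case $F$ is one of the $\cO_x$, giving $M^{\sigma ss}([\cO_x])=S_p$. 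When $\rper(\sigma)=0$ all relevant phases agree, every such extension is semistable, and distinguishing the non-split case (the $\cO_x$) from the split case (the objects of $S_f$, with the factors split once more when $\rper_i(\sigma)=0$, giving $S_l$) reproduces the stated sets. The delicate part throughout is this combinatorics --- the finitely many dimension vectors realizing $\rch(\cO_x)$ in the glued heart and the small $\rExt^1$-groups between the resulting pieces --- and it is what the whole argument hinges on; once it is under control the four cases go through uniformly.
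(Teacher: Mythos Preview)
Your first half --- recording the phases via Lemma~\ref{phaseoflambda1Oxandrho2Ox}, invoking the trichotomy of Proposition~\ref{PropHN-filtofAgl}, and checking the Chern characters of the candidate objects --- is exactly what the paper does, and this establishes the inclusion $\supseteq$ in every case.

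Where you diverge is in the reverse inclusion. The paper's own proof does \emph{not} argue this direction: it passes from ``$\cO_x$ is unstable'' directly to ``$M^{\sigma ss}([\cO_x])=\emptyset$'' and from ``$\cO_x$ is stable'' directly to ``$M^{\sigma ss}([\cO_x])=S_p$'', appealing only to Proposition~\ref{PropHN-filtofAgl}, which is a statement about $\cO_x$ alone and says nothing about other objects of the same numerical class. Your sketch --- taking an arbitrary semistable $F$ of class $(0,0,1)$, reading off from Proposition~\ref{Prop_rankdegofadjointfunctor} that $\rho_2(F)$ and $\lambda_1(F)$ are pullbacks of rank-$0$, degree-$\pm 1$ base objects, classifying these (skyscrapers on a standard component, dimension vector $[1,1]$ on a quiver component), and then controlling the extension via the projection formula and $\bfR p_*\cO_{\Sigma_e}(C_0)\cong\cO\oplus\cO(-e)$ --- is the correct way to close this gap, and the phase inequality $\phi(\rho_2(F))\le\phi(F)\le\phi(\lambda_1(F))$ from semistability indeed forces $\rper(\sigma)\ge 0$, giving emptiness when $\rper(\sigma)<0$. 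So your proposal is not merely a different route: it supplies an argument the paper omits. One small point to tidy in your write-up is that the classification of dimension-vector $[1,1]$ objects in $\cA(k)$ is not literally the content of Lemma~\ref{JH-filtofOf}, but follows from the Kronecker-quiver description (extensions of $\cO(k-1)[1]$ by $\cO(k)$ are either split or a skyscraper $\cO_y$).
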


\begin{lemma} \label{lemma.Sequivifandonlyifsamefiber}
     Let $\sigma=\sigma_{gl,m} \in \cG l(\sigma_1,\sigma_2)$ be a gluing stability condition satisfying ${\rm per}(\sigma) = 0$, and $p : \Sigma_e \ \to \bP^1$ be a projection. 
     \begin{enumerate}
         \item If $\rper_i(\sigma) \neq 0$ for $i=1,2$, then for any $x_1,x_2 \in \Sigma_e$, $p(x_1) = p(x_2)$ if and only if $\cO_{x_1}$ and $\cO_{x_2}$ are S-equivalent.
         \item If $\rper_i(\sigma) =0$ for $i=1,2$, then all skyscraper sheaves are S-equivalent.
     \end{enumerate}
     \begin{proof}
        We first prove (1). Assume $\rper_i(\sigma) \neq 0$ for $i=1,2$. By Proposition \ref{PropHN-filtofAgl} (3)(a), for any $x \in \Sigma_e$, the semisimplification $\rgr(\cO_x)$ is equal to $\cO_f \oplus \cO_f(-C_0)[1]$, where $f=p(x)$ is the fiber containing $x$. Thus, for any two points $x_1,x_2 \in \Sigma_e$,  $\rgr(\cO_{x_1}) = \rgr(\cO_{x_2})$ if and only if $x_1$ and $x_2$ belong to the same fiber. This is equivalent to $p(x_1)=p(x_2)$. We prove (2). Assume $\rper_i(\sigma) = 0$ for $i=1,2$. Then, by Proposition \ref{PropHN-filtofAgl} (3)(b) and \eqref{semisimplificationofOx}, the semisimplification $\rgr(\cO_x)$ is a specific object which does not depend on the choice of point $x$. Therefore, all skyscraper sheaves are S-equivalent.
    \end{proof}
\end{lemma}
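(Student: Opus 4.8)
The plan is to deduce everything from the explicit Jordan--Hölder data for $\cO_x$ established in Proposition \ref{PropHN-filtofAgl}(3), combined with the Krull--Schmidt property of $D^b(\hir)$. Since S-equivalence is tested by comparing semisimplifications, the whole lemma reduces to computing $\rgr(\cO_x)$ in the two regimes $\rper_i(\sigma)\neq 0$ and $\rper_i(\sigma)=0$, and then recognizing precisely when two such objects are isomorphic. Note that by hypothesis $\rper(\sigma)=0$, so $\cO_x$ is strictly semistable and Proposition \ref{PropHN-filtofAgl}(3) applies for every $x\in\hir$.

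For part (1), I would assume $\rper_i(\sigma)\neq 0$ for $i=1,2$, so that Proposition \ref{PropHN-filtofAgl}(3)(a) supplies the two-step filtration $0\to\cO_f\to\cO_x\to\cO_f(-C_0)[1]\to 0$ with simple factors, where $f=p(x)$; hence $\rgr(\cO_x)\cong\cO_f\oplus\cO_f(-C_0)[1]$. Thus for $x_1,x_2$ with fibers $f_1=p(x_1)$ and $f_2=p(x_2)$, S-equivalence of $\cO_{x_1}$ and $\cO_{x_2}$ amounts to an isomorphism $\cO_{f_1}\oplus\cO_{f_1}(-C_0)[1]\cong\cO_{f_2}\oplus\cO_{f_2}(-C_0)[1]$ in $D^b(\hir)$. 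The four summands are indecomposable, and $\cO_{f_i}$ sits in cohomological degree $0$ while $\cO_{f_i}(-C_0)[1]$ sits in degree $-1$, so by Krull--Schmidt any such isomorphism forces $\cO_{f_1}\cong\cO_{f_2}$ as coherent sheaves, which happens iff the two fibers coincide, i.e. $p(x_1)=p(x_2)$. The converse is immediate: if $p(x_1)=p(x_2)$ the two filtrations above are literally equal, so $\cO_{x_1}$ and $\cO_{x_2}$ are S-equivalent.

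For part (2), I would assume $\rper_i(\sigma)=0$ for $i=1,2$ and invoke Proposition \ref{PropHN-filtofAgl}(3)(b), which refines the filtration of part (1) by further splitting $\cO_f$ and $\cO_f(-C_0)[1]$ into their simple line-bundle factors. The resulting semisimplification is the object displayed in \eqref{semisimplificationofOx}, namely $\cO_{\hir}(kf)\oplus\cO_{\hir}((k-1)f)[1]\oplus\cO_{\hir}(-C_0+kf)[1]\oplus\cO_{\hir}(-C_0+(k-1)f)[2]$, which depends only on the fixed integers $k$ and $e$ and not at all on the point $x$. Hence $\rgr(\cO_{x_1})=\rgr(\cO_{x_2})$ for every pair $x_1,x_2\in\hir$, so all skyscraper sheaves are mutually S-equivalent.

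The only genuinely substantive input is Proposition \ref{PropHN-filtofAgl}, which is already available; given that, the remaining work is essentially bookkeeping. The one point that needs a moment of care is the Krull--Schmidt step in part (1): one must verify that the two indecomposable summands of $\rgr(\cO_x)$ cannot be interchanged across distinct fibers, which follows from comparing their supports together with their cohomological degrees. I do not anticipate any serious obstacle beyond that.
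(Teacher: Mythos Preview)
Your proposal is correct and follows essentially the same approach as the paper: both arguments invoke Proposition \ref{PropHN-filtofAgl}(3) to read off $\rgr(\cO_x)$ in each regime and then compare these semisimplifications. Your version is slightly more detailed in justifying the ``only if'' direction of part (1) via Krull--Schmidt and cohomological-degree considerations, whereas the paper simply asserts that $\rgr(\cO_{x_1})=\rgr(\cO_{x_2})$ iff $x_1,x_2$ lie on the same fiber.
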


To prove theorem \ref{Mainthm1}, we review the construction of algebraic spaces in \cite{Inaba02towardadefinitionofmoduliofcomplexesofcoherentsheavesonaprojectivescheme}. In \cite{Inaba02towardadefinitionofmoduliofcomplexesofcoherentsheavesonaprojectivescheme}, the functor from the category of locally noetherian schemes over fixed scheme $X$ over a scheme $S$ to the category of sets is defined as 
     $$
        Splcpx_{X/S}(T) = \left \{ E^\bullet \in D^b(X \times T) \  \middle | \   
            \begin{aligned}
                 & E^i \ is \ flat \ over \ T,\\ 
                 & \rExt_{X_t}^0(E_t,E_t) = \kappa(t) \ and \ \\ 
                 & \rExt_{X_t}^{-1}(E_t,E_t) = 0 \ for \ t \in T  \\
            \end{aligned}  
           \right  \},
     $$
and $Splcpx_{X/S}^{\acute{e}t}$ denotes the associated sheaf in the $\acute{e}$tale topology of $Splcpx_{X/S}$.

\begin{thm}[\cite{Inaba02towardadefinitionofmoduliofcomplexesofcoherentsheavesonaprojectivescheme} Theorem 0.2] \label{thm.Inaba.theorem0.2}
$Splcpx_{X/S}^{\acute{e}t}$ is represented by a locally separated algebraic space over $S$.    
\end{thm}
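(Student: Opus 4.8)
The plan is to prove this by the stack-theoretic form of Artin's representability criterion: one realizes $Splcpx^{\acute{e}t}_{X/S}$ as the rigidification, by the $\bG_m$ of scalar automorphisms, of a moduli stack of complexes. Concretely, let $\mathfrak{M}$ be the $S$-stack whose $T$-points (for $T$ locally noetherian over $S$) are $T$-flat families of bounded complexes $E^\bullet$ of coherent sheaves on $X\times_S T$, equivalently relatively perfect complexes with $T$-flat terms. That $\mathfrak{M}$ is an algebraic stack, locally of finite presentation over $S$ with quasi-separated diagonal, is the projective-case instance of the algebraicity of moduli of complexes on a proper morphism; its proof is the verification of Artin's axioms: local finite presentation by noetherian approximation; a deformation--obstruction theory for a family $E^\bullet$ furnished by the coherent relative sheaves $\rExt^1_{X\times_S T/T}(E^\bullet,E^\bullet)$ and $\rExt^2_{X\times_S T/T}(E^\bullet,E^\bullet)$, which are coherent and base-change compatible since $X\to S$ is proper and $E^\bullet$ is $T$-perfect; and effectivity of formal objects by the Grothendieck existence theorem for pseudo-coherent complexes over a formal projective scheme, where the projectivity of $X$ over $S$ enters essentially.

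Next, Inaba's fibrewise conditions $\rExt^{-1}_{X_t}(E_t,E_t)=0$ and $\rExt^{0}_{X_t}(E_t,E_t)=\kappa(t)$ are open on $\mathfrak{M}$ --- by upper semicontinuity of $\dim_{\kappa(t)}\rExt^{i}_{X_t}(E_t,E_t)$ along a bounded family, only finitely many $i$ contributing --- so they cut out an open substack $\mathfrak{M}^s\subset\mathfrak{M}$. The condition $\rExt^0=\kappa$ makes the automorphism group scheme of every object of $\mathfrak{M}^s$ equal to $\bG_m$ (scalars), and together with $\rExt^{-1}=0$ it makes $\mathfrak{M}^s$ a $\bG_m$-gerbe over an algebraic space: the rigidification $\mathfrak{M}^s{/\!\!/}\bG_m$ (Abramovich--Corti--Vistoli, Romagny) is an algebraic stack with trivial inertia, hence an algebraic space, and $\mathfrak{M}^s\to\mathfrak{M}^s{/\!\!/}\bG_m$ is a $\bG_m$-gerbe. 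It then remains to identify $Splcpx^{\acute{e}t}_{X/S}$ with $\mathfrak{M}^s{/\!\!/}\bG_m$: the presheaf $Splcpx_{X/S}$ is by construction the presheaf of sets of objects of $\mathfrak{M}^s$ (its defining flatness and $\rExt$ conditions are exactly those of $\mathfrak{M}^s$), so its \'etale sheafification $Splcpx^{\acute{e}t}_{X/S}$ is the associated \'etale sheaf, which after inverting the $\bG_m$-automorphisms is precisely the sheaf represented by the algebraic space $\mathfrak{M}^s{/\!\!/}\bG_m$.

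Finally one checks \emph{local} separatedness, namely that the diagonal of $Splcpx^{\acute{e}t}_{X/S}$ is an immersion. After bounding the families, the functor $T'\mapsto\rhom_{X\times_S T'}(E_{T'},F_{T'})$ of homomorphisms between two families is representable by an affine $T$-scheme; the locus where such a homomorphism is a quasi-isomorphism is open in it (the cohomology sheaves of the cone vanish, an open condition) and coincides there with the isomorphism locus; dividing by the two $\bG_m$-actions exhibits the diagonal as a locally closed immersion, which is not closed in general because quasi-isomorphism type can jump in families --- this is exactly why one obtains a \emph{locally} separated, not separated, algebraic space. The main obstacle is the algebraicity of $\mathfrak{M}$, and within that the two genuinely delicate inputs are (i) effectivity of formal deformations of a bounded complex, which needs a Grothendieck-existence statement for pseudo-coherent (equivalently bounded $T$-perfect) complexes over a formal projective scheme and hence uses projectivity, and (ii) the coherence and base-change compatibility of the obstruction groups $\rExt^1,\rExt^2$, which forces one to work with $T$-perfect complexes throughout and to invoke cohomology and base change for $\rExt$-sheaves. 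Granting algebraicity of $\mathfrak{M}$, the rigidification step and the identification with the sheafification are formal.
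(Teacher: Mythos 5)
This theorem is not proved in the paper at all: it is quoted verbatim from Inaba's article, whose own argument applies Artin's representability criteria directly to the set-valued sheaf $Splcpx^{\acute{e}t}_{X/S}$ (deformation and obstruction theory via $\rExt^1_{X_t}(E_t,E_t)$ and $\rExt^2_{X_t}(E_t,E_t)$, effectivity of formal deformations through a Grothendieck existence theorem for complexes on a projective scheme, openness of versality, and local separatedness via relative Hom schemes), with no stacks or rigidification. Your route is therefore genuinely different, and as written it has a gap at precisely the step you yourself flag as the main obstacle. The ambient object $\mathfrak{M}$, the ``stack of all $T$-flat families of bounded complexes,'' is not an algebraic stack; it is not even a stack in the \'etale topology, because objects of derived categories with nonvanishing negative self-Ext groups do not satisfy effective descent. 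The algebraicity theorem you invoke (moduli of complexes on a proper morphism, i.e.\ Lieblich's theorem) applies only to the stack of universally gluable relatively perfect complexes, namely those with $\rExt^{i}_{X_t}(E_t,E_t)=0$ for \emph{all} $i<0$, and this hypothesis is what makes descent, and hence algebraicity, work.

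This is not a cosmetic point, because it breaks your final identification. Inaba's functor imposes only $\rExt^{-1}_{X_t}(E_t,E_t)=0$ and $\rExt^{0}_{X_t}(E_t,E_t)=\kappa(t)$, with no condition on $\rExt^{-2}$ and below; the open substack $\mathfrak{M}^s$ you rigidify can only live inside the universally gluable stack, so it parametrizes a priori a strictly smaller class of complexes than $Splcpx_{X/S}$. Consequently the asserted equality of $Splcpx^{\acute{e}t}_{X/S}$ with the rigidification $\mathfrak{M}^s/\!\!/\bG_m$ is unjustified, and your argument proves representability only of a subfunctor of the one in the statement. To close the gap you must either show that the deeper negative self-Exts vanish automatically for the objects under consideration (which is not true for arbitrary simple complexes, so at best requires a real argument), or abandon the stack-theoretic shortcut and verify Artin's criteria for the sheafified functor directly, as Inaba does; in the latter approach the delicate inputs you correctly identify (Grothendieck existence for complexes, coherence and base change for the relative Ext sheaves) reappear, but the gluability issue never arises because one only sheafifies a set-valued functor. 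Your remarks on the openness of the conditions $\rExt^{-1}=0$, $\rExt^{0}=\kappa(t)$ and on local separatedness via Hom schemes and the open quasi-isomorphism locus are fine in spirit, but they all ride on the unproved (and, as stated, false) algebraicity of $\mathfrak{M}$.
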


Let $\cM$ denote the algebraic space obtained in the above theorem. In \cite{toda2012stabilityconditionsextremalcontractions}, the author defined an open algebraic subspace $\cM^{\sigma}([\cO_x]) \subset \cM$ whose closed points are $\sigma$-stable objects in $M^{\sigma ss}([\cO_x])$. 

To prove Theorem \ref{Mainthm1}, we will use the same method as in \cite{toda2012stabilityconditionsextremalcontractions} Theorem 3.16. Although the stability conditions defined in \cite{toda2012stabilityconditionsextremalcontractions} differ from our construction, as is obvious from comparing Lemma \ref{lemmasetofMsigma} with Proposition 3.14 in \cite{toda2012stabilityconditionsextremalcontractions}, the properties of $\mathcal{M}^{\sigma}([\cO_x])$ are the same. Therefore, we can apply the method used in \cite{toda2012stabilityconditionsextremalcontractions} in the same way. Note that by definition, 
$$
    M^{\sigma ss}([\cO_x]) = \{ E \in \cP(\phi(\cO_x) \mid \rch(E) = \rch(\cO_x)\}.
$$

\begin{thm} \label{Mainthm1}
    Let $\sigma = \sigma_{gl,m}$ be a gluing stability condition of glued type $m$ (Definition \ref{typeofgluingstab}). Then, we have the following:

    \vspace{3mm}
    {\bf (Case $m=1$)}
    \begin{enumerate}
        \item If $\rper(\sigma)=0$ and $\rper_i(\sigma) \neq 0$ for $i=1,2$, then $\bP^1$ is the coarse moduli space of S-equivalence classes of objects in $M^{\sigma ss}([\cO_x])$.
        \item If $\rper(\sigma)=0$ and $\rper_i(\sigma)= 0$ for $i=1,2$, the moduli space of S-equivalence classes is isomorphic to $\rSpec \ \bC$.
        \item If ${\rm per}(\sigma) \neq 0$ then the moduli space of $\sigma$-semistable objects in $M^{\sigma ss}([\cO_x])$ is empty.
    \end{enumerate}

    \vspace{3mm}
    {\bf (Case $m=2,3)$}
    \begin{enumerate}
        \item If ${\rm per}(\sigma) > 0$, then $\hir$ is the fine moduli space of $\sigma$-stable objects in $M^{\sigma ss}([\cO_x])$.
        \item If $\rper(\sigma)=0$ and $\rper_i(\sigma) \neq 0$ for $i=1,2$, then $\bP^1$ is the coarse moduli space of S-equivalence classes of objects in $M^{\sigma ss}([\cO_x])$.
        \item If $\rper(\sigma)=0$ and $\rper_i(\sigma)= 0$ for $i=1,2$, the moduli space of S-equivalence classes is isomorphic to $\rSpec \ \bC$.
        \item If $\rper(\sigma) < 0$, then the moduli space of $\sigma$-semistable objects in $M^{\sigma ss}([\cO_x])$ is empty.
    \end{enumerate}

    \vspace{3mm}
    {\bf (Case $m=4$)}    
    $\\$ \hspace{5mm} In this case, $\bP^1$ is the coarse moduli space of S-equivalence classes of objects in $M^{\sigma ss}([\cO_x])$.
    
    \begin{proof}
     We first prove the case $m=2,3$. Assume $\sigma$ is a gluing stability condition of these types. If $\rper(\sigma) < 0$, then by Lemma \ref{lemmasetofMsigma}, $M^{\sigma ss}([\cO_x])$ is empty.

    Assume $\mathrm{per}(\sigma)>0$. By Lemma \ref{lemmasetofMsigma}, every $\sigma$-semistable
    object with $\mathrm{ch}=\mathrm{ch}(O_x)$ is $\sigma$-stable and is isomorphic to $\cO_x$ for a unique point $x\in \Sigma_e$. Hence we have a natural morphism
    \[
    \eta:\Sigma_e \longrightarrow \cM^\sigma([O_x]),\qquad x\mapsto [O_x],
    \]
    which is bijective on $\mathbb{C}$-valued points.
    
    We next show that $\eta$ is $\acute{e}$tale. Since $\cM^\sigma([O_x])$ is an open algebraic subspace
    of Inaba's moduli of simple complexes (Theorem \ref{thm.Inaba.theorem0.2}), its Zariski tangent space at $[\cO_x]$
    is canonically identified with $\rExt^1(\cO_x,\cO_x)$. On the other hand, $\rExt^1(\cO_x,\cO_x)\cong T_x\Sigma_e$. Therefore $d\eta_x$ is an isomorphism for all $x$, and $\eta$ is $\acute{e}$tale.
    
    Finally, $\eta$ is a monomorphism (two points cannot parametrize isomorphic skyscraper sheaves),
    and since $\Sigma_e$ is proper, $\eta$ is proper. A proper monomorphism of algebraic spaces is a closed immersion; combined with $\acute{e}$taleness it is also an open immersion. Hence $\eta$ is an open and closed immersion. Because it is bijective on points, $\eta$ is an isomorphism. This proves (1).

    Next, assume $\rper(\sigma) = 0$. Let $\underline{X}$ denote the functor $\rhom_{(Sch / \bC)}(-,X)$.
    If $\rper_i(\sigma) \neq 0$, Lemma \ref{lemma.Sequivifandonlyifsamefiber} (1) shows that two skyscraper sheaves $\cO_{x_1}$ and $\cO_{x_2}$ are S-equivalent if and only if $p(x_1) = p(x_2)$. 
    To prove (2), consider the functor 
    $$
        \mathcal{M} : ({\rm Sch}/\bC)^{\rm op} \to ({\rm Sets})
    $$
    which sends a $\bC$-scheme $T$ to the set of objects $\cE \in D^b(\hir \times  T)$ whose derived restriction $\cE_t := \cE|_{\hir \times \{t\}} $ belongs to $ \cP_{gl}(\phi(\cO_x))$ and satisfies $\rch(\cE_t) = \rch(\cO_x)$ for all $t \in T$. Let us consider the object
    $$
        \cF_T := \bfR (p \times \rid_T)_* \cE \in D^b(\bP^1 \times T).
    $$
    This object $\cF_T$ is a flat family of skyscraper sheaves of points in $\bP^1$ over $T$. Indeed, for each geometric point $t\in T$, Lemma \ref{lemmasetofMsigma} implies that $\cF_t$ is S-equivalent to $\cO_x$ for some $x \in \Sigma_e$.
    In the case $\rper(\sigma)=0$ and $\rper_i(\sigma)\neq 0$, Lemma \ref{lemma.Sequivifandonlyifsamefiber} (1) shows that the S-equivalence class is determined by $y := p(x)\in \bP^1$. Therefore $Rp_*\cF_t$ is isomorphic to the skyscraper sheaf $\cO_y$.
    Hence $\cF_T$ is a family of length-one $0$-dimensional sheaves on $\bP^1$ over $T$, which defines a morphism $T \to \bP^1$, 
    and therefore we have a natural transformation
    $$
        \Phi_{\bP^1} : \cM \to \underline{\bP^1}.
    $$

    Let $Z$ be another $\bC$-scheme and let $\Phi_Z : \mathcal{M} \to \underline{Z}$
    be any natural transformation. Applying $\Phi_Z$ to the universal family $\{\mathcal{O}_x\}_{x\in \Sigma_e}$,we obtain a morphism $g : \Sigma_e \to Z$. By Lemma \ref{lemma.Sequivifandonlyifsamefiber} (1), the $S$-equivalence class of $\cO_x$ depends only on the point $p(x) \in \bP^1$, hence $g$ is constant on each fiber of $p$. Let $s : \bP^1 \to \Sigma_e$ be a section of $p$ and define $h := g \circ s : \mathbb{P}^1\to Z$. Then for any $x \in \Sigma_e$ we have
    $$
        g(x)=g (s(p(x))) =h(p(x)),
    $$
    and therefore $g = h \circ p$. The morphism $h$ is unique since $p \circ s = \rid_{\bP^1}$.

    Moreover, by construction of $g$ from $\Phi_Z$, the equality $g=h\circ p$ implies that
    $\Phi_Z$ coincides with $\underline{h}\circ \Phi_{\mathbb{P}^1}$, where
    $\underline{h}:\underline{\mathbb{P}}^1\to \underline{Z}$ is the morphism of functors associated with
    $h:\mathbb{P}^1\to Z$, and $\Phi_{\mathbb{P}^1} : \cM\to \underline{\mathbb{P}}^1$ is the natural transformation
    constructed above (whose value on the universal family $\{\mathcal{O}_x\}_{x\in\Sigma_e}$ equals $p:\Sigma_e\to \mathbb{P}^1$).
    Hence every $\Phi_Z$ factors uniquely through $\underline{\mathbb{P}}^1$, and consequently
    $\mathbb{P}^1$ corepresents the functor $\cM$.

    If $\rper_i(\sigma) = 0$, Lemma \ref{lemma.Sequivifandonlyifsamefiber} (2) implies there is only a single S-equivalence class. Thus the moduli space is a point, $\rSpec \ \bC$.

    The assertion for $m=1$ follows from the same arguments, noting that $\rper(\sigma) \leq 0$. For $m=4$, we have $\rper(\sigma)=0$ and $\rper_i(\sigma) \neq 0$ (Lemma \ref{phaseoflambda1Oxandrho2Ox}), which corresponds to the case (2) above.
    \end{proof}
\end{thm}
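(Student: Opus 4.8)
The plan is to stratify the analysis by the sign of the gluing perversity and to feed the explicit descriptions of $M^{\sigma ss}([\cO_x])$ from Lemma \ref{lemmasetofMsigma}, together with the description of S-equivalence classes from Lemma \ref{lemma.Sequivifandonlyifsamefiber}, into a moduli-functor argument modelled on \cite{toda2012stabilityconditionsextremalcontractions}. The empty cases are immediate: for $m=1$ with $\rper(\sigma)\ne 0$, and for $m=2,3$ with $\rper(\sigma)<0$, Lemma \ref{lemmasetofMsigma} gives $M^{\sigma ss}([\cO_x])=\emptyset$, so there is nothing to parametrize. For $m=4$, Lemma \ref{phaseoflambda1Oxandrho2Ox} forces $\rper(\sigma)=0$ with $\rper_i(\sigma)=1\ne 0$, so this case reduces verbatim to the corepresentability argument below.

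For $m=2,3$ with $\rper(\sigma)>0$, Proposition \ref{PropHN-filtofAgl}(2) together with Lemma \ref{lemmasetofMsigma} shows that $M^{\sigma ss}([\cO_x])$ consists exactly of the skyscraper sheaves $\cO_x$, each of which is $\sigma$-stable. I would then work inside Inaba's algebraic space $\cM$ of simple complexes (Theorem \ref{thm.Inaba.theorem0.2}) and its open subspace $\cM^\sigma([\cO_x])$ of $\sigma$-stable objects, and construct the natural morphism $\eta\colon\Sigma_e\to\cM^\sigma([\cO_x])$, $x\mapsto[\cO_x]$. One proves $\eta$ is an isomorphism by checking four points: it is bijective on $\bC$-valued points by the description of $M^{\sigma ss}([\cO_x])$; it is $\acute{e}$tale because the Zariski tangent space of $\cM^\sigma([\cO_x])$ at $[\cO_x]$ is $\Ext^1(\cO_x,\cO_x)\cong T_x\Sigma_e$ and $d\eta_x$ is an isomorphism; it is a monomorphism since distinct points give non-isomorphic skyscrapers; and it is proper since $\Sigma_e$ is. A proper $\acute{e}$tale monomorphism of algebraic spaces is an open and closed immersion, hence an isomorphism onto the (connected) target, which gives the fine moduli statement.

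For $\rper(\sigma)=0$ with $\rper_i(\sigma)\ne 0$ (the generic strictly semistable situation, including all of $m=4$ and case $(2)$ of types $m=1,2,3$), the skyscraper sheaves are strictly semistable and, by Lemma \ref{lemma.Sequivifandonlyifsamefiber}(1), S-equivalent precisely when they lie in the same fiber of $p$. I would set up the moduli functor $\cM$ sending a $\bC$-scheme $T$ to the set of objects $\cE\in D^b(\Sigma_e\times T)$ with $\cE_t\in\cP_{gl}(\phi(\cO_x))$ and $\rch(\cE_t)=\rch(\cO_x)$ for all $t$, then push forward along $\bfR(p\times\rid_T)_*$ to obtain $\cF_T\in D^b(\bP^1\times T)$. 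Using Lemma \ref{lemmasetofMsigma} fiberwise, $\cF_T$ is a flat family of length-one zero-dimensional sheaves on $\bP^1$, hence defines a natural transformation $\Phi_{\bP^1}\colon\cM\to\underline{\bP^1}$. Corepresentability then follows by the standard device: given any $\Phi_Z\colon\cM\to\underline{Z}$, evaluate on the universal family $\{\cO_x\}_{x\in\Sigma_e}$ to get $g\colon\Sigma_e\to Z$, observe that $g$ is constant on fibers of $p$ by Lemma \ref{lemma.Sequivifandonlyifsamefiber}(1), factor $g=h\circ p$ through a section $s$ of $p$, and deduce uniqueness of $h$ from $p\circ s=\rid_{\bP^1}$; thus $\bP^1$ corepresents $\cM$. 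When instead $\rper_i(\sigma)=0$ for $i=1,2$, Lemma \ref{lemma.Sequivifandonlyifsamefiber}(2) gives a single S-equivalence class, so the coarse moduli space is $\rSpec\ \bC$.

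The step I expect to be the main obstacle is controlling the derived pushforward $\bfR(p\times\rid_T)_*\cE$: one must verify it is a genuine flat family of sheaves on $\bP^1\times T$ concentrated in degree zero with fiberwise length one, i.e. that the cohomology-sheaf and base-change behaviour of $\cE$ is tame enough along the fibers of $p$. Closely related is the input required in the fine-moduli case, namely that $\cM^\sigma([\cO_x])$ has the expected deformation theory so that its tangent space at $[\cO_x]$ is $\Ext^1(\cO_x,\cO_x)$; this is where one leans on Inaba's construction (Theorem \ref{thm.Inaba.theorem0.2}) and the openness of the stable locus. Once these two points are secured, the remaining work is bookkeeping across the four glued types, organized entirely by the value of $\rper(\sigma)$ via Lemmas \ref{lemmasetofMsigma} and \ref{lemma.Sequivifandonlyifsamefiber}.
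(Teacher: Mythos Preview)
Your proposal is correct and follows essentially the same approach as the paper's own proof: the same case analysis via $\rper(\sigma)$, the same use of Inaba's moduli space with the tangent-space/proper-monomorphism argument for the fine moduli case, and the same pushforward-plus-section corepresentability argument for the $\bP^1$ case. Even the technical concern you flag about $\bfR(p\times\rid_T)_*\cE$ being a flat family of length-one sheaves is handled in the paper at the same level of rigor you anticipate.
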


\bibliographystyle{amsalpha}
\bibliography{reference}

\end{document}